\theoremstyle{plain}
\newtheorem{theorem}{Theorem}[section]
\newtheorem{lemma}[theorem]{Lemma}
\newtheorem{proposition}[theorem]{Proposition}
\newtheorem{corollary}[theorem]{Corollary}
\theoremstyle{definition}
\theoremstyle{remark}
\newtheorem{remark}[theorem]{Remark}
\numberwithin{equation}{section}
\DeclareMathOperator{\grad}{grad}
\newcommand{\CC}{\mathbb{C}}
\newcommand{\RR}{\mathbb{R}}
\renewcommand{\Re}{\operatorname{Re}}
\renewcommand{\Im}{\operatorname{Im}}
\newcommand{\dd}{\mathrm{d}}
\newcommand{\per}{\textnormal{per}}
\newcommand{\loc}{\textnormal{loc}}
\newcommand\ep{\varepsilon}
\renewcommand\t{\theta}
\renewcommand\l{\lambda}
\renewcommand\i{\mathrm i}
\renewcommand\d{\partial}
\DeclareMathOperator{\Dt}{\partial_t \hspace{-1.5pt}}
\renewcommand{\div}{\mathrm{div}}
\renewcommand{\L}{\mathcal{L}}
\renewcommand{\P}{\mathcal{P}}
\renewcommand{\l}{\lambda}
\newcommand{\ZZ}{\mathbb{Z}}
\newcommand{\GT}{\mathcal{G}} 
\newcommand{\RS}{\Gamma_\varepsilon} 
\newcommand{\FL}{\operatorname{\mathcal{L}}_\nu} 
\newcommand{\FT}{\operatorname{F}} 
\newcommand{\pot}{\textnormal{pot}}
\newcommand{\sol}{\textnormal{sol}}
\title{Fibre homogenisation for time-dependent problems}
\author[1]{Shane Cooper}
\author[2]{Imane Essadeq}
\author[3]{Marcus Waurick}
\affil[1]{\footnotesize Department of Mathematics, University College London, 25 Gordon Street, London, WC1H 0AY.}
\affil[2]{\footnotesize Mathematical Modeling and Numerics (MoNum), EHTP - Hassania School of Public Works, Km 7 El Jadida Road, Casablanca.}
\affil[3]{\footnotesize Department of Mathematics and Computer Science, TU Bergakademie Freiberg, Freiberg, Germany}
\newcommand{\leqnomode}{\tagsleft@true\let\veqno\@@leqno}
\newcommand{\reqnomode}{\tagsleft@false\let\veqno\@@eqno}
\begin{document}

\maketitle

\begin{abstract}
In this article we provide a method for establishing operator-type error estimates between solutions to rapidly oscillating evolutionary equations and their homogenised counter parts. This method is exemplified by applications to the wave, heat and finally thermoelastic evolutionary systems. 
\end{abstract}
\section{Introduction}

This article is concerned with the quantitative homogenisation of evolutionary equations (in the sense of Picard) with rapidly oscillating  periodic spatial coefficients
\[
\big( \d_t M(\tfrac{\cdot}{\ep}) +N(\tfrac{\cdot}{\ep}) +A\big) U_\ep = F,
\]
where  $M$  and $N$ are bounded and positive coefficient matrices, the small parameter $\ep$ is the spatial period and $A$ is some first order differential operator. The usual, now classical, `qualitative' homogenisation theory states that the solution $U_\ep$ converges (in some function space), as $\ep$ tends to zero, to $U_0$ the solution to an evolutionary equation with constant coefficients
\[
\big( \d_t M_0  +N_0+A\big) U_\ep = F.
\]
where $M_0$ and $N_0$ are the appropriate homogenisation limits of $M(\tfrac{\cdot}{\ep})$ and $N(\tfrac{\cdot}{\ep})$. The qualitative homogenisation  literature needs no introduction as it is now long standing, rich and ubiquitous. That said,  for an example of qualititative homogenisation of evolutionary systems, see \cite{W13}.  Recent developments, in homogenisation theory, have included establishing error estimates between solutions and their homogenised limits in various function spaces; with a priority focused on establishing so-called `operator-type' estimates; that is, estimates uniform with respect to the right-hand-side is certain natural norms. Such activity has come to be coined `quantitative' homogenisation. 

In \cite{BiSu04}, \cite{Zh89} the so-called spectral method was developed to establish quantitative homogenisation results for differential problems with rapidly oscillating periodic coefficients defined in the whole space. This method relies on utilising the Gelfand transform to replace the differential operator with a family of operators, i.e. to represent the operator as a fibre decomposition, and then performing an asymptotic analysis on each fibre operator that is uniform with respect to the fibre decomposition parameter.
In \cite{CoWa19}, the spectral method was extended to the context of `stationary' evolutionary problems that admitted a fibre decomposition via the Gelfand transform. Here, we developed the spectal method further to cover the case of time-dependent evolutionary problems. The key distinction is that the time-dependent operators  admit a two parameter fibre decomposition, a spatial decomposition (due to the Gelfand transform) and a temporal decomposition (due to the Laplace transform); we shall perform an asymptotic analysis of the decomposed operators that is uniform in the both the Gelfand and Laplace transform to arrive at operator-type homogenisation error estimates in temporally-weighted function spaces. We shall apply this method to the quintessential hyperbolic and parabolic systems, i.e. the wave and heat system respectively, then demonstrate the method on a thermo-elastic system of equations, which is an example of a coupled wave-heat system.

\subsection*{Structure of the article}
In Section \ref{s.w}, we study the wave equation 
\begin{equation*}
	\partial_{t}^2 u_\ep - \div ( a(\tfrac{x}{\ep}) \nabla u_\ep ) = f, \quad t\in \RR ,\, x\in \RR^d,
\end{equation*}
which is reformulated as the evolutionary wave system
 \begin{equation*}
 	\Bigg(\partial_t \left( \begin{matrix} 1 & 0 \\ 0 & a(\tfrac{\cdot}{\ep})^{-1}
 	\end{matrix} \right) + \left( \begin{matrix} 0 & -\div_x \\ -\nabla_x & 0
 	\end{matrix} \right) \Bigg) U^\ep = \left( \begin{matrix} f \\ 0
 	\end{matrix} \right) \in L_{\nu}^2(\RR;L_2(\RR^d)^{1+d}),
 \end{equation*}
for $U^\ep(t,x) = (\Dt u_\ep(t,x), a(\tfrac{x}{\ep})  \nabla u_\ep(t,x))^\top$. By the spectral method, we establish the following estimate
\[
\Big(\|   U^\ep_1 - U^0_1  \|_{L^2_\nu(\RR;L^2(\RR^d))}^2 + 	\| U^\ep_2 - U^0_2 \|_{L^2_\nu(\RR;H^{-1}(\RR^d;\RR^d))}^2\Big)^{1/2}  \le C \ep \| \d_t^2 f\|_{L^2_\nu(\RR;L^2(\RR^d))},
\]
for some constant $C>0$ independent of $\ep$ and $f$. Here, $U_0$ is the solution to the homogenised evolutionary wave system
\begin{equation*}
	\Bigg(\partial_t \left( \begin{matrix} 1 & 0 \\ 0 & a_0^{-1}
	\end{matrix} \right) + \left( \begin{matrix} 0 & -\div_x \\ -\nabla_x & 0
	\end{matrix} \right) \Bigg) U^0 = \left( \begin{matrix} f \\ 0
	\end{matrix} \right) \in L_{\nu}^2(\RR;L_2(\RR^d)^{1+d}),
\end{equation*}
for $a_0$ the homogenised matrix associated to $a$. As $U^0 = (\Dt u_0, a_0 \nabla u_0)^\top$, for $u_0$ the solution to the homogenised wave equation 
\begin{equation*}
	\partial_{t}^2 u_0 - \div ( a_0 \nabla u_0 ) = f, \quad t\in \RR ,\, x\in \RR^d,
\end{equation*} these estimates can be rewritten as 
	\begin{align*}
	\| \Dt u_\ep - \Dt u_0 \|_{L^2_\nu(\RR;L^2(\RR^d))} &\le C \ep \| \Dt^2 f\|_{L^2_\nu(\RR;L^2(\RR^d))}, \\
	\| a(\tfrac{\cdot}{\ep}) \nabla u_\ep  - a_0   \nabla  u_0 \|_{L^2_\nu(\RR;H^{-1}(\RR^d;\RR^d))}& \le C \ep \| \Dt^2 f \|_{L^2_\nu(\RR;L^2(\RR^d))}.
\end{align*}
However, a closer look at our approach shows that we additionally establish a corrector estimate
\[
	\| a(\tfrac{\cdot}{\ep}) \nabla u_\ep  - a(\tfrac{\cdot}{\ep} )\nabla ( u_0 + \ep N_0(\tfrac{\cdot}{\ep}) \cdot  \nabla  u_0)  \|_{L^2_\nu(\RR;L^2(\RR^d;\RR^d))} \le C \ep \| \Dt^2 f \|_{L^2_\nu(\RR;L^2(\RR^d))},\\
\]
for some constant $C>0$ independent of $\ep$ and $f$; here, $N_0$ is the classical homogenised corrector. 

In Section \ref{s.h},  we study the heat equation 
\begin{equation*}
	\partial_{t} u_\ep - \div_x ( b(\tfrac{x}{\ep}) \nabla_x u_\ep ) = f, \quad t\in \RR ,\, x\in \RR^d,
\end{equation*}
which is reformulated as the evolutionary heat system
\begin{equation*}
	\Bigg( \left( \begin{matrix} \Dt & 0 \\ 0 & b^{-1}(\tfrac{x}{\ep})
	\end{matrix} \right) + \left( \begin{matrix} 0 & -\div_x \\ -\nabla_x & 0
	\end{matrix} \right) \Bigg) U^\ep = \left( \begin{matrix} f \\ 0
	\end{matrix} \right) \quad t\in \RR,\, x\in \RR^d.
\end{equation*}
for $U^\ep(t,x) = (u_\ep(t,x), b(\tfrac{x}{\ep})  \nabla u_\ep(t,x))^\top$. We establish the following estimate
	\[
	\Big(\|   U^\ep_1 - U^0_1  \|_{L^2_\nu(\RR;L^2(\RR^d))}^2 + 	\| U^\ep_2 - U^0_2 \|_{L^2_\nu(\RR;H^{-1}(\RR^d;\RR^d))}^2\Big)^{1/2}  \le C \ep \| f\|_{L^2_\nu(\RR;L^2(\RR^d))},
	\]
	for $U^0$ the solution to the homogenised evolutionary heat system
	\begin{equation*}
		\Bigg(\left( \begin{matrix} \partial_{t} & 0 \\ 0 & b_0^{-1}
		\end{matrix} \right) + \left( \begin{matrix} 0 & -\div_x \\ -\nabla_x & 0
		\end{matrix} \right) \Bigg) U^0 = \left( \begin{matrix} f \\ 0
		\end{matrix} \right) \quad t\in \RR,\, x\in \RR^d,
	\end{equation*}
	for $b_0$ the homogenised matrix associated to $b$. 	Again, as $U^0 = (u_0, b_0 \nabla u_0)^\top$, these estimates can be rewrriten as 
	\begin{align*}
		\| \Dt^{1/2} u_\ep - \Dt^{1/2}  u_0 \|_{L^2_\nu(\RR;L^2(\RR^d))} \le C \ep \| f\|_{L^2_\nu(\RR;L^2(\RR^d))}, \\
	\| b(\tfrac{\cdot}{\ep}) \nabla u_\ep  - b_0   \nabla  u_0 \|_{L^2_\nu(\RR;H^{-1}(\RR^d;\RR^d))} \le C \ep \| f \|_{L^2_\nu(\RR;L^2(\RR^d))}, 
\end{align*}
for $u_0$ the solution to the homogenised heat equation 
\begin{equation*}
	\partial_{t} u_0 - \div ( b_0 \nabla u_0 ) = f, \quad t\in \RR ,\, x\in \RR^d.
\end{equation*}
 As in Section 2, we additionally have the corrector estimate
 \[
 	\| b(\tfrac{\cdot}{\ep}) \nabla u_\ep  - b(\tfrac{\cdot}{\ep} )\nabla ( u_0 + \ep N_0(\tfrac{\cdot}{\ep}) \cdot  \nabla  u_0)  \|_{L^2_\nu(\RR;L^2(\RR^d;\RR^d))} \le C \ep \|  f \|_{L^2_\nu(\RR;L^2(\RR^d))}.
 	\]
 Upon comparing these estimates with the wave equation error estimates, we see an improvement of the upper bound in the sense that one has a rougher norm for $f$ with respect to the time variable. This is due to the  existence of maximal regularity estimates  for the evolutionary heat equation (cf. Lemma \ref{p.reg}) which are utilised when quantitatively justifying the fibre homogenisation approach. 

Finally, in Section \ref{s.te}, we study a coupled wave-heat type equation; namely the thermo-eslastic system of equations
\begin{equation*}
	\left\{ \begin{aligned}
		\partial_{t}^2 u_\ep - \div_x \big( a(\tfrac{x}{\ep}) \nabla_x u_\ep \big)+ \gamma(\tfrac{x}{\ep}) v_\ep = f, \qquad t\in \RR, \, x \in \RR^d ,\\
		\d_t v_\ep - \div_x\big( b(\tfrac{x}{\ep}) \nabla_x v_\ep \big) - \overline{\gamma}(\tfrac{x}{\ep}) \d_t u_\ep = g,\qquad t\in \RR, \, x \in \RR^d.
	\end{aligned} \right.
\end{equation*}
Here, we rewrite the thermo-elastic system as the following evolutionary system
\begin{equation}\label{EvoP}
	(\partial_t M_\ep+N_\ep+A)U^\ep = F,  
\end{equation} 
where $U^\ep=(\partial_t u_\ep, a(\tfrac{x}{\ep})\nabla_x u_\ep, v_\ep,b(\tfrac{x}{\ep})\nabla_x v_\ep)^\top$, $F = (f,0,g,0)^\top$,
\begin{multline*}
	M_\ep= \begin{pmatrix}
		1 & 0 & 0 & 0 \\ 0 & a^{-1}(\tfrac{x}{\ep}) & 0 & 0 \\ 0 & 0 & 1 & 0 \\ 0 & 0 & 0 &0 
	\end{pmatrix}, \  N_\ep = \begin{pmatrix}
		0 & 0 & \gamma(\tfrac{x}{\ep})  & 0 \\ 0 & 0 & 0 & 0 \\ -\overline{\gamma}(\tfrac{x}{\ep}) & 0 & 0 & 0 \\ 0 & 0 & 0 &b^{-1}(\tfrac{x}{\ep})
	\end{pmatrix} \text{ and} \\  A = \begin{pmatrix}
		0 & -\div_x  & 0 &0 \\  -\nabla_x  & 0 &  0 &  0 \\ 0 & 0 & 0 & -\div_x \\ 0 & 0 & -\nabla_x &0 
	\end{pmatrix}.
\end{multline*}
Utilising the results of Section \ref{s.w} and \ref{s.h}, we  establish evolutionary system homogenisation estimates and corrector estimates, which we combine as follows: 
	\begin{align*}
	&\notag	\| \Dt u_\ep - \Dt u_0 \|_{L^2_\nu(\RR;L^2(\RR^d))} + 	\| \Dt^{1/2} v_\ep - \Dt^{1/2} v_0 \|_{L^2_\nu(\RR;L^2(\RR^d))} 	\\&+	\| a(\tfrac{\cdot}{\ep}) \nabla u_\ep  - a(\tfrac{\cdot}{\ep} )\nabla ( u_0 + \ep N_0(\tfrac{\cdot}{\ep}) \cdot  \nabla  u_0)  \|_{L^2_\nu(\RR;L^2(\RR^d;\RR^d))} \nonumber \\
	&	+ 	\| b(\tfrac{\cdot}{\ep}) \nabla v_\ep  -b(\tfrac{\cdot}{\ep} )\nabla ( v_0 + \ep N_0(\tfrac{\cdot}{\ep}) \cdot  \nabla  v_0)  \|_{L^2_\nu(\RR;L^2(\RR^d;\RR^d))} 
	+	\| a(\tfrac{\cdot}{\ep}) \nabla u_\ep  - a_0   \nabla  u_0 \|_{L^2_\nu(\RR;H^{-1}(\RR^d;\RR^d))}  \nonumber\\
	&+ 	\| b(\tfrac{\cdot}{\ep}) \nabla v_\ep  - b_0   \nabla  v_0 \|_{L^2_\nu(\RR;H^{-1}(\RR^d;\RR^d))}  \le C \ep \big( \| \Dt^2 f \|_{L^2_\nu(\RR;L^2(\RR^d))} +  \| \Dt  g\|_{L^2_\nu(\RR;L^2(\RR^d))} \big), 
\end{align*}
for some constant $C>0$ independent of $\ep$ and $f$, and $u_0$, $v_0$ solving the coupled homogenised thermo-elastic system
\begin{equation*}
	\left\{ \begin{aligned}
		\partial_{t}^2 u_0 - \div_x a_0 \nabla_x u_0 + \langle \gamma \rangle v_0 = f, \qquad t\in \RR, \, x \in \RR^d ,\\
		\d_t v_0 - \div_x b_0 \nabla_x v_0  - \overline{\langle \gamma \rangle}\Dt u_0 = g,\qquad t\in \RR, \, x \in \RR^d,
	\end{aligned} \right.
\end{equation*}
for $\langle h \rangle$ denoting the integral of a periodic function $h$ over its period.  
\section*{Notation}
For a given Hilbert space $H$ with inner product $\langle \cdot, \cdot \rangle_H$ and norm $\| \cdot \|_H : = \langle \cdot,\cdot\rangle^{1/2}$, we denote $L(H)$ to be the space of bounded linear operators in $H$,  
\[
L^2_{\nu}(\mathbb{R};H)\coloneqq \{ f\in L^2_{\loc}(\mathbb{R};H); \int_{\mathbb{R}} \|f(t)\|_H^2 \exp(-2\nu t)\dd t<\infty\}, \quad \nu>0,
\]
and
 $H_\nu^1(\mathbb{R};H)$ is the space of $H$-valued, weakly differentiable $L_{2,\nu}(\mathbb{R};H)$-functions with distributional derivative representable as an $L^2_{\nu}(\mathbb{R};H)$-function. $L^2_\nu(\RR;H)$ and $H^1_\nu(\RR;H)$ are Hilbert spaces with the standard inner products. For a given function space $X$, $X_{\per}$ denotes the space of periodic functions that locally belong to $X$, for example: 
 $W^{1,p}_{\per}(\square) : = \{ f \in W^{1,p}_{\loc}(\RR^d) \, : \, \text{$f$ is periodic with periodicity cell $\square$}  \}$.
 
 Throughout $\lesssim$ denotes less than or equal to up to some constant that is independent of the underlying parameters.

\section{Wave equation}\label{s.w}
Consider the heterogeneous wave equation with rapidly oscillating coefficients:
\begin{equation}\label{w1}
\partial_{t}^2 u_\ep - \div_x ( a(\tfrac{x}{\ep}) \nabla_x u_\ep ) = f, \quad t\in \RR ,\, x\in \RR^d,
\end{equation}
where $\ep>0$ is a fixed parameter (the small period), $a$ is a periodic function with periodicity cell $\square : = [-\tfrac{1}{2}, \tfrac{1}{2})^d$, $f\in L_\nu^2(\RR;L_2(\RR^d))$ is given for some $\nu>0$ and $u_\ep$ is the unknown; $\div$ and $\nabla$ are the usual divergence and gradient differential operators (with respect to the spatial variable). 

We shall determine the leading-order asymptotics of $u^\ep$  by using an appropriate modification, given in \cite{CoWa19}, of the so-called spectral method (c.f. \cite{Zh89,BiSu04}) to `evolutionary equations' of Picard, see \cite{P09} for the seminal research article and \cite{STW22} for an (easily) accessible monograph on the subject matter.

We begin by following \cite[pp 92]{STW22} and noting that if $u_\ep$ satisfies \eqref{w1} then the vector $U^\ep(t,x) = (\Dt u_\ep(t,x), a(\tfrac{x}{\ep})  \nabla u_\ep(t,x))^\top$ solves the evolutionary wave system   
\begin{equation}
	\label{ws1}
	\Bigg(\partial_t \left( \begin{matrix} 1 & 0 \\ 0 & a(\tfrac{\cdot}{\ep})^{-1}
	\end{matrix} \right) + \left( \begin{matrix} 0 & -\div_x \\ -\nabla_x & 0
	\end{matrix} \right) \Bigg) U^\ep = \left( \begin{matrix} f \\ 0
	\end{matrix} \right) \in L_{\nu}^2(\RR;L_2(\RR^d)^{1+d}).
	\end{equation}
Conversely, if $U^\ep$ solves \eqref{ws1} then $u_\ep : =\Dt^{-1} U^\ep_1 \in H_\nu^1(\RR;L^2(\RR^d))$ solves the wave equation \eqref{w1} in a distributional sense.

Considering solutions of \eqref{ws1}, in the Bochner space $L^2_\nu(\RR; L^2(\RR^d))$, for $\nu>0$,  with time $t$ being the Bochner variable, we apply Picard's theorem as in \cite[Theorem 6.2.6]{STW22} and obtain the following result. 

\begin{proposition}\label{prop:PTwave} Let $\nu>0$, $f\in L_\nu^2(\RR;L^2(\RR^d))$. Assume $a\in L^\infty(\mathbb{\square};L(\mathbb{C}^d))$ with $a=a^*$ pointwise a.e.~and that there exists $\kappa>0$ such that
\[
   a(y) \xi \cdot \overline{\xi} \ge \kappa |\xi|^2, \quad( \xi \in \RR^d,  y \in \square)
\]
Then there exists a unique solution $U^\ep \in L_\nu^2(\RR;L^2(\RR^{d}))^{1+d}$ of \eqref{ws1}. If, in addition, $f\in H_\nu^1(\RR;L^2(\RR^{d}))$, then
\[
U^\ep \in H_\nu^1(\RR;L^2(\RR^{d}))^{d+1} \cap L^2_\nu(\RR; {\rm dom}\,  \left( \begin{matrix} 0 & -\div \\ -\nabla & 0
\end{matrix} \right)).
\]
\end{proposition}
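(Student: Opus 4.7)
The plan is to invoke Picard's theorem for evolutionary equations \cite[Theorem 6.2.6]{STW22}, which reduces the proposition to checking three structural hypotheses on the operators appearing in \eqref{ws1}. Set $H \coloneqq L^2(\RR^d)^{1+d}$, write $M_\ep$ for the bounded multiplication operator on $H$ given by the block-diagonal matrix with blocks $1$ and $a(\cdot/\ep)^{-1}$, and $A$ for the off-diagonal differential operator from \eqref{ws1} endowed with its natural domain $\dom(A) = H^1(\RR^d) \times H(\div,\RR^d) \subseteq H$. The three items to verify are: (i) $A$ is skew-selfadjoint on $H$; (ii) $M_\ep$ is bounded and selfadjoint on $H$; (iii) a uniform positivity condition
\[
\Re \langle \nu M_\ep V, V\rangle_H \geq c_0 \|V\|_H^2 \quad (V \in H)
\]
holds with some $c_0>0$, so that the Picard positivity requirement (with vanishing zeroth-order term) is met.

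For (i), skew-selfadjointness of $A$ is the standard statement that on the whole space $\nabla$ with domain $H^1(\RR^d)$ and $-\div$ with domain $H(\div,\RR^d)$ are mutual adjoints in $L^2(\RR^d)$. For (ii), selfadjointness of $M_\ep$ is immediate from $a=a^*$ pointwise a.e., and boundedness follows from $a \geq \kappa I$, which yields $\|a^{-1}\|_{L^\infty} \leq \kappa^{-1}$. For (iii), I would use the essential upper bound $a \leq \|a\|_{L^\infty} I$ a.e., which forces $a^{-1} \geq \|a\|_{L^\infty}^{-1} I$ a.e.\ and hence $M_\ep \geq c_0 I$ with $c_0 \coloneqq \min\bigl(1, \|a\|_{L^\infty}^{-1}\bigr)>0$. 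Since the zeroth-order part of the material law vanishes here, the resulting bound $\nu\langle M_\ep V, V\rangle \geq \nu c_0 \|V\|^2$ is exactly the Picard positivity condition and it holds for every $\nu>0$. Applying the theorem produces the unique $U^\ep \in L^2_\nu(\RR;H)$ solving \eqref{ws1}.

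For the regularity improvement, I would exploit time-translation invariance. Since both $M_\ep$ and $A$ act only in the spatial variable, they commute with $\partial_t$ on the relevant lifted domains in $L^2_\nu(\RR;H)$. Assuming $f \in H^1_\nu(\RR;L^2(\RR^d))$ gives $\partial_t F \in L^2_\nu(\RR;H)$, and formally differentiating \eqref{ws1} in $t$ shows that $\partial_t U^\ep$ solves the same evolutionary system with right-hand side $\partial_t F$. A second application of Picard's theorem, together with the uniqueness clause already established, then yields $\partial_t U^\ep \in L^2_\nu(\RR;H)$, i.e.\ $U^\ep \in H^1_\nu(\RR;H)$. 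Rearranging \eqref{ws1} gives $A U^\ep = F - M_\ep \partial_t U^\ep \in L^2_\nu(\RR;H)$, whence $U^\ep \in L^2_\nu(\RR; \dom(A))$.

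The only delicate point I anticipate is making the commutation of the time derivative with the lifted spatial operators $A$ and $M_\ep$ genuinely rigorous in the Bochner setting, but this is entirely routine within the Picard framework and is dispatched by the standard density and closedness arguments for time-autonomous coefficients; I do not foresee any substantive obstacle beyond bookkeeping.
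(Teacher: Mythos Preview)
Your proposal is correct and follows the same route as the paper: the paper does not give a separate proof but simply invokes Picard's theorem \cite[Theorem 6.2.6]{STW22}, which is precisely what you do, with the hypothesis checks (skew-selfadjointness of $A$, boundedness and selfadjointness of $M_\ep$, and uniform strict positivity of $M_\ep$) carried out correctly. The regularity upgrade you sketch via time-translation invariance is the standard argument built into the Picard framework (indeed it is part of the cited theorem), so there is nothing missing.
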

\begin{remark}\label{rem:coerc} It is clear that (cf. \cite[Proposition 6.2.3 (b)]{STW22}), $a\in L^\infty(\mathbb{\square};L(\mathbb{C}^d))$ with 
\[
  \Re a(y) \xi \cdot \overline{\xi} \ge \kappa |\xi|^2, \quad (\xi \in \RR^d, y \in \square)
\]
for some $\kappa>0$ is equivalent to
\begin{equation}\label{coeffass}
	a^{-1}\in L^\infty(\mathbb{\square};L(\mathbb{C}^d))\text{ and }\exists\kappa >0\forall\xi \in \RR^d, \,  y \in \square\colon \Re a^{-1}(y) \xi \cdot \overline{\xi }\ge \kappa |\xi|^2.
\end{equation} 
\end{remark}

To apply the spectral method for determining the quantitative asymptotics of $U^\ep$ we apply a series of invertible transformations.
First, an application of the (bilateral) Laplace transform
\[\FL: L^2_{\nu}(\mathbb{R};L^2(\RR^d;\CC\times \CC^{d}))\to L^2(\mathbb{R};L^2(\RR^d;\CC\times \CC^{d}))\] given as the (unitary) continuous extension of the operator defined by
\[
( \FL \phi)(k)\coloneqq 
\frac{1}{\sqrt{2\pi}}\int_{\mathbb{R}} \phi(t) e^{-\lambda t}\dd t,\qquad\phi\in C_c(\mathbb{R};L^2(\RR^d;\CC\times \CC^{d})), \lambda = \nu + \i k,  k\in \mathbb{R},
\]
and the fact (cf. \cite[Theorem 5.2.3]{STW22}) that
\begin{equation}
	\label{tp1}
	(\FL \Dt g)(k)= \l (\FL g)(k)\quad(g \in H_\nu^1(\RR;L^2(\RR^d)^{d+1}), \lambda = \nu+\i k, k \in\RR)
\end{equation}	
determines that $U^\ep_{\lambda} := \L_\nu U^\ep(k)$, $\l=\nu+\i k$,  solves 
\begin{equation*}
	\Bigg( \l  \left( \begin{matrix} 1 & 0 \\ 0 & a(\tfrac{\cdot}{\varepsilon})^{-1}
	\end{matrix} \right) + \left( \begin{matrix} 0 & -\div \\ -\nabla & 0
	\end{matrix} \right) \Bigg) U^\ep_{\l}= \left( \begin{matrix} \L _\nu f(k) \\ 0
	\end{matrix} \right),\qquad \l =\nu+ \i k, k \in \RR, \ep >0.
	\end{equation*}

	Then, upon an application of the (spatial) re-scaling 
	\[
	\RS : L^2(\RR^d;\CC \times \CC^{d}) \rightarrow L^2(\RR^d;\CC\times \CC^{d}),
	\] given by the action $\RS u(x) := u(\ep x)$, and using the fact that
	\begin{equation}
		\label{tp3} \nabla \RS = \ep \RS \nabla,
	\end{equation}
	 we find that $W^\ep_{\l} : = \RS U^\ep_{\l}$ solves 
	\begin{equation*}
\Bigg( \l  \left( \begin{matrix} 1 & 0 \\ 0 & a^{-1}
\end{matrix} \right) + \ep^{-1}\left( \begin{matrix} 0 & -\div \\ -\nabla & 0
\end{matrix} \right) \Bigg) W^\ep_{\l}= \left( \begin{matrix} \RS \L_\nu f(k) \\ 0
\end{matrix} \right),\qquad \l =\nu+ \i k, k \in \RR, \ep >0.
\end{equation*} 

Finally, upon an  application of the Gelfand transform 
\[\GT : L^2(\RR^d;\CC \times \CC^d)\rightarrow L^2(\square^* \times \square ; \CC \times \CC^d),\quad \square^* :=[-\pi,\pi)^d,\] given as the continuous extension of 
\[
(\GT \phi)(\t,y)\coloneqq (2\pi)^{-d/2} \sum_{z \in \ZZ^d } \phi(y+z) e^{-\i \t \cdot(y + z)}, \quad \phi \in C_c^\infty(\RR^d;\CC \times \CC^d),
\] with its inverse given by
\[
  (\GT^{-1} \Phi)(x) =\frac{1}{(2\pi)^{d/2}} \int_{\square^*} \Phi(\t,x)e^{\i \t\cdot x}\dd \t\quad(x\in \RR^d),
\]
it follows from the properties
\begin{align}
	\label{tp2} &\GT \nabla  = (\nabla_y + \i \t) \GT ;\\
	\label{tp4} &\GT p = p \GT, \ \text{where $p$ is multiplication by a periodic function}. 
\end{align}
 that $U^\ep_{\l,\t} : = (\GT W^\ep_{\l})(\t,\cdot)$
is the $\square$-periodic solution to
\begin{equation}\label{waveP1}
\Bigg( \l  \left( \begin{matrix} 1 & 0 \\ 0 & a^{-1}
\end{matrix} \right) + \ep^{-1} \left( \begin{matrix} 0 & -(\div + \i \t \cdot) \\ -(\nabla + \i \t) & 0
\end{matrix} \right) \Bigg) U^\ep_{\l,\t}= \left( \begin{matrix} F \\ 0
\end{matrix} \right),
\end{equation}
for $F : =  (\GT  \RS \L_\nu f(k))(\t,\cdot) \in L^2_{\per}(\square; \CC \times \CC^{d})$.\footnote{Whilst $F$ formally depends on $\ep$, $\l$ and $\t$, the right-hand-side will be fixed throughout and for this reason we drop explicit reference to this dependence.} Here $\i \t $ and $\i \t \cdot$ are understood as (constant) multiplication operators. 

To identify the $\ep$-leading-order asymptotics for $U^\ep_{\l,\t}$, with error estimates that are uniform in $\t,\l$ and $f$, we build upon the method of \cite{CoWa19}, in Section \ref{s.wass}. For this, we introduce $N_0$, the so-called \textit{homogenised corrector}, whose $j$-th component is the unique solution $N^j_0 \in H^1_{\per,0}(\square): =\{ p \in H^1_{\per}(\square) \, | \, {\langle p,1\rangle_{L^2(\square)}=0 \}}$ to  \begin{equation}\label{thetacell}
	\langle a\nabla N^j_0 ,\nabla \phi \rangle_{L^2(\square)} = -\langle a e_j ,\nabla \phi \rangle_{L^2(\square)}, \quad ( \phi \in H^1_{\per}(\square)),
\end{equation}	
	for $e_j$ the $j$-th Euclidean basis vector. Moreover, let the matrix $a_0$ be the \textit{homogenised matrix corresponding to} $a$, that is, $a_0$ is given by 
	\[(a_0)_{jk}  = \langle a( e_j + \nabla N_0^j), e_k \rangle_{L^2(\square)}.\] The main result for the wave equation reads as follows.

\begin{theorem}\label{WaveMainthm}
	Let $\ep>0, \t \in \square^* , \l=\nu+ik, k\in \RR$,  and $F\in L^2_{\per}(\square; \CC \times \CC^{d})$. Consider the solution $U^\ep_{\l,\t}$ to \eqref{waveP1}  and 
	\begin{equation}\label{Wrep}
		W_{\l,\xi} =   c_0(\l,\xi) \left( \begin{matrix}
			\l  \\  a(I+ \nabla N_0 )	\i \xi
		\end{matrix} \right), \quad c_0(\l,\xi) := (\l^2 + a_0 \xi \cdot \xi )^{-1} \langle F,1 \rangle_{L^2(\square)}, \quad \xi \in \RR^d.
	\end{equation}
	
	Then, there exists $C>0$ uniformly in $\ep, \l, \t$ and $F$ such that
	\begin{equation}
		\label{W.errest1}
		\| U^\ep_{\l,\t} - W_{\l,\t/\ep}  \|_{L^2(\square; \CC^{d+1})} \le C \ep | \l |^2 \| F\|_{L^2(\square)}.
	\end{equation}
\end{theorem}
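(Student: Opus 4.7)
My plan is to reduce \eqref{waveP1} to a scalar elliptic fibre equation via the second row, then apply Bloch-wave analysis with a uniform spectral gap to isolate the ground mode. Solving row 2 for $U^\ep_{\l,\t,2} = (\ep\l)^{-1}a(\nabla+\i\t)U^\ep_{\l,\t,1}$ and substituting into row 1 yields the scalar equation
\[
  (\l^2 + \ep^{-2}\mathcal{A}_\t)\, U^\ep_{\l,\t,1} = \l F, \qquad \mathcal{A}_\t := -(\div+\i\t\cdot)\,a\,(\nabla+\i\t),
\]
with $\mathcal{A}_\t$ self-adjoint non-negative on $L^2_{\per}(\square)$. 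The Bloch eigenpairs $(\mu_n(\t),\psi_n(\t,\cdot))$ of $\mathcal{A}_\t$ satisfy, uniformly in $\t\in\square^*$, a uniform gap $\mu_1(\t)\ge\mu_*>0$ and analytic expansions $\mu_0(\t)=a_0\t\cdot\t+O(|\t|^4)$, $\psi_0(\t,y)=1+\i\t\cdot N_0(y)+O(|\t|^2)$ in $H^1_{\per}(\square)$, with $N_0$ solving \eqref{thetacell}.

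Decomposing $U^\ep_{\l,\t,1}$ into its $\psi_0$-component and the orthogonal complement, the latter is bounded in $L^2$ by $C\ep^2|\l|\|F\|$ thanks to the gap. The $\psi_0$-projection equals $(\l^2+\ep^{-2}\mu_0(\t))^{-1}\l\langle F,\psi_0\rangle\psi_0/\|\psi_0\|^2$; replacing $\mu_0(\t)$ by $\ep^2 a_0\xi\cdot\xi$, $\psi_0$ by $1$, and $\langle F,\psi_0\rangle/\|\psi_0\|^2$ by $\langle F,1\rangle$ produces $W_1 = c_0\l$ with a remainder of order $\ep|c_0||\l|^2|\xi|$. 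The numerical inequality
\[
  |\l^2+a_0\xi\cdot\xi|^2 \ge \nu^2\bigl(|\l|^2+\kappa_0|\xi|^2\bigr), \qquad \l = \nu+\i k,
\]
proved by expanding squares and using $a_0\xi\cdot\xi\ge\kappa_0|\xi|^2$, gives $|c_0||\l||\xi|\le C|\l|\|F\|/\nu$, so the first-component error is $O(\ep|\l|^2\|F\|)$.

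For the second component, introduce the intermediate ansatz $\tilde W_1 := c_0\l(1+\i\t\cdot N_0)$ and $\tilde W_2 := (\ep\l)^{-1}a(\nabla+\i\t)\tilde W_1$, which by construction satisfies row 2 of \eqref{waveP1} exactly. Direct computation gives $\tilde W_2 - W_2 = -\ep c_0 a\xi(\xi\cdot N_0)$, controlled in $L^2(\square)$ by the same inequality as $O(\ep|\l|^2\|F\|)$; and $U^\ep_{\l,\t,2} - \tilde W_2 = (\ep\l)^{-1}a(\nabla+\i\t)(U^\ep_{\l,\t,1}-\tilde W_1)$ is bounded via the scalar energy identity, which translates the $L^2$-bound on $U^\ep_{\l,\t,1}-\tilde W_1$ into the required bound on $\ep^{-1}\|(\nabla+\i\t)(U^\ep_{\l,\t,1}-\tilde W_1)\|_{L^2}$. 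A triangle inequality on the first and second components then yields the claim.

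\emph{Main obstacle.} The uniform-in-$\t$ spectral gap and Bloch expansions near $\t = 0$ (where the ground band degenerates), combined with the sharp numerical inequality above that couples the homogenisation parameter $\ep$ to the temporal regularity $|\l|^2$ in such a way that all prefactor errors combine into the single $O(\ep|\l|^2\|F\|)$ bound. In particular, the ``low-Bloch'' and ``high-Bloch'' contributions have to be combined carefully to avoid spurious $|\l|$ or $|\xi|$ losses.
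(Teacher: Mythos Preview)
Your approach is genuinely different from the paper's. The paper never reduces to the scalar operator $\mathcal{A}_\t$; instead it works at the first-order system level, using the reducing subspace $E_\t\subset L^2(\square;\CC\times\CC^d)$ for the skew-adjoint block $A_\t$ (Proposition~\ref{prop.diagA}). It first shows $\|U^\ep_{\l,\t}-V^\ep_{\l,\t}\|\le C\ep|\l|^2\|F\|$ for the $E_\t$-projected solution $V^\ep_{\l,\t}$ (Proposition~\ref{prop.W1stest}), then gives an explicit formula for $V^\ep_{\l,\t}$ in terms of the $\t$-dependent corrector $N_\t$ and matrix $a_\t$ (Proposition~\ref{Vrepresentation}), and finally compares $V^\ep_{\l,\t}$ to $W_{\l,\t/\ep}$ using Lipschitz bounds $\|N_\t-N_0\|_{H^1}\lesssim|\t|$, $|a_\t-a_0|\lesssim|\t|$ (Proposition~\ref{Ntasym}). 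Your Bloch-eigenpair route is the classical Birman--Suslina mechanism transported through the Laplace transform; it uses the self-adjointness of $\mathcal{A}_\t$ more directly, whereas the paper's system-level decomposition is designed to generalise to settings (like Section~\ref{s.h}) where the scalar reduction is less convenient.

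That said, your sketch has a genuine gap in the resolvent bound. The inequality $|\l^2+a_0\xi\cdot\xi|^2\ge\nu^2(|\l|^2+\kappa_0|\xi|^2)$ is correct, but it only yields $|c_0|\lesssim(|\l|^2+|\xi|^2)^{-1/2}\|F\|$, and hence $|c_0|\,|\xi|^2\lesssim|\xi|\,\|F\|$, which is \emph{unbounded} as $|\xi|=|\t|/\ep\to\infty$. This is exactly the quantity controlling $\|\tilde W_2-W_2\|\lesssim\ep|c_0|\,|\xi|^2$, so your stated inequality does not close the second-component estimate. What you need is the sharper bound $d(t):=t/|\l^2+t|\le |\l|/\nu$ for all $t\ge0$, which the paper obtains by maximising $d$ directly (Proposition~\ref{p.VW}): for $\Re\l^2>0$ one has $d\le1\le|\l|/\nu$, and for $\Re\l^2<0$ the maximum is $|\l|^2/|\Im\l^2|\le|\l|/\nu$. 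With this, $|c_0|\,|\xi|^2\lesssim|\l|\|F\|$ and your $\tilde W_2-W_2$ term is $O(\ep|\l|\|F\|)\subset O(\ep|\l|^2\|F\|)$. The same issue affects the replacement $\mu_0(\t)\to a_0\t\cdot\t$ in the ground-mode coefficient, where the error involves $|c_0|\,|\xi|^3/|\l^2+\ep^{-2}\mu_0(\t)|$ and again requires $d(t)\lesssim|\l|$ rather than your square-root bound.

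A smaller point: the step ``$U^\ep_{\l,\t,2}-\tilde W_2=(\ep\l)^{-1}a(\nabla+\i\t)(U^\ep_{\l,\t,1}-\tilde W_1)$ bounded via the scalar energy identity'' is too compressed to stand on its own. An $L^2$ bound on $U_1-\tilde W_1$ does not automatically give the $H^1$ bound you need for $U_2-\tilde W_2$; you must write down the residual equation $(\l^2+\ep^{-2}\mathcal{A}_\t)(U_1-\tilde W_1)=r$, identify $r$ explicitly, and test against $\overline{U_1-\tilde W_1}$ to extract $\ep^{-2}\langle\mathcal{A}_\t(U_1-\tilde W_1),U_1-\tilde W_1\rangle$ with a controlled right-hand side. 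Also, the expansion $\mu_0(\t)=a_0\t\cdot\t+O(|\t|^4)$ is generally only $O(|\t|^3)$ without extra symmetry assumptions on $a$; fortunately $O(|\t|^3)$ suffices.
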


This theorem allows us to invert the transformations, applied above, to find  error estimates between the solutions to inhomogeneous and homogenised equations.   Indeed, by the unitary properties of  $\FL$, $\GT$, the fact $\Gamma^\ep$ is a change of variables, \eqref{tp1} and  \eqref{W.errest1}  implies
\begin{equation}\label{6.7.23e1}
	\| U^\ep - W^{\ep}  \|_{L^2_\nu(\RR;L^2(\square;\CC\times \CC^{d}))} \le C \ep \| \Dt^2 f\|_{L^2_\nu(\RR;L^2(\RR^d))}
\end{equation}
for the same $C$ as in \eqref{W.errest1} and $W^\ep : = \FL^{-1} \RS^{-1} \GT^{-1} W_{\l, \t / \ep}$.  Now by \eqref{tp1}, it is clear that 
\[
W^\ep_1 = \FL^{-1} \RS^{-1} \GT^{-1} \big(\lambda c_0(\lambda,\tfrac{\theta}{\ep}) \big) = \Dt \mathbf{W} \qquad \text{where $\mathbf{W} : = \FL^{-1}\RS^{-1} \GT^{-1}  c_0$}.
\]
We emphasise that $\RS$ and $\GT$ are transformations with respect to the \emph{spatial} variables, whereas $\FL$ acts on the \emph{temporal} variable and $\FL^{-1}$ on the \emph{frequency} variable $\lambda = \nu + \i k$, $k\in \RR$.
Let us determine 
\[W^\ep_2 = \FL^{-1} \RS^{-1} \GT^{-1}\Big(a(y)(I+ \nabla N_0(y)) \i \tfrac{\t}{\ep} c_{0}(\lambda, \tfrac{\theta}{\ep})\Big) \] in terms of $\mathbf{W}$; by \eqref{tp4}, \eqref{tp2} and the fact that $\nabla_y c_{0}=0$ one has  
 \[
\GT^{-1}\Big( a(y)(I+ \nabla N_0(y)) \i \tfrac{\t}{\ep} c_{0}(\lambda,\tfrac{\t}{\ep})\Big) = a(I + \nabla N_0) \ep^{-1}\nabla \GT^{-1} c_{0};
\] 
and then, by \eqref{tp3}, we get
\[
\Gamma^{-1}_\ep(a(I+ \nabla N_0) \ep^{-1}  \nabla \GT^{-1} c_{0} = a(\tfrac{\cdot}{\ep})(I + \nabla N_0(\tfrac{\cdot}{\ep})) \nabla (\RS^{-1} \GT^{-1} c_{0}).
\]
That is,  
\[
W^\ep_2 = a(\tfrac{x}{\ep})(I + \nabla N_0(\tfrac{x}{\ep})) \nabla \mathbf{W}.
\]
Therefore, $w^\ep = (\Dt \mathbf{W},  a(\tfrac{x}{\ep})(I + \nabla N_0(\tfrac{x}{\ep})) \nabla \mathbf{W})^\top$ and in the spirit of homogenisation theory, it is instructive to determine the equation that $\mathbf{W}$ solves.  For this, we shall use the following known (and readily verifiable) relations between the  Gelfand transform, $\GT$, and Fourier transform $\FT$:
\begin{equation}\label{GvsF}
	\FT h = \langle \GT h, 1 \rangle_{L^2(\square)},
	\end{equation}
and
	\begin{equation}\label{GvsF2}
\text{for $g\in L^2(\square^*)$, i.e. independent of $y$, then	$\GT^{-1} g = \FT^{-1}( \chi g)$, }
	\end{equation} 
	where $\chi$ is the characteristic function of $\square^*$.
%
With these properties, we revisit  \eqref{Wrep} to establish the following:

As $c_{0}$ satisfies
\[
\l^2 c_{0}(\lambda,\tfrac{\t}{\ep}) + a_0 \tfrac{\t}{\ep} \cdot  \tfrac{\t}{\ep} c_{0}(\l,\tfrac{\t}{\ep}) = \int_\square F(\t,y) \, \mathrm{d}y, \quad F = \GT \RS \FL f, 
\]
then $\mathbf{W}= \FL^{-1}\RS^{-1} \GT^{-1}  c_{0}$ satisfies
\begin{equation}\label{W1eqn}
	\Dt^2 \mathbf{W}  - \div\,  a_0 \nabla \mathbf{W} = \mathcal{P}_\ep f, \quad \mathcal{P}_\ep : = \FT^{-1} \chi \RS^{-1} \FT \RS
\end{equation}
Indeed, the left-hand-side used \eqref{tp1}, \eqref{tp2} with $\nabla_y c_0=0$ and \eqref{tp3}; the right-hand-side used \eqref{GvsF} and then \eqref{GvsF2}. 
\begin{remark}
 Direct calculation shows  $\mathcal{P}_\ep =  \FT^{-1} (\RS\chi) \FT$; that is $\mathcal{P}_\ep$ is a self-adjoint smoothing operator, given by truncating a function to the region $\ep^{-1} \square^* = [\ep^{-1} \pi, \ep^{-1} \pi)^d$ in Fourier space, that also appears in error estimates in homogenisation theory in the work \cite{BiSu04}. 
 \end{remark}

As  \eqref{W1eqn} is a constant coefficient equation then 
\begin{equation*}
	\mathbf{W} = \mathcal{P}_\ep u_0, 
\end{equation*}
where $u_0$ solves
\begin{equation}\label{wh1}
\partial_t^2\, u_0 - \div\, a_0 \nabla u_0 = f.
\end{equation}
That is $W^\ep = (\Dt \mathcal{P_\ep} u_0, a(\tfrac{x}{\ep})(I + \nabla N_0(\tfrac{x}{\ep})) \nabla \mathcal{P}_\ep u_0)^\top$ and \eqref{6.7.23e1} reads
\begin{multline}\label{7.7.23e1}
\Big(	\| U^\ep_1 - \Dt \mathcal{P_\ep} u_0  \|_{L^2_\nu(\RR;L^2(\square))}^2 + 	\| U^\ep_2  - a(\tfrac{x}{\ep})(I + \nabla N_0(\tfrac{x}{\ep})) \nabla \mathcal{P}_\ep u_0 \|_{L^2_\nu(\RR;L^2(\square;\CC^d))}^2  \Big)^{1/2} \\  \le  C \ep \| \Dt^2 f\|_{L^2_\nu(\RR;L^2(\RR^d))}.
\end{multline}

It is desirable to remove the smoothing operator $\mathcal{P}_\ep$  in the inequality \eqref{7.7.23e1} and arrive at estimates between  $U^\ep=(\Dt u_\ep, a(\tfrac{x}{\ep}) \nabla u_\ep)^\top$ and $U^0 = (\Dt u_0, a_0 \nabla u_0)^\top$ the solution to the homogenised evolutionary wave equation 
\begin{equation}
	\label{hws1}
	\Bigg(\partial_t \left( \begin{matrix} 1 & 0 \\ 0 & a_0^{-1}
	\end{matrix} \right) + \left( \begin{matrix} 0 & -\div_x \\ -\nabla_x & 0
	\end{matrix} \right) \Bigg) U^0 = \left( \begin{matrix} f \\ 0
	\end{matrix} \right) \quad t\in \RR,\, x\in \RR^d.
\end{equation}
For this, we rely on the properties of $\mathcal{P}_\ep$, the properties of the corrector $N_0$ and the following standard regularity results for $u_0$:
\begin{equation}\label{u0reg}
	\|u_0\|_{L^2_\nu(\RR; H^2(\RR^d))}+ \| \Dt^2 u_0 \|_{L^2_\nu(\RR; L^2(\RR^d))}  + \| \nabla_x \d_t u_0 \|_{L^2_\nu(\RR;H^1(\square;\CC^d)} \lesssim \| \d_t^2 f \|_{L^2_\nu(\RR;L^2(\square;\CC^d)}
\end{equation}
   Indeed, the following result, proved in Section \ref{ss1.2proof}, holds:
\begin{theorem}\label{whomthm}
	Fix $\ep>0$ and $f \in H^2_\nu(\RR; L^2(\RR^d))$, and suppose \eqref{coeffass} holds. Let $u_\ep$ solve \eqref{w1}, $U^\ep = (\Dt u_\ep, a(\tfrac{x}{\ep} )\nabla u_\ep)^\top $ solve \eqref{ws1}, $u_0$ solve \eqref{wh1} and $U^0 = (\Dt u_0, a_0 \nabla u_0)^\top$ solve \eqref{hws1}.
	Then, the following inequalities hold:
	\begin{align}
		\| \Dt u_\ep - \Dt u_0 \|_{L^2_\nu(\RR;L^2(\RR^d))} &\le C \ep \| \Dt^2 f\|_{L^2_\nu(\RR;L^2(\RR^d))}, \label{7.6.23m1}\\
		\| a(\tfrac{\cdot}{\ep}) \nabla u_\ep  - a(\tfrac{\cdot}{\ep} )\nabla ( u_0 + \ep N_0(\tfrac{\cdot}{\ep}) \cdot  \nabla  u_0)  \|_{L^2_\nu(\RR;L^2(\RR^d;\RR^d))} &\le C \ep \| \Dt^2 f \|_{L^2_\nu(\RR;L^2(\RR^d))},\label{7.6.23m2} \\
		\| a(\tfrac{\cdot}{\ep}) \nabla u_\ep  - a_0   \nabla  u_0 \|_{L^2_\nu(\RR;H^{-1}(\RR^d;\RR^d))}& \le C \ep \| \Dt^2 f \|_{L^2_\nu(\RR;L^2(\RR^d))}\label{7.6.23m3}, 
	\end{align}
	for some $C>0$ independent of $\ep$ and $f$.
\end{theorem}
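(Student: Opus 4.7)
The plan is to start from \eqref{7.7.23e1}, which already delivers an $O(\ep)$ rate once the homogenised field is replaced by the smoothed two-scale expansion $(\Dt \mathcal{P}_\ep u_0,\, a(\tfrac{\cdot}{\ep})(I+\nabla N_0(\tfrac{\cdot}{\ep}))\nabla \mathcal{P}_\ep u_0)^\top$; it then suffices to (i) strip off the smoothing operator $\mathcal{P}_\ep$, and (ii) pass from the corrector flux to $a_0 \nabla u_0$ in the weaker $H^{-1}$-norm. All post-processing is driven by the regularity bound \eqref{u0reg}.

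For \eqref{7.6.23m1}, I split $\Dt u_\ep - \Dt u_0 = (U^\ep_1 - \Dt \mathcal{P}_\ep u_0) + \Dt(\mathcal{P}_\ep - I)u_0$ and control the first summand by \eqref{7.7.23e1}. Since $I-\mathcal{P}_\ep$ is Fourier multiplication by the characteristic function of $\RR^d \setminus \ep^{-1}\square^*$, Plancherel at fixed time yields
\begin{equation*}
	\|(I - \mathcal{P}_\ep) \Dt u_0\|_{L^2(\RR^d)}^2 = \int_{\RR^d \setminus \ep^{-1}\square^*} |\FT \Dt u_0(\xi)|^2\,\dd\xi \lesssim \ep^2 \|\nabla \Dt u_0\|_{L^2(\RR^d)}^2;
\end{equation*}
integrating in time with weight $e^{-2\nu t}$ and invoking \eqref{u0reg} closes the bound. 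Estimate \eqref{7.6.23m2} follows along the same lines after the product-rule expansion
\begin{equation*}
	\nabla(u_0 + \ep N_0(\tfrac{\cdot}{\ep})\cdot\nabla u_0) = (I + (\nabla N_0)(\tfrac{\cdot}{\ep}))\nabla u_0 + \ep N_0(\tfrac{\cdot}{\ep}) \cdot \nabla^2 u_0;
\end{equation*}
comparing with the second component of \eqref{7.7.23e1} leaves two residual terms, namely $a(\tfrac{\cdot}{\ep})(I + (\nabla N_0)(\tfrac{\cdot}{\ep}))\nabla(I - \mathcal{P}_\ep)u_0$ (bounded by the same Plancherel cut-off, now against $\|u_0\|_{H^2}$) and $\ep\, a(\tfrac{\cdot}{\ep}) N_0(\tfrac{\cdot}{\ep}) \cdot \nabla^2 u_0$, which carries an explicit $\ep$ out front.

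The $H^{-1}$ bound \eqref{7.6.23m3} uses the splitting
\begin{equation*}
	a(\tfrac{\cdot}{\ep})\nabla u_\ep - a_0 \nabla u_0 = \big[a(\tfrac{\cdot}{\ep})\nabla u_\ep - a(\tfrac{\cdot}{\ep})\nabla(u_0 + \ep N_0(\tfrac{\cdot}{\ep})\cdot\nabla u_0)\big] + \big[a(\tfrac{\cdot}{\ep})(I+(\nabla N_0)(\tfrac{\cdot}{\ep})) - a_0\big]\nabla u_0 + \ep\, a(\tfrac{\cdot}{\ep}) N_0(\tfrac{\cdot}{\ep}) \cdot \nabla^2 u_0.
\end{equation*}
The first bracket is $O(\ep)$ in $L^2 \hookrightarrow H^{-1}$ by \eqref{7.6.23m2}, and the last summand is $O(\ep)$ in $L^2$. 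For the middle term I invoke the classical Jikov--Kozlov--Oleinik flux-correction: the cell problem \eqref{thetacell} makes $a(I+\nabla N_0) - a_0$ periodic, row-wise divergence-free and mean-zero, so there is a skew-symmetric periodic tensor $B \in L^2_{\per}(\square)$ with $a(I+\nabla N_0) - a_0 = \Dive_y B$. Rescaling in $y$ turns this into
\begin{equation*}
	[a(\tfrac{\cdot}{\ep})(I+(\nabla N_0)(\tfrac{\cdot}{\ep})) - a_0]\nabla u_0 = \ep\,\Dive_x\!\big[B(\tfrac{\cdot}{\ep})\nabla u_0\big] - \ep B(\tfrac{\cdot}{\ep})\cdot \nabla^2 u_0,
\end{equation*}
whose first contribution is $O(\ep)$ in $H^{-1}$ by duality and whose remainder is $O(\ep)$ in $L^2$.

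The principal technical obstacle is the pointwise multiplication $N_0(\tfrac{\cdot}{\ep})\cdot\nabla^2 u_0$ and $(\nabla N_0)(\tfrac{\cdot}{\ep})\nabla (I-\mathcal{P}_\ep)u_0$ in $L^2(\RR^d)$: under the bare coercivity assumption \eqref{coeffass}, $N_0$ and $\nabla N_0$ lie only in $H^1_{\per}$ and $L^2_{\per}$, which for $d \ge 3$ is insufficient to pair with a generic $L^2$ factor via an $L^\infty$-bound. One must therefore either impose extra regularity on $a$ (so that $N_0 \in W^{1,\infty}_{\per}(\square)$) or exploit Meyers-type higher integrability of $\nabla N_0$ together with an improved integrability of $\nabla u_0$ and $\nabla^2 u_0$ implicit in \eqref{u0reg}.
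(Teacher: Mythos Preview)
Your overall strategy for \eqref{7.6.23m1} and \eqref{7.6.23m2} matches the paper's: start from \eqref{7.7.23e1}, remove $\mathcal{P}_\ep$ via the Fourier cut-off bound \eqref{Pe1}, and absorb the product-rule remainder $\ep N_0(\tfrac{\cdot}{\ep})\cdot\nabla^2 u_0$. The point where you diverge is the one you flag as an obstacle, and here your proposed fixes (extra smoothness of $a$, or Meyers higher integrability) are stronger than necessary. The paper closes the gap under the bare assumption \eqref{coeffass} in two steps: first, each $N_0^j$ solves a \emph{scalar} divergence-form elliptic equation with bounded measurable coefficients, so De~Giorgi--Nash gives $N_0\in L^\infty_{\per}(\square)$ and the term $\ep\, a(\tfrac{\cdot}{\ep})N_0(\tfrac{\cdot}{\ep})\cdot\nabla^2 u_0$ is controlled directly in $L^2$; second, for the $\nabla N_0$ factor the paper invokes the Suslina/Zhikov--Pastukhova multiplier lemma
\[
\int_{\RR^d}|\nabla N_0^j(\tfrac{x}{\ep})\,\phi(x)|^2\,\dd x \lesssim \int_{\RR^d}\big(|\phi|^2+\ep^2|\nabla\phi|^2\big)\,\dd x,\qquad \phi\in H^1(\RR^d),
\]
applied with $\phi=\partial_j(I-\mathcal{P}_\ep)u_0$, which yields $\|\nabla N_0(\tfrac{\cdot}{\ep})\nabla(I-\mathcal{P}_\ep)u_0\|_{L^2}\lesssim \ep\|u_0\|_{H^2}$. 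So no additional hypothesis on $a$ is needed.

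For \eqref{7.6.23m3} you take a genuinely different route. The paper does \emph{not} introduce the skew-symmetric flux potential $B$; instead it proves a Fourier-averaging identity (Proposition~\ref{mv}): for periodic $\gamma\in L^2_{\per}(\square)$ one has $\mathcal{P}_\ep\big(\gamma(\tfrac{\cdot}{\ep})\mathcal{P}_\ep\phi\big)=\langle\gamma\rangle\,\mathcal{P}_\ep\phi$. Taking $\gamma=a(e_j+\nabla N_0^j)$ gives $\mathcal{P}_\ep\big(a(\tfrac{\cdot}{\ep})(I+\nabla N_0(\tfrac{\cdot}{\ep}))\nabla\mathcal{P}_\ep u_0\big)=a_0\nabla\mathcal{P}_\ep u_0$ exactly, and then the $H^{-1}$ bound follows from $\|(I-\mathcal{P}_\ep)h\|_{H^{-1}}\lesssim\ep\|h\|_{L^2}$. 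Your $B$-argument is classical and also works, but it again requires controlling $B(\tfrac{\cdot}{\ep})\nabla u_0$ and $B(\tfrac{\cdot}{\ep})\cdot\nabla^2 u_0$ in $L^2$; since $B\in H^1_{\per}$ only, you would need the same multiplier-type lemma for $B$ (which is available, but you have not invoked it). The paper's $\mathcal{P}_\ep$-identity sidesteps the introduction of $B$ altogether and stays entirely within the Fourier machinery already in place.
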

Notice that, since $\Dt$ is boundedly invertible in $L^2_\nu(\RR^d; L^2(\RR^d))$ for $\nu>0$, \eqref{7.6.23m1} implies an error estimate for the homogenisation of the wave equation. In turn, the uniform positivity of $a(\tfrac{\cdot}{\ep})$ along with \eqref{7.6.23m2} imply the so-called corrector estimate. The estimates \eqref{7.6.23m1}  and \eqref{7.6.23m3} provide an error estimate for the homogenisation of the evolutionary wave system. That is, the following inequalities hold:
\begin{corollary}
Under the assumptions in Theorem \ref{whomthm}, we have
\begin{align*}
\|   u_\ep - u_0  \|_{L^2_\nu(\RR;L^2(\RR^d))} & \le C\ep \| \Dt f\|_{L^2_\nu(\RR;L^2(\RR^d))}\\
\|   u_\ep - (u_0 + \ep N_0(\tfrac{\cdot}{\ep}) \cdot \nabla u_0) \|_{L^2_\nu(\RR;H^1(\RR^d))} &\le C\ep \| \Dt^2 f\|_{L^2_\nu(\RR;L^2(\RR^d))}\\
\Big(\|   U^\ep_1 - U^0_1  \|_{L^2_\nu(\RR;L^2(\RR^d))}^2 + 	\| U^\ep_2 - U^0_2 \|_{L^2_\nu(\RR;H^{-1}(\RR^d;\RR^d))}^2\Big)^{1/2} &\le C \ep \| \Dt^2 f\|_{L^2_\nu(\RR;L^2(\RR^d))}.
\end{align*}
\end{corollary}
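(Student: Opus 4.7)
The plan is to derive each of the three estimates as direct consequences of the three bounds \eqref{7.6.23m1}--\eqref{7.6.23m3} in Theorem \ref{whomthm}, exploiting (i) linearity of the wave problems in the datum $f$, (ii) the bounded invertibility of $\Dt$ on $L^2_\nu(\RR;L^2(\RR^d))$, (iii) the uniform ellipticity of $a(\tfrac{\cdot}{\ep})$, and (iv) the regularity estimate \eqref{u0reg} for $u_0$.

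For the first inequality, I will use linearity: if one denotes the solution map $f\mapsto u_\ep(f)$ for \eqref{w1}, then $u_\ep(\Dt^{-1} f) = \Dt^{-1} u_\ep(f)$ and similarly for $u_0$. Substituting $\Dt^{-1} f$ in place of $f$ in \eqref{7.6.23m1} therefore yields
\[
 \|u_\ep - u_0\|_{L^2_\nu(\RR;L^2(\RR^d))} = \|\Dt u_\ep(\Dt^{-1} f) - \Dt u_0(\Dt^{-1} f)\|_{L^2_\nu} \le C\ep\|\Dt^2 (\Dt^{-1} f)\|_{L^2_\nu} = C\ep\|\Dt f\|_{L^2_\nu},
\]
as desired.

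For the third inequality, which is purely algebraic: the first summand equals the left-hand side of \eqref{7.6.23m1} and the second summand equals the left-hand side of \eqref{7.6.23m3}, both of which are bounded by $C\ep\|\Dt^2 f\|_{L^2_\nu(\RR;L^2(\RR^d))}$. Using $\sqrt{A^2+B^2}\le A+B$ (or absorbing a factor $\sqrt{2}$ into $C$) gives the claimed combined estimate immediately.

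The second inequality is slightly more involved, since $H^1$ requires both an $L^2$ and a gradient bound. For the gradient part, uniform ellipticity \eqref{coeffass} yields
\[
 \|\nabla(u_\ep - u_0 - \ep N_0(\tfrac{\cdot}{\ep})\cdot\nabla u_0)\|_{L^2_\nu(\RR;L^2(\RR^d;\RR^d))} \le \tfrac{1}{\kappa}\|a(\tfrac{\cdot}{\ep})\nabla(u_\ep - u_0 - \ep N_0(\tfrac{\cdot}{\ep})\cdot\nabla u_0)\|_{L^2_\nu(\RR;L^2)},
\]
and the right-hand side is controlled by $C\ep\|\Dt^2 f\|_{L^2_\nu}$ via \eqref{7.6.23m2}. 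For the $L^2$ part, the triangle inequality gives $\|u_\ep - u_0\|_{L^2_\nu(L^2)} + \ep\|N_0(\tfrac{\cdot}{\ep})\cdot\nabla u_0\|_{L^2_\nu(L^2)}$; the first summand is $\le C\ep\|\Dt f\|_{L^2_\nu}\le C\ep\|\Dt^2 f\|_{L^2_\nu}$ (by part one together with boundedness of $\Dt^{-1}$ on $L^2_\nu$), while the second, using boundedness of the periodic corrector $N_0$, is $\le C\ep\|N_0\|_{L^\infty}\|\nabla u_0\|_{L^2_\nu(L^2)}\le C\ep\|\Dt^2 f\|_{L^2_\nu}$ by \eqref{u0reg}. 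The main (minor) obstacle is justifying boundedness of the corrector term in $L^2$, which here rests on $N_0\in L^\infty_{\per}(\square)$, a standard property of the periodic corrector under \eqref{coeffass}; with this in hand, combining the two pieces delivers the $H^1$ bound.
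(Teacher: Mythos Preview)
Your proof is correct and follows essentially the same route as the paper, which only sketches the argument in the sentence preceding the corollary: bounded invertibility of $\Dt$ (together with linearity of the solution map, exactly as you exploit) for the first estimate, uniform positivity of $a(\tfrac{\cdot}{\ep})$ together with \eqref{7.6.23m2} for the second, and the combination of \eqref{7.6.23m1} and \eqref{7.6.23m3} for the third. Your treatment of the $L^2$-part of the $H^1$-estimate via $N_0\in L^\infty_{\per}(\square)$ and \eqref{u0reg} is the natural completion the paper leaves implicit.
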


\subsection{Proof of Theorem \ref{WaveMainthm}}\label{s.wass}
The main step is to  identify a reducing subspace $E_\t$ of  (the closed linear skew-self-adjoint operator in $L^2(\square;\CC \times \CC^d)$)  $A_\t : =  \left( \begin{matrix} 0 & -(\div + \i \t \cdot) \\ -(\nabla + \i \t) & 0
\end{matrix} \right)$ that has the additional property that $A_\t$ restricted to the complement of $E_\t$ is uniformly invertible. In this example, it is easy to determine such an $E_\t$. Indeed,
one readily has the orthogonal decompositions:
\begin{equation}\label{maindecomp}
	L^2(\square) =\CC \oplus \{ f \in L^2(\square) \, | \, \langle f,1\rangle_{L^2(\square)}=0 \}\qquad \text{and} \qquad L^2(\square;\CC^d) =L^2_{\pot,\t}(\square) \oplus  L^2_{\sol, \t}(\square)
\end{equation}
where
\[
L^2_{\pot, \t}(\square) : =\left\{  \begin{array}{lc}
	\{ (\nabla + \i \t) p  \, | \,  p \in H^1_{\per}(\square) \} & \t \neq 0, \\
	\{ c + \nabla p \, |\,  c \in \CC^d \text{ and } p \in H^1_{\per}(\square), \langle p,1 \rangle_{L^2(\square)}=0 \} & \t = 0,
\end{array} \right.
\]
and
\[
L^2_{\sol, \t}(\square) : = \left\{   \begin{array}{lc}
	\{ s  \in L^2(\square;\CC^d) \, | \, \div\, s +\i \t \cdot s = 0 \text { in } (H^1_{\per}(\square))^* \} & \t \neq 0, \\
	\{ s  \in L^2(\square;\CC^d) \, | \, \div\, s  = 0 \text { in } (H^1_{\per}(\square))^* \text{ and }  \langle s,1\rangle_{L^2(\square)}=0\} & \t = 0.
\end{array} \right.
\]
Furthermore, it is clear from \eqref{maindecomp}$_1$ that 
\begin{equation}\label{L2potrep}
	L^2_{\pot,\t}(\square) \subseteq  \{  \xi + (\nabla + \i \t)p \, | \, \xi \in \CC^d,  p \in H^1_{\per,0}(\square)  \},
\end{equation}
where
\[
   H^1_{\per,0}(\square)\coloneqq \{p\in H^1_{\per}(\square)\, | \, \langle p,1 \rangle_{L^2(\square)}=0\}.
\]
This identification along with another application of \eqref{maindecomp} gives the following orthogonal decomposition:
\begin{equation}\label{p.decomp}
	L^2(\square; \CC\times\CC^{d}) = E_\t\, \oplus\, E^\perp_\t, \quad \t \in \square^*,
\end{equation}
where 
\begin{equation}\label{E1}
	\begin{aligned}
		&E_\t : =  \{  c +  (
		0, s)^\top \, | \, c \in \CC \times \CC^{d}, s \in L^2_{\sol,\t}(\square)\}; \\
		&E^\perp_\t : = \{ (p_1, (\nabla + \i \t) p_2)^\top
		\, | \, p_1 \in  L^2(\square), p_2 \in H^1_{\per}(\square), \langle p_i, 1 \rangle_{L^2(\square)}=0 , i\in \{1,2\}\}.
	\end{aligned}
\end{equation}
\begin{remark}\label{remEform}
Notice that for $P_\t : L^2(\square;\CC \times \CC^d) \rightarrow E_\t$ the orthogonal projection, one has $P_\t(u_1,u_2)^\top =(P_1 u_1, P_2 u_2)^\top$ where $P_1: L^2(\square) \rightarrow \CC$ and $P_2 : L^2(\square;\CC^d) \rightarrow \CC^d \oplus L^2_{\sol,\t}(\square)$ are the orthogonal projections. Observe that $P_1$ is independent of $\t$. 
\end{remark}

The following result was proved in \cite[Section 3]{CoWa19}:
\begin{proposition}\label{prop.diagA}
The closed subspace $E_\t$ reduces $A_\t$; namely, 
\begin{equation*}
P_\t A_\t \subseteq A_\t P_\t.
\end{equation*}
Moreover, $A_\t P_\t^\perp$ is uniformly invertible; i.e., \begin{equation}\label{A2ubi}
\sup_{\t \in \square^*} \| (A_\t P_\t^\perp)^{-1}  \|_{L(E_\t^\perp)} <\infty
\end{equation}

\end{proposition}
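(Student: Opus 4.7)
The plan is to establish the two claims separately: first that $E_\t$ and its orthogonal complement are each invariant under (the appropriate restrictions of) $A_\t$, from which the reducing relation $P_\t A_\t \subseteq A_\t P_\t$ follows by standard operator-theoretic considerations; second, to invert $A_\t$ on $E_\t^\perp$ by means of a uniform Poincar\'e-type estimate on the unit cell.

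For the invariance, direct computation suffices. A generic element of $E_\t \cap \dom A_\t$ has the form $((c_1,c_2) + (0,s))^\top$ with $c_1\in \CC$, $c_2 \in \CC^d$ and $s \in L^2_{\sol,\t}(\square)$; applying $A_\t$ and using $\nabla c_1 = 0$, $\div c_2 = 0$, and $(\div + \i \t \cdot) s = 0$, one sees that the image lies in $\CC \times \CC^d \subset E_\t$. For $(p_1, (\nabla + \i \t) p_2)^\top \in E_\t^\perp \cap \dom A_\t$, one computes
\[
A_\t\begin{pmatrix} p_1 \\ (\nabla + \i \t) p_2 \end{pmatrix} = \begin{pmatrix} -(\div + \i\t \cdot)(\nabla + \i\t) p_2 \\ -(\nabla + \i\t) p_1 \end{pmatrix};
\]
a brief integration by parts using periodicity together with the mean-zero condition on $p_2$ shows the first component is mean-zero, and the second is manifestly of the required gradient form with mean-zero potential $-p_1$, so the image lies in $E_\t^\perp$. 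Combined with the observation that $P_\t$ preserves $\dom A_\t$ (since the constants and solenoidal fields making up $E_\t$ lie trivially in the domain), this yields the reducing property.

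The crux of the uniform invertibility is the Poincar\'e-type inequality
\[
\| u \|_{L^2(\square)} \le \pi^{-1} \| (\nabla + \i \t) u \|_{L^2(\square; \CC^d)}, \qquad u \in H^1_{\per,0}(\square), \ \t \in \square^*.
\]
This can be proved by expanding $u$ in the Fourier basis $\{e^{2\pi\i k \cdot y}\}_{k\in \ZZ^d}$: the mean-zero condition eliminates the $k=0$ mode, and for any $k \in \ZZ^d \setminus \{0\}$ and $\t \in [-\pi,\pi)^d$ one has $|2\pi k + \t|_\infty \ge \pi$, so that Parseval delivers the stated bound with a constant independent of $\t$. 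This is the only step requiring genuine care, and it is where the specific choice $\square^* = [-\pi,\pi)^d$ is used decisively; note that the edge case $\t = 0$ is harmless because the mean-zero constraint still excludes the only offending mode.

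Given this uniform Poincar\'e estimate, the inversion of $A_\t P_\t^\perp$ on $E_\t^\perp$ is routine. To solve $A_\t (p_1, (\nabla+\i\t) p_2)^\top = (f_1, (\nabla + \i\t) q)^\top$, the second component yields $(\nabla + \i \t)(p_1 + q) = 0$; injectivity of $\nabla + \i\t$ on $H^1_{\per,0}(\square)$ (an immediate consequence of the Poincar\'e estimate) forces $p_1 = -q$, with $\|p_1\|_{L^2} \le \pi^{-1}\|(\nabla + \i\t) q\|_{L^2}$. The first component is the elliptic equation $-(\div + \i\t \cdot)(\nabla + \i\t) p_2 = f_1$ on $H^1_{\per,0}(\square)$, solvable by Lax--Milgram with coercivity constant $\pi^2$ from the Poincar\'e estimate, giving $\|(\nabla + \i\t) p_2\|_{L^2} \le \pi^{-1}\|f_1\|_{L^2}$. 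Combining the two bounds produces a constant independent of $\t$, establishing \eqref{A2ubi}.
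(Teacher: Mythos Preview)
Your argument is correct. The paper itself does not prove this proposition but simply cites \cite[Section 3]{CoWa19}; what you have written is a self-contained proof along the expected lines, namely direct verification of invariance plus a uniform Poincar\'e inequality on $H^1_{\per,0}(\square)$ obtained via Fourier series. The only point one might tighten is the domain bookkeeping: to conclude $P_\t\, \dom A_\t \subseteq \dom A_\t$ you implicitly use that any $s\in L^2_{\sol,\t}(\square)$ lies in $H_{\div,\per}(\square)$ (since $\div s = -\i\t\cdot s \in L^2$), and for the surjectivity step on $E_\t^\perp$ you need that the Lax--Milgram solution $p_2$ actually belongs to $H^2_{\per}(\square)$ so that $(\nabla+\i\t)p_2 \in H_{\div,\per}(\square)$; both are immediate (the latter by the same Fourier expansion you already use), but it is worth making explicit.
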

 Equipped with the above diagonalisation of $A_\t$ we can readily provide our first approximation for  $U^\ep_{\l,\t}$.  
\begin{proposition}\label{prop.W1stest}
Consider $U^\ep_{\l,\t}$ the solution to \eqref{waveP1}  and $V^\ep_{\l,\t}\in E_\t$ the solution to 
	\begin{equation}\label{waveP2}
		\Bigg(\lambda P_\t \left( \begin{matrix} 1 & 0 \\ 0 & a^{-1}
		\end{matrix} \right)  + \ep^{-1}  \left( \begin{matrix} 0 & -(\div + \i \t \cdot) \\ -(\nabla + \i \t) & 0
	\end{matrix} \right) \Bigg)V^\ep_{\l,\t} =P_\t  \left( \begin{matrix} F\\ 0
\end{matrix} \right).
	\end{equation} 

Then, the following estimate holds: there exists $C>0$ independent of $\ep, \l, \t$ and $F$ such that  
	\begin{equation}\label{W.errest1.1}
		\| U^\ep_{\l,\t} - V^\ep_{\l,\t} \|_{L^2(\square; \CC^{d+1})} \le C \ep | \l |^2 \| F\|_{L^2(\square)}.
	\end{equation}
\end{proposition}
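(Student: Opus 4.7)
The plan is to establish \eqref{W.errest1.1} by decomposing $U^\ep_{\l,\t}$ along the orthogonal splitting $L^2(\square;\CC\times\CC^d) = E_\t \oplus E_\t^\perp$ into its $E_\t$-part (which we will compare to $V^\ep_{\l,\t}$) and its $E_\t^\perp$-part (which will be shown to be $O(\ep)$ directly via the uniform invertibility of $A_\t P_\t^\perp$ from Proposition \ref{prop.diagA}). Throughout write $U = U^\ep_{\l,\t}$, $U_1 = P_\t U$, $U_2 = P_\t^\perp U$, and $M = \mathrm{diag}(1,a^{-1})$.

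First I would estimate $U_2$. Applying $P_\t^\perp$ to \eqref{waveP1} and using $P_\t^\perp A_\t \subseteq A_\t P_\t^\perp$ gives
\[
  \ep^{-1} A_\t U_2 \;=\; P_\t^\perp (F,0)^\top - \l P_\t^\perp M U.
\]
The uniform invertibility bound \eqref{A2ubi} then yields $\|U_2\| \lesssim \ep\bigl(\|F\| + |\l|\,\|U\|\bigr)$. A standard energy identity for the full fibre equation, obtained by pairing \eqref{waveP1} with $U$, using skew-self-adjointness of $A_\t$ to drop the real part of $\langle A_\t U, U\rangle$, and invoking the uniform positivity of $M$ (cf.\ Remark \ref{rem:coerc}), gives $\nu\,\|U\|^2 \lesssim \|F\|\|U\|$, hence $\|U\|\lesssim \|F\|$ with a constant depending only on $\nu$ and $\kappa$. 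Substituting back produces
\[
  \|U_2\| \;\lesssim\; \ep(1+|\l|)\|F\|.
\]

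Next I would apply $P_\t$ to \eqref{waveP1}. Again using the reducing property $P_\t A_\t \subseteq A_\t P_\t$ gives $\l P_\t M U + \ep^{-1} A_\t U_1 = P_\t(F,0)^\top$. Subtracting \eqref{waveP2} and exploiting the fact that $M$ does \emph{not} commute with $P_\t$ (so $P_\t M U \neq P_\t M U_1$), the difference $W := U_1 - V^\ep_{\l,\t} \in E_\t$ satisfies
\[
  \l P_\t M W + \ep^{-1} A_\t W \;=\; -\l P_\t M U_2.
\]
Pairing with $W\in E_\t$, using skew-self-adjointness of $A_\t$ and positivity of $P_\t M P_\t = P_\t M$ on $E_\t$, and taking real parts yields $\nu\|W\|^2 \lesssim |\l|\,\|U_2\|\|W\|$, whence $\|W\|\lesssim |\l|\,\|U_2\|\lesssim \ep|\l|(1+|\l|)\|F\|$. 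Since $|\l|\ge \nu>0$, one has $|\l|(1+|\l|) \lesssim |\l|^2$, and finally
\[
  \|U^\ep_{\l,\t}-V^\ep_{\l,\t}\| \;\le\; \|W\| + \|U_2\| \;\lesssim\; \ep|\l|^2 \|F\|,
\]
with a constant depending only on $\nu$, $\kappa$, $\|M\|_\infty$ and the uniform bound in \eqref{A2ubi}, and in particular independent of $\ep$, $\l$, $\t$ and $F$.

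The main obstacle, and the reason for the extra factor of $|\l|$ compared to the naive $O(\ep)$ estimate, is precisely the failure of $M$ to commute with $P_\t$: the coupling term $\l P_\t M U_2$ forces the inhomogeneity in the equation for $W$ to carry an additional $|\l|$. A secondary technical point is the well-posedness of \eqref{waveP2} in $E_\t$, which follows from the same Picard-type argument as Proposition \ref{prop:PTwave} applied to the coercive operator $\l P_\t M P_\t + \ep^{-1} A_\t|_{E_\t}$ on $E_\t$, so I would merely record it rather than give a full proof.
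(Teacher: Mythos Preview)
Your proof is correct and follows essentially the same route as the paper: decompose $U$ along $E_\t\oplus E_\t^\perp$, use the uniform invertibility of $A_\t P_\t^\perp$ together with the a priori bound $\|U\|\lesssim\|F\|$ to control $U_2$, and then estimate $W=U_1-V$ via an energy argument exploiting skew-self-adjointness of $A_\t$ and positivity of $M$. The only cosmetic differences are that the paper records the refinement $P_\t M U_2=P_\t(0,-a^{-1}(U_2)_2)^\top$ (so only the second component of $U_2$ enters) and writes the $U_2$-bound directly as $\ep|\l|\|F\|$, whereas you keep $\ep(1+|\l|)\|F\|$ and absorb the $1$ at the end using $|\l|\ge\nu$; both lead to the same final estimate.
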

\begin{proof}
Let us drop super and subscripts and $\| \cdot \|$ shall denote the appropriate $L^2$-norm. 

Since $A_\t$ and $P_\t$ commute (cf.\ Proposition \ref{prop.diagA}), then it is known (see for example \cite[Chapter 3, Section 6]{BiSol}) that $A_\t = A_\t^{(1)} + A_\t^{(2)}$, for the linear operators $A^{(1)}_\t : = A_\t P_\t$ and $A^{(2)}_\t := A_\t P_\t^\perp$. 
Consequently,  it follows  from \eqref{waveP1} that 
\begin{gather}
	\label{w.U1p}
	(\l P_\t M + \ep^{-1}  A_\t^{(1)} ) U^{(1)} = P_\t (F,0)^\top - P_\t\l  M U^{(2)}, \\
	\label{w.U2p}
	(\l P_\t^\perp M+ \ep^{-1} A_\t^{(2)} ) U^{(2)} = P_\t^\perp  (F,0)^\top- P_\t^\perp \l M  U^{(1)},
\end{gather}
where $M = {\rm diag}(1,a^{-1})$ and $U=U^{(1)}+U^{(2)}$ is decomposed according to $I=P_\t+P_\t^\bot$. Now, from \eqref{waveP2} and \eqref{w.U1p} (see Remark \ref{remEform}) we find
\[
	(\l P_\t M  + \ep^{-1} A_\t^{(1)} ) (U^{(1)} - V) = - P_\t\l M U^{(2)} = P_\t(0,- \l a^{-1} U^{(2)}_2 )^\top.
\]
By noting that $U^{(1)} - V \in E_\t$ and  $A^{(1)}_\t$ is skew-self-adjoint,  one arrives at
\begin{flalign*}
&\mathrm{Re} \big\langle \lambda   M (U^{(1)} - V) , U^{(1)} - V) \big\rangle  \\&= 	\mathrm{Re} \Big( 
\big\langle \lambda  P_\t M (U^{(1)} - V) , U^{(1)} - V \big\rangle + \ep^{-1}\big\langle A^{(1)}_\t (U^{(1)} - V) , U^{(1)} - V \big\rangle   \Big)\\
& = \mathrm{Re}\big\langle  P_\t(0,- \l a^{-1} U^{(2)}_2 )^\top, U^{(1)} - V \big\rangle  \\
&=\mathrm{Re} \big\langle  (0,- \l a^{-1} U^{(2)}_2 )^\top, U^{(1)} - V \big\rangle,
\end{flalign*}
which along with the positivity of the real part of $M =  \mathrm{diag}(1, a^{-1})$ and the boundedness of $a^{-1}$  (see \eqref{coeffass}) implies
\[
\| U^{(1)} - V \|  \lesssim  \|  P_\t(0,- \l a^{-1} U^{(2)}_2 )^\top \| \lesssim   |\l |\|   U^{(2)}_2 \|.
\]
To prove \eqref{W.errest1.1} it remains to show 
\begin{equation}\label{W.U2est}
\| U^{(2)} \| \lesssim \ep|\l|  \| F \|. 
\end{equation}
From \eqref{w.U2p} one has $U^{(2)} = \ep (A^{(2)}_\t)^{-1} ( P_\t^\perp  (F,0)^\top- P_\t^\perp \l M  U)$
which along with \eqref{A2ubi} and  the bound 
$	\|  U  \|  \lesssim \|  F \|$ (which readily follows from considering the real part of \eqref{waveP1}, also cf.~\eqref{coeffass}) implies   \eqref{W.U2est}.
The proof is complete. 
 \end{proof}
The next step in determining the leading-order asymptotics for $U^\ep_{\l,\t}$ is to simplify the $\t$-dependence of $V^\ep_{\l,\t}$ further.
 For this, we shall utilise the following alternative representation.
\begin{proposition}\label{Vrepresentation}
The solution $V^\ep_{\l,\t}$ to \eqref{waveP2} has the form
\begin{equation}\label{Vrep}
V^\ep_{\l,\t} =  c \left( \begin{matrix}
\l  \\ a(I+ \nabla N_\t + \i \t \otimes N_\t)	\frac{\i \t}{\ep } 
\end{matrix} \right), \quad c = (\l^2 + a_\t \tfrac{\t}{\ep} \cdot \tfrac{\t}{\ep} )^{-1}  \langle F,1 \rangle_{L^2(\square)},
\end{equation}
where $a_\t$ is the constant coefficient $d \times d$-matrix given by
\begin{equation}\label{atheta}
(a_\t)_{pq} : = \langle a(I + \nabla N_\t + \i \t \otimes  N_\t) e_p , e_q \rangle_{L^2(\square)}, \qquad p,q \in \{1,2,\ldots,d\},
\end{equation}
for $e_j$ the $j$-th Euclidean basis vector and $N_\t$ the vector whose $j$-th component is the solution $N^j_\t \in H^1_{\per,0}(\square)$ to  \begin{equation}\label{thetacell}
	\langle a(\nabla + \i \t)N^j_\t ,(\nabla + \i \t)\phi \rangle_{L^2(\square)} = -\langle a e_j ,(\nabla + \i \t)\phi \rangle_{L^2(\square)}, \quad (\phi \in H^1_{\per,0}(\square)).
\end{equation}
\end{proposition}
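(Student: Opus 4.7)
The plan is to verify directly that the vector $V$ defined by \eqref{Vrep} lies in $E_\t$ and solves \eqref{waveP2}; uniqueness of $V^\ep_{\l,\t}$ then forces $V = V^\ep_{\l,\t}$.

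First I would check that $V \in E_\t$. The component $V_1 = \lambda c$ is constant, hence in $\CC$. Writing $\sigma \coloneqq a(I + \nabla N_\t + \i\t\otimes N_\t)$, the weak cell problem \eqref{thetacell} gives $\langle \sigma e_k, (\nabla + \i\t)\phi\rangle_{L^2(\square)} = 0$ for every $\phi \in H^1_{\per,0}(\square)$ and each $k$. Linearity then makes $V_2 = \tfrac{c}{\ep}\sigma\,\i\t$ orthogonal to $(\nabla + \i\t)H^1_{\per,0}(\square)$, and the decompositions \eqref{maindecomp}--\eqref{L2potrep} identify that orthogonal complement with $\CC^d + L^2_{\sol,\t}(\square)$, i.e.\ the second slot of $E_\t$ in \eqref{E1}.

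Next I would exploit the algebraic identity, valid for any constant $\xi \in \CC^d$,
\[
(I + \nabla N_\t + \i\t\otimes N_\t)\,\xi = \xi + (\nabla + \i\t)(\xi \cdot N_\t),
\]
applied with $\xi = \i\t/\ep$. This decomposes $a^{-1}V_2 = c\tfrac{\i\t}{\ep} + c(\nabla + \i\t)\bigl(\tfrac{\i\t}{\ep}\cdot N_\t\bigr)$ into a constant vector in $\CC^d$ (fixed by $P_2$) and an element of $(\nabla + \i\t)H^1_{\per,0}(\square) \subset E_\t^\perp$ (killed by $P_2$); hence $P_2(a^{-1}V_2) = c\i\t/\ep$, and the second row of \eqref{waveP2} reduces to $\lambda c\i\t/\ep - \ep^{-1}\i\t(\lambda c) = 0$, which holds automatically. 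For the first row, the same orthogonality forces $(\div + \i\t\cdot)V_2$ to be a constant element of $(H^1_{\per}(\square))^*$; pairing against $1$ and integrating by parts, together with the definition \eqref{atheta} of $a_\t$, identifies this constant as $-\tfrac{c}{\ep}\,a_\t \t \cdot \t$. The first row therefore reduces to the scalar equation $(\lambda^2 + a_\t \tfrac{\t}{\ep} \cdot \tfrac{\t}{\ep})c = \langle F, 1 \rangle_{L^2(\square)}$, which is exactly the formula for $c$ in \eqref{Vrep}.

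Uniqueness of $V^\ep_{\l,\t}$ in $E_\t$ is a byproduct of the coercivity already used in Proposition \ref{prop.W1stest}: the real part of $\lambda P_\t M + \ep^{-1}A_\t$ on $E_\t$ is bounded below by $\nu \kappa$ thanks to \eqref{coeffass}, so the constructed $V$ must equal $V^\ep_{\l,\t}$. I expect the main technical step to be the first-row calculation, where one must both recognise that $(\div + \i\t\cdot)V_2$ is constant and show --- via duality with \eqref{thetacell} and the definition of $a_\t$ --- that its value produces precisely the quadratic form $a_\t \t \cdot \t$ of the homogenised matrix.
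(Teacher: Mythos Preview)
Your argument is correct, but it is organised quite differently from the paper's. The paper proceeds \emph{constructively}: it writes a generic element of $E_\t$ as $V=(c_1,\,c_2+s)^\top$ with $c_1\in\CC$, $c_2\in\CC^d$, $s\in L^2_{\sol,\t}(\square)$, reads off the three scalar/vector equations that \eqref{waveP2} imposes on $c_1,c_2,s$, and then solves them --- the key step being to observe from the $L^2_{\sol,\t}$-equation that $a^{-1}(c_2+s)=\xi+(\nabla+\i\t)p_2$ for some $\xi\in\CC^d$, $p_2\in H^1_{\per,0}(\square)$, after which $\xi=\i\t c_1/(\ep\l)$ and $p_2=N_\t\cdot\xi$ drop out, yielding the representation. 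Your route is instead \emph{verificational}: you take the formula \eqref{Vrep} as given, check membership in $E_\t$ via the cell problem, and then test both rows of \eqref{waveP2} directly, the only nontrivial computation being the identification of the constant $(\div+\i\t\cdot)V_2$ with $-\tfrac{c}{\ep}a_\t\t\cdot\t$ through the definition \eqref{atheta}. Both arguments hinge on the same algebraic identity $(I+\nabla N_\t+\i\t\otimes N_\t)\xi=\xi+(\nabla+\i\t)(N_\t\cdot\xi)$ and the same decomposition \eqref{maindecomp}--\eqref{L2potrep}; the paper's version has the advantage of explaining where the formula comes from, while yours is shorter once the answer is known and makes the role of the cell problem (as precisely the condition placing $V_2$ in the right subspace) more transparent.
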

\begin{remark}
	Notice that $N_0$ and  $a_0$ are respectively the classical corrector and homogenised matrix for $a$. 
\end{remark}

\begin{proof}[Proof of Proposition \ref{Vrepresentation}]

From the definition of $E_\t$, see \eqref{E1}, one has \[
V^\ep_{\l,\t} = \left( \begin{matrix}
c_1 \\ c_2 + s	
\end{matrix} \right) 
\] for some $c_1 \in \CC$, $c_2\in  \CC^d$ and $s\in L^2_{\sol, \t}(\square)$. Moreover,  \eqref{waveP2} (cf. Remark \ref{remEform}) states $c_1,c_2$ and $s$ solve the system
\begin{gather}
	\l c_1 - \ep^{-1} \i \t \cdot c_2   = \langle F,1  \rangle_{L^2(\square)},  \label{Vp1}\\
	\l \langle  a^{-1}(c_2+s) ,\tilde{c} \rangle_{L^2(\square;\CC^d)} - \i \tfrac{\t}{\ep}  \langle  c_1,\tilde{c} \rangle_{L^2(\square;\CC^d)}= 0, \quad  (\tilde{c} \in \CC^d) \label{Vp1.1} \\
	\langle a^{-1}(c_2+s) ,\tilde{s} \rangle_{L^2(\square)} = 0, \quad (\tilde{s} \in L^2_{\sol,\t}), \label{Vp2}
\end{gather}
where the third equation used the fact that $\langle \i \t c_1, \tilde{s} \rangle_{L^2(\square)} = 0$ (since $\div\, \tilde{s} +\i \t \cdot \tilde{s} = 0$ in $(H^1_{\per}(\square))^*$). From \eqref{Vp2}, \eqref{maindecomp}$_2$ and \eqref{L2potrep}, it follows that 
\begin{equation}\label{9.7.24e1}
a^{-1}(c_2 + s) = \xi +  (\nabla + \i \t) p_2
\end{equation} for some $\xi \in \CC^d$, $p_2 \in H^1_{\per,0}(\square)$.  Now,  from \eqref{9.7.24e1},  one has
\[
	\langle \xi,\tilde{c} \rangle_{L^2(\square;\CC^d)} =	\langle a^{-1}(c_2 +s),\tilde{c} \rangle_{L^2(\square;\CC^d)}, \quad ( \tilde{c} \in \CC^d),  
\]
which, along with  \eqref{Vp1.1},  gives $\xi = \i \tfrac{\t}{\ep \l} c_1$. Furthermore, from \eqref{9.7.24e1}, one has
\begin{equation*}
	\langle a\xi  + a(\nabla + \i \t) p_2), (\nabla+\i \t ) \phi_0\rangle_{L^2(\square;\CC^d)} =0 , \quad (\phi_0 \in H^1_{\mathrm{per},0}(\square)),
\end{equation*}
that is $p_2 = N_\t \cdot \xi$ (cf. \eqref{thetacell}). Thus, the above considerations show that 
\[
V^\ep_{\l,\t} = \left( \begin{matrix}
	c_1 \\ c_2 + s	
\end{matrix} \right)  = \left( \begin{matrix}
c_1 \\ \xi + (\nabla + \i \t) p_2	
\end{matrix} \right)  =\frac{c_1}{\l}  \left( \begin{matrix}
\l \\ \i \frac{\t}{\ep }  + (\nabla + \i \t)	N_\t \cdot \i \frac{\t}{\ep } 
\end{matrix} \right) 
\]
It remains to show that $c = c_1 / \l$ satisfies
\begin{equation}\label{9.7.24e4}
 (\l^2 + a_\t \tfrac{\t}{\ep} \cdot \tfrac{\t}{\ep} ) c = \langle F,1 \rangle_{L^2(\square)}.
\end{equation}
To that end, note that \eqref{9.7.24e1} implies 
\begin{equation*}
	\langle a\xi  + a(\nabla + \i \t) p_2), \i \t  \tilde{c} \rangle_{L^2(\square;\CC^d)} =\langle c_2, \i \t \tilde{c} \rangle_{L^2(\square;\CC^d)} , \quad (\tilde{c}\in \CC^d).
\end{equation*} 
This equation and  \eqref{atheta} imply that
\[
c_2 \cdot  \i \t = a_\t \xi  \cdot \i \t,
\]
which, in turn,  along with \eqref{Vp1}, and the identity $\xi = \i \frac{\t}{\ep \l} c_1$, gives \eqref{9.7.24e4}.
\end{proof}

The dependence of $V^\ep_{\l,\t}$ on $\t$ can be further simplified, for small $\ep$,  by utilising the following properties of $a_\t$ and $N_\t$  (established in \cite[Section 4]{CoWa19}): 
\begin{proposition}\label{Ntasym}
Assume $a$ satisfies \eqref{coeffass}. Then, there exists $K>0$ such that  
\begin{equation*}
K |\xi|^2 \le a_\t \xi \cdot \overline{\xi} \le K^{-1} |\xi|^2, \quad (\xi \in \RR^d, \,  \t \in \square^*).
\end{equation*}
Furthermore, there exists  $C>0$ such that
\[
\| N_\t \xi \| \le C |\xi|, \qquad \| N_\t \cdot \xi - N_0 \cdot  \xi \|_{H^1(\square)} \le C |\t| |\xi |, \quad (\xi \in \CC^d,\, \t \in \square^*),
\]
and 
\[
|a_\t \xi \cdot \overline{\eta} - a_0 \xi \cdot \overline{\eta}| \le C |\t| |\xi | | \eta|, \quad (\xi, \eta \in \CC^d, \,  \t \in \square^*).
\]
\end{proposition}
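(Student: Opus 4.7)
The plan is to exploit the cell equation \eqref{thetacell} together with a $\t$-uniform Poincar\'e inequality on $H^1_{\per,0}(\square)$. This inequality reads $\|u\|_{L^2(\square)} \le \pi^{-1}\|(\nabla+\i\t)u\|_{L^2(\square;\CC^d)}$ uniformly for $\t\in\square^*=[-\pi,\pi)^d$ and $u\in H^1_{\per,0}(\square)$. It follows from Fourier series: writing $u=\sum_{k\in\ZZ^d\setminus\{0\}}c_k e^{2\pi\i k\cdot y}$ we have $(\nabla+\i\t)u=\sum_{k\neq 0}\i(2\pi k+\t)c_k e^{2\pi\i k\cdot y}$, and for any $k\neq 0$ some component satisfies $|k_j|\ge 1$ while $|\t_j|\le\pi$, giving $|2\pi k+\t|\ge\pi$ and hence the claimed bound by Parseval.

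Given this, I would test \eqref{thetacell} with $\phi=N_\t^j$, take real parts and use coercivity of $a$ to obtain $\|(\nabla+\i\t)N_\t^j\|\lesssim 1$; the Poincar\'e inequality then yields $\|N_\t^j\|\lesssim 1$, proving the uniform bound on $N_\t$. For the coercivity and boundedness of $a_\t$, the key identity is that with $W_\xi:=\xi+(\nabla+\i\t)(N_\t\cdot\xi)$ one has, using \eqref{thetacell},
\[
  a_\t\xi\cdot\overline{\xi}=\langle aW_\xi,\xi\rangle=\langle aW_\xi,W_\xi\rangle,
\]
the second equality because $\langle aW_\xi,(\nabla+\i\t)(N_\t\cdot\xi)\rangle=0$ by the cell equation and because $\langle\xi,(\nabla+\i\t)(N_\t\cdot\xi)\rangle=0$ by integration by parts together with the mean-zero property of $N_\t$. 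The lower bound then comes from $\Re a\ge\kappa I$, which gives $\Re(aW_\xi\cdot W_\xi)\ge\kappa\|W_\xi\|^2\ge\kappa|\xi|^2$, and the upper bound from $\|a\|_\infty$ together with the uniform bound on $N_\t$.

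For the Lipschitz dependence, I would set $R_\t:=N_\t-N_0$, subtract the two cell problems and use the $\t=0$ equation to eliminate the principal part; rearranging gives, for every $\phi\in H^1_{\per,0}(\square)$,
\[
  \langle a(\nabla+\i\t)(R_\t\cdot\xi),(\nabla+\i\t)\phi\rangle=-\langle a(\xi+\nabla(N_0\cdot\xi)),\i\t\phi\rangle-\langle a\,\i\t\,(N_0\cdot\xi),(\nabla+\i\t)\phi\rangle.
\]
Testing with $\phi=R_\t\cdot\xi$, invoking the already established uniform bounds on $N_0$, the coercivity of $a$, and the uniform Poincar\'e inequality produces $\|(\nabla+\i\t)(R_\t\cdot\xi)\|\lesssim|\t||\xi|$ and hence $\|R_\t\cdot\xi\|_{H^1(\square)}\lesssim|\t||\xi|$. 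Finally, the Lipschitz estimate for $a_\t$ follows from the representation $(a_\t-a_0)\xi\cdot\overline{\eta}=\langle a(\nabla(R_\t\cdot\xi)+\i\t(N_\t\cdot\xi)),\eta\rangle$ and the bounds just obtained.

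The principal obstacle is the $\t$-uniform Poincar\'e step: it is precisely the confinement $\t\in[-\pi,\pi)^d$ that prevents the shift $\i\t$ from cancelling a nonzero Fourier mode, and this uniformity is the common thread that makes both the bound on $N_\t$ and the small-$\t$ perturbation argument work uniformly across the entire Brillouin zone, so that the $\t=0$ and $\t\neq 0$ cell problems are coupled quantitatively rather than only in the limit.
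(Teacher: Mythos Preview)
Your argument is correct. The paper itself does not provide a proof of this proposition but simply refers to \cite[Section 4]{CoWa19}; you have supplied a complete, self-contained argument in its place. The route you take---the $\t$-uniform Poincar\'e inequality on $H^1_{\per,0}(\square)$ via the Fourier-series observation $|2\pi k+\t|\ge\pi$ for $k\neq 0$ and $\t\in\square^*$, then testing the cell equation to bound $N_\t$, the variational identity $a_\t\xi\cdot\overline{\xi}=\langle aW_\xi,W_\xi\rangle$ for coercivity, and subtraction of the two cell problems for the Lipschitz estimates---is the standard elementary approach to these bounds and is almost certainly what the cited reference does as well.

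One minor remark on the presentation: for the identity $\langle aW_\xi,\xi\rangle=\langle aW_\xi,W_\xi\rangle$ only the cell equation $\langle aW_\xi,(\nabla+\i\t)\phi\rangle=0$ is needed; the orthogonality $\langle\xi,(\nabla+\i\t)(N_\t\cdot\xi)\rangle=0$ that you also mention is rather what you use in the next step to obtain $\|W_\xi\|^2=|\xi|^2+\|(\nabla+\i\t)(N_\t\cdot\xi)\|^2\ge|\xi|^2$.
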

Now, we are ready for the final step.

\begin{proposition}\label{p.VW}
	For fixed $\ep, \t , \l$ and $F$,  consider the solution $V^\ep_{\l,\t}$ to \eqref{waveP2}  and 
		\begin{equation}
			\label{Wrep1}
		W_{\l,\xi} =  c_0 \left( \begin{matrix}
			\l  \\  a(I+ \nabla N_0 )	\i \xi
		\end{matrix} \right), \quad c_0 = (\l^2 + a_0 \xi \cdot \xi )^{-1} \langle F,1 \rangle_{L^2(\square)}, \quad \xi \in \RR^d.
	\end{equation}
 Then, there is $C>0$ independent of $\ep, \l, \t$ and $F$ such that
\begin{equation}
	\| V^\ep_{\l,\t} - W_{\l,\t/\ep} \|_{L^2(\square; \CC^{d+1})} \le C \ep | \l |^2 \| F\|_{L^2(\square)}.
\end{equation}
\end{proposition}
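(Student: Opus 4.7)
The plan is to exploit the explicit representations of $V^\ep_{\l,\t}$ and $W_{\l,\t/\ep}$ from \eqref{Vrep} and \eqref{Wrep1}, together with the perturbation estimates of Proposition \ref{Ntasym}. Set $\xi := \t/\ep$, $\alpha_\t := a_\t \xi \cdot \xi$ and $\alpha_0 := a_0 \xi \cdot \xi$; both quantities are real (by evaluating the two-sided inequality in Proposition \ref{Ntasym} at the real vector $\xi$) and bounded below by $\kappa|\xi|^2$. The first step is the algebraic decomposition
\begin{equation*}
V^\ep_{\l,\t} - W_{\l,\t/\ep} = (c - c_0)\begin{pmatrix} \l \\ a(I + \nabla N_\t + \i \t \otimes N_\t)\i\xi \end{pmatrix} + c_0 \begin{pmatrix} 0 \\ a\bigl[\nabla N_\t - \nabla N_0 + \i\t \otimes N_\t\bigr]\i\xi \end{pmatrix},
\end{equation*}
which reduces the task to controlling $|c-c_0|$, $|c_0|$ and the $L^2(\square)$-norms of the two vector factors.

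The real engine of the argument will be the elementary resolvent-type bound
\begin{equation*}
|\l^2 + s|^{-1} \le \frac{|\l|}{\nu(|\l|^2 + s)}, \qquad s \ge 0,\ \l = \nu + \i k,\ \nu>0,
\end{equation*}
which I would derive via the identity $|\l^2 + s|^2 = (|\l|^2 + s)^2 - 4sk^2$ combined with the rearrangement $|\l|^2|\l^2+s|^2 - \nu^2(|\l|^2+s)^2 = k^2(|\l|^2-s)^2 \ge 0$. Applying this to $s=\alpha_\t$ and $s=\alpha_0$, and feeding in the asymptotic $|\alpha_\t - \alpha_0| \le C|\t||\xi|^2$ from Proposition \ref{Ntasym}, one obtains
\begin{equation*}
|c - c_0| \lesssim \frac{|\t||\xi|^2\,|\l|^2}{(|\l|^2 + |\xi|^2)^2}\|F\|_{L^2(\square)}, \qquad |c_0| \lesssim \frac{|\l|}{|\l|^2 + |\xi|^2}\|F\|_{L^2(\square)},
\end{equation*}
with constants depending only on $\nu,\kappa$ and $K$.

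In the third step I will combine these scalar bounds with vector-norm estimates for the two summands: using the $L^\infty$-bound on $a$ together with $\|N_\t\|_{L^2(\square)} \le C$ and $\|\nabla N_\t - \nabla N_0\|_{L^2(\square)} \le C|\t|$ from Proposition \ref{Ntasym}, the first vector factor has $L^2(\square)$-norm $\lesssim |\l| + |\xi|$ and the second $\lesssim |\t||\xi|$. Writing $X := |\l| \ge \nu$, $Y := |\xi|$ and $|\t| = \ep Y$, the target bound $\|V^\ep_{\l,\t} - W_{\l,\t/\ep}\|_{L^2(\square)} \le C\ep|\l|^2\|F\|$ reduces to the two elementary inequalities
\begin{equation*}
\frac{Y^3(X+Y)}{(X^2+Y^2)^2} \lesssim 1 \qquad\text{and}\qquad \frac{XY^2}{X^2+Y^2} \lesssim X^2,
\end{equation*}
the first following from AM--GM ($2XY \le X^2+Y^2$) and $Y^2\le X^2+Y^2$, the second from $Y^2 \le X^2+Y^2$ together with $X \ge \nu$.

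The main obstacle is the resolvent-type estimate of the second step; it is what permits the hyperbolic symbol $\l^2 + \alpha$, on the spectral line $\Re \l = \nu$, to behave like its elliptic counterpart $|\l|^2 + \alpha$ at the cost of a single power of $|\l|$. This single lost power of $|\l|$ is precisely what accounts for the factor $|\l|^2$ in \eqref{W.errest1} and, after inverse Laplace transformation, for the $\|\d_t^2 f\|$-norm appearing on the right-hand side of Theorem \ref{whomthm}.
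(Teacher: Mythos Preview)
Your proposal is correct and follows essentially the same strategy as the paper: write out $V^\ep_{\l,\t}-W_{\l,\t/\ep}$ from the explicit formulas \eqref{Vrep} and \eqref{Wrep1}, invoke Proposition~\ref{Ntasym} for the $\t$-perturbation terms, and reduce everything to controlling the scalar resolvents $(\l^2+\alpha_\t)^{-1}$ and $(\l^2+\alpha_0)^{-1}$.

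The one noteworthy difference is in how the resolvent is handled. The paper introduces the auxiliary functions $d(t)=t/|\l^2+t|$ and $e(t)=t/|\l^2+t^2|$ and bounds them via a case distinction on the sign of $\Re(\l^2)$ together with a one-variable maximisation. Your route---the algebraic identity $|\l|^2|\l^2+s|^2-\nu^2(|\l|^2+s)^2=k^2(|\l|^2-s)^2\ge 0$, yielding $|\l^2+s|^{-1}\le |\l|\,\nu^{-1}(|\l|^2+s)^{-1}$---is more direct and in fact implies both of the paper's bounds $d(t)\lesssim|\l|$ and $e(t)\le(2\nu)^{-1}$ in one stroke. Your grouping (putting the $(\nabla N_\t-\nabla N_0+\i\t\otimes N_\t)$ term with $c_0$ rather than with $c$) is a harmless variant of the paper's decomposition. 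The final elementary inequalities in $X,Y$ are checked exactly as you indicate.
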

\begin{proof}
 By \eqref{Vrep} and \eqref{Wrep1} one has
\begin{equation*}
\big(V^\ep_{\l,\t}\big)_1 - \big(W_{\l,\t/\ep} \big)_1  = \l (c-c_0),
\end{equation*}
and
\begin{flalign*}
\big(V^\ep_{\l,\t}\big)_2 - \big(W_{\l,\t/\ep} \big)_2 & =a \i \tfrac{\t}{\ep} (c - c_0)  + a\nabla N_0 \i \tfrac{\t}{\ep}   ( c-c_0)   + \Big( a\nabla N_\t \i \tfrac{\t}{\ep} c - a\nabla N_0 \tfrac{\i \t}{\ep}c \Big)+ \i \t \otimes N_\t \tfrac{\i \t}{\ep}c .
\end{flalign*}
The above identities  and Proposition \ref{Ntasym} imply that\marginpar{tilhere}
%
%
%
%
%
%
\[
\| \big(V^\ep_{\l,\t}\big)_1 - \big(W_{\l,\t/\ep} \big)_1 \|_{L^2(\square)}  \le |\l| |c-c_0|, \qquad \text{and} \qquad  \| \big(V^\ep_{\l,\t}\big)_2 - \big(W_{\l,\t/\ep} \big)_2 \|_{L^2(\square; \CC^{d})} 
\lesssim \tfrac{|\t|}{\ep} |c-c_0| +\tfrac{|\t|^2}{\ep} |c|.
\]
By the definitions of $c$,  \eqref{Vrep}, and $c_0$, \eqref{Wrep1}, and Proposition \ref{Ntasym} we compute
\[
\tfrac{|\t|^2}{\ep}|c| \lesssim  \ep \frac{a_\t \tfrac{\t}{\ep }\cdot \tfrac{\t}{\ep}}{|\l^2 +a_\t \tfrac{\t}{\ep }\cdot \tfrac{\t}{\ep}|}\| F\|_{L^2(\square)} = \ep d(a_\t \tfrac{\t}{\ep} \cdot \tfrac{\t}{\ep}) \| F\|_{L^2(\square)},
\]
\begin{flalign*}
	| c - c_0| &= \frac{ | a_\t \tfrac{\t}{\ep} \cdot \tfrac{\t}{\ep} - a_0 \tfrac{\t}{\ep} \cdot \tfrac{\t}{\ep} |}{ |\l^2 + a_\t \tfrac{\t}{\ep} \cdot \tfrac{\t}{\ep} | | \l^2 + a_0 \tfrac{\t}{\ep} \cdot \tfrac{\t}{\ep} | }|\langle F,1\rangle_{L^2(\square)}| 
	\lesssim  \frac{ \tfrac{|\t|^3}{\ep^2} }{ |\l^2 + a_\t \tfrac{\t}{\ep} \cdot \tfrac{\t}{\ep} | | \l^2 + a_0 \tfrac{\t}{\ep} \cdot \tfrac{\t}{\ep} | } \| F \|_{L^2(\square)}
	\\& \lesssim \ep d(a_\t \tfrac{\t}{\ep}\cdot \tfrac{\t}{\ep}) e\Big(\sqrt{a_0 \tfrac{\t}{\ep} \cdot \tfrac{\t}{\ep}}\Big) \| F \|_{L^2(\square)},
\end{flalign*}
and, similarly,
\begin{equation*}
	\tfrac{|\t|}{\ep }| c - c_0| \lesssim \ep d(a_\t \tfrac{\t}{\ep}\cdot \tfrac{\t}{\ep}) d(a_0 \tfrac{\t}{\ep} \cdot \tfrac{\t}{\ep}) \| F \|_{L^2(\square)},
\end{equation*}
where $d(t)\coloneqq\tfrac{t}{|\l^2 + t|}$,  and $e(t)\coloneqq \tfrac{t}{|\l^2 +t^2|}$, for  $t\in [0, \infty)$. Direct calculation shows that
if $\Re(\l^2)>0$ then for $t\in [0,\infty) $, we obtain \[d(t) = \tfrac{t}{\sqrt{(\Re(\l^2) + t)^2 + (\Im \l^2)^2}} \le 1\] and, if $\Re(\l^2)<0$ then $d$ has a positive maximum at $t^* =  \tfrac{-|\l^2|^2}{\Re(\l^2)}$ and, so for all $t\in[0,\infty)$, $d(t) \le d(t^*) =  \tfrac{|\l|^2}{|\Im\l^2 |} \lesssim |\l|$.
Similarly, one can show that
\[
e(t) \le e(|\l|) =  \tfrac{1}{2\nu} \quad (t \in [0,\infty)).
\]
The above results put together yield
\[
\| \big(V^\ep_{\l,\t}\big)_1 - \big(W_{\l,\t/\ep} \big)_1 \|_{L^2(\square)}  \le \ep |\l|^2   \| F \|_{L^2(\square)},  \text{ and } \| \big(V^\ep_{\l,\t}\big)_2 - \big(W_{\l,\t/\ep} \big)_2 \|_{L^2(\square; \CC^{d})} 
\lesssim\ep |\l|^2  \| F \|_{L^2(\square)}.\qedhere
\]\end{proof}
Now, the proof of  Theorem \ref{WaveMainthm} follows from the triangle inequality and Propositions \ref{prop.W1stest} and \ref{p.VW}.

\subsection{Proof of Theorem \ref{whomthm}}\label{ss1.2proof}
\textit{Proof of \eqref{7.6.23m1}.}  For this, we simply use  the following elementary estimate for $\mathcal{P}_\ep $:  there exists $C\geq 0$ such that
\begin{equation}\label{Pe1}
\| h - \mathcal{P}_\ep h \|_{L^2(\RR^d)} \le C \ep\| \nabla h \|_{L^2(\RR^d)}, 
\end{equation}
for all $h \in H^1(\RR^d)$ and all $\ep >0$.

Now, the identity $\Dt \mathcal{P}_\ep u_0 = \mathcal{P}_\ep \Dt u_0$, the above estimate and \eqref{u0reg} gives
\[
	\|    \Dt \mathcal{P}_\ep u_0 - \Dt u_0\|_{L^2_\nu(\RR;L^2(\RR^d))} \lesssim 	\ep\|  \nabla_x \Dt u_0\|_{L^2_\nu(\RR;L^2(\RR^d))}
\]
which, along with  \eqref{7.7.23e1},  
implies \eqref{7.6.23m1}.\\

\textit{Proof of  \eqref{7.6.23m2}.}  First note that
	\[
	\ep \nabla(  N_0(\tfrac{\cdot}{\ep}) \cdot \nabla \P_\ep u_0 )=  \nabla N_0(\tfrac{\cdot}{\ep}) \nabla  \P_\ep u_0 +  \ep N_0(\tfrac{\cdot}{\ep}) \nabla^2 \P_\ep u_0.
	\] Thus,
	\begin{align*}
	a(\tfrac{x}{\ep})(I + \nabla N_0(\tfrac{x}{\ep})) \nabla \mathcal{P}_\ep u_0 
 & =	a(\tfrac{x}{\ep})\nabla (\mathcal{P}_\ep u_0  +\ep   N_0(\tfrac{\cdot}{\ep}) \cdot \nabla \P_\ep u_0 ) -a(\tfrac{x}{\ep})\ep N_0(\tfrac{\cdot}{\ep}) \nabla^2 \P_\ep u_0,
	\end{align*} 
which, along with the boundedness of $a$ and $N_0$ (due to the De Giorgi-Nash elliptic regularity theorem) gives  	
	\begin{flalign*}
	 \| a(\tfrac{x}{\ep})(I + \nabla N_0(\tfrac{x}{\ep})) \nabla \mathcal{P}_\ep u_0   - a(\tfrac{\cdot}{\ep} )\nabla ( u_0 + \ep N_0(\tfrac{\cdot}{\ep}) \cdot  \nabla \P_\ep u_0)  \|_{L^2_\nu(\RR;L^2(\RR^d;\RR^d))} \lesssim \ep \| \nabla^2 \mathcal{P}_\ep u_0\|_{L^2_\nu(\RR;L^2(\RR^d;\RR^d))}.
	\end{flalign*}
	Furthermore,  by the properties of the Fourier transform 
	\[
	\|\nabla^2\mathcal{P}_\ep u_0\|_{L^2_\nu(\RR; L^2(\RR^d))}=\|\mathcal{P}_\ep\nabla^2 u_0\| \leq \|\nabla^2 u_0\|_{L^2_\nu(\RR; L^2(\RR^d))}.
	\]
	 These above observations, along  with \eqref{7.7.23e1} and \eqref{u0reg}, imply 
	\[
	\| a(\tfrac{\cdot}{\ep}) \nabla u_\ep  - a(\tfrac{\cdot}{\ep} )\nabla ( \P_\ep u_0 + \ep N_0(\tfrac{\cdot}{\ep}) \cdot  \nabla \P_\ep u_0)  \|_{L^2_\nu(\RR;L^2(\RR^d;\RR^d))} \lesssim \ep \| \Dt^2 f \|_{L^2_\nu(\RR;L^2(\RR^d))}.
	\]
Arguing in the same manner as above, we can readily  replace $ \nabla  \P_\ep u_0 $ with $ \nabla  u_0 $ in the above inequality (cf. \eqref{Pe1} and \eqref{u0reg})
	 to get
	 	\begin{equation}\label{7.7.23e7}
	 \| a(\tfrac{\cdot}{\ep}) \nabla u_\ep  - a(\tfrac{\cdot}{\ep} )\nabla ( u_0 + \ep N_0(\tfrac{\cdot}{\ep}) \cdot  \nabla \P_\ep u_0)  \|_{L^2_\nu(\RR;L^2(\RR^d;\RR^d))} \lesssim \ep \| \Dt^2 f \|_{L^2_\nu(\RR;L^2(\RR^d))}.
	 \end{equation}
	In the same manner, changing $\ep \nabla^2 \mathcal{P_\ep}u_0$ by $\ep \nabla^2 u_0$ creates a controlled error of order $\epsilon$.  However,  to replace $\nabla N_0(\tfrac{\cdot}{\ep})   \nabla \P_\ep u_0 $ with $\nabla N_0(\tfrac{\cdot}{\ep})   \nabla  u_0$ is more challenging and  relies on the fact that the corrector $N_0$  is a multiplier in $H^1$ (first shown in \cite[Proposition 8.2]{Su13}):
	\begin{lemma}[Lemma 1.2 of \cite{ZhPa}]There is $C\geq0$ such that, for each $j\in \{1,\ldots,d\}$,
	\begin{equation}\label{multiplerest}
		\int_{\RR^d} |\nabla N^j_0(\tfrac{x}{\ep}) \phi(x)|^2 \, \mathrm{d}x\le C \int_{\RR^d} \big( |\phi(x)|^2 + \ep^2|\nabla \phi(x)|^2 \big) \, \mathrm{d}x, \quad ( \phi \in H^1(\RR^d)).
	\end{equation}
	\end{lemma}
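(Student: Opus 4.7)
The plan is to derive \eqref{multiplerest} directly from the cell equation defining $N^j_0$ via a Caccioppoli-type test, exploiting a uniform $L^\infty$-bound on $N^j_0$ rather than going through Meyers-type higher integrability arguments (which would force dimension-dependent losses through Sobolev embedding).

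The first step is to observe that $N^j_0 \in H^1_{\per,0}(\square)$ solves, in the periodic weak sense on $\RR^d$, the scalar uniformly elliptic equation $-\div_y\bigl(a(y)(\nabla N^j_0 + e_j)\bigr)=0$, with coefficients bounded and coercive by \eqref{coeffass}. By De Giorgi--Nash--Moser regularity applied to this scalar equation, $N^j_0$ is locally H\"older continuous, and periodicity then promotes this to a uniform $L^\infty$-bound $\|N^j_0\|_{L^\infty(\RR^d)} \le M$ depending only on $a$.

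Next, set $M_\ep(x) \coloneqq N^j_0(x/\ep)$. A change of scale shows that $M_\ep$ satisfies weakly on $\RR^d$
\[
-\div_x\bigl(a(\tfrac{x}{\ep})\nabla_x M_\ep\bigr) = \tfrac{1}{\ep}\div_x\bigl(a(\tfrac{x}{\ep})e_j\bigr).
\]
For $\phi\in C^\infty_c(\RR^d)$, I would test this identity against $\psi = M_\ep\phi^2$, which is an admissible $H^1(\RR^d)$-function of compact support thanks to the $L^\infty$-bound on $M_\ep$. Expanding $\nabla(M_\ep\phi^2)=\phi^2\nabla M_\ep+2\phi M_\ep\nabla\phi$ and using the ellipticity of $a$, the bound $|M_\ep|\le M$, and Young's inequality to absorb the three resulting cross-terms into a fraction of $\int|\nabla M_\ep|^2\phi^2$, one arrives at
\[
\int_{\RR^d}|\nabla_x M_\ep|^2\phi^2\,\dd x \le \tfrac{C}{\ep^2}\int_{\RR^d}|\phi|^2\,\dd x + C\int_{\RR^d}|\nabla\phi|^2\,\dd x,
\]
with $C$ depending on $\kappa$, $\|a\|_{L^\infty}$ and $M$ but not on $\ep$. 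Finally, using $\nabla_x M_\ep(x)=\tfrac{1}{\ep}(\nabla N^j_0)(x/\ep)$ and multiplying by $\ep^2$ yields \eqref{multiplerest} for $\phi\in C^\infty_c(\RR^d)$, and a density argument extends it to all $\phi\in H^1(\RR^d)$.

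The main obstacle is the $L^\infty$-bound in the first step: De Giorgi--Nash--Moser is classical for scalar equations with real measurable coefficients, but in the Hermitian complex setting of \eqref{coeffass} one must either invoke a complex-scalar analogue of DGNM or split $N^j_0$ into real and imaginary parts and deduce H\"older continuity for the resulting real-coupled pair from the uniform ellipticity inherited from $a$. Every subsequent step is a standard Caccioppoli--Young manipulation; in particular, carefully tracking the $\ep^{-1}$ appearing on the right-hand side of the rescaled equation is exactly what, after the final $\ep^2$-rescaling, produces the correct $\|\phi\|_{L^2}^2 + \ep^2\|\nabla\phi\|_{L^2}^2$ structure in \eqref{multiplerest}.
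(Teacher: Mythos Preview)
Your argument is correct, and it is in fact essentially the classical proof from \cite{ZhPa} (and \cite{Su13}) that the paper cites. The paper itself does not supply a proof of this lemma; it simply quotes the result from the literature, so there is nothing further to compare at the level of strategy. Your Caccioppoli-type test with $M_\ep\phi^2$ and subsequent Young absorption is exactly the standard mechanism, and the $\ep$-bookkeeping you outline is right: the $\ep^{-1}$ on the rescaled right-hand side produces $\ep^{-2}\|\phi\|_{L^2}^2$ before the final multiplication by $\ep^2$.

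One remark on the obstacle you flag. The paper, in the proof of Theorem~\ref{whomthm}, invokes ``the De Giorgi--Nash elliptic regularity theorem'' for the boundedness of $N_0$ without further comment, so it implicitly places itself in a setting where this is available (e.g.\ real symmetric $a$, or a complex-Hermitian variant taken for granted). Your caution is therefore well placed but not a gap relative to the paper's own level of rigour: the paper assumes exactly the $L^\infty$-bound you need as an input, and once that is in hand the remainder of your argument goes through verbatim for complex $\phi$ as well (it suffices to prove the estimate for real $\phi$ and then apply it to $|\phi|$, or to take real parts throughout since $a=a^*$).
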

	Thus, 
	\[
	\| \nabla N_0(\tfrac{\cdot}{\ep})   \nabla \P_\ep u_0 - \nabla N_0(\tfrac{\cdot}{\ep})   \nabla  u_0\|_{L^2_\nu(\RR;L^2(\RR^d;\RR^d))} \lesssim \ep \|u_0\|_{L^2_\nu(\RR; H^2(\RR^d))} \lesssim\ep \| \Dt^2 f\|_{L^2_\nu(\RR; L^2(\RR^d))}.
	\]
Hence, \eqref{7.7.23e7} yields
\begin{equation}\label{7.7.23e7b}
		\| a(\tfrac{\cdot}{\ep}) \nabla u_\ep  - a(\tfrac{\cdot}{\ep} )\nabla ( u_0 + \ep N_0(\tfrac{\cdot}{\ep}) \cdot  \nabla  u_0)  \|_{L^2_\nu(\RR;L^2(\RR^d;\RR^d))} \lesssim \ep \| \Dt^2 f \|_{L^2_\nu(\RR;L^2(\RR^d))},
\end{equation}
i.e.  \eqref{7.6.23m2} holds.\\

\textit{Proof of \eqref{7.6.23m3}.} For this we utilise the following result.
\begin{proposition}\label{mv}
	Let $\ep>0$, $\gamma\in L^2_{\per}(\square)$, $\phi\in L^2(\RR^d)$ and assume $\gamma(\tfrac{\cdot}{\ep})\mathcal{P}_\ep \phi\in L^2(\RR^d)$. Then 
	\begin{equation*}
		\mathcal{P}_\ep \big( \gamma(\tfrac{\cdot}{\ep})\mathcal{P}_\ep\phi \big) = \langle \gamma \rangle \mathcal{P}_\ep\phi, 
	\end{equation*}
	where $\langle \phi \rangle = \int_{\square} \phi(y) \, \mathrm{d}y.$ In particular, if, in addition, $\mathrm{supp}\, \FT \phi \subseteq \ep^{-1} \square^*$, then
	\[
	\mathcal{P}_\ep \big( \gamma(\tfrac{\cdot}{\ep})\phi \big) = \langle \gamma \rangle  \phi. 
	\]
\end{proposition}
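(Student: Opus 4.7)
The plan is to exploit the Fourier multiplier representation $\mathcal{P}_\ep = \FT^{-1}\chi_{\ep^{-1}\square^*}\FT$ noted in the remark after \eqref{W1eqn}. Since $\gamma(\cdot/\ep)$ is $\ep\square$-periodic, its Fourier spectrum lies on the dual lattice $(2\pi/\ep)\ZZ^d$, and the product of such a function with one whose Fourier support lies inside $\ep^{-1}\square^*=[-\pi/\ep,\pi/\ep)^d$ will have Fourier support in the union $\bigcup_{n\in\ZZ^d}\bigl(\ep^{-1}\square^*+2\pi n/\ep\bigr)$. Crucially, these translates tile $\RR^d$ disjointly, so a second application of $\mathcal{P}_\ep$ picks out only the $n=0$ contribution, whose amplitude is the mean $\langle\gamma\rangle$.

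To make this precise I would first reduce to $\gamma\in C^\infty_{\per}(\square)$, where $\gamma(\cdot/\ep)$ is uniformly bounded and the $L^2$ product requires no extra hypothesis. Writing the uniformly convergent Fourier series
\begin{equation*}
\gamma(y)=\sum_{n\in\ZZ^d}c_n e^{\ii 2\pi n\cdot y},\qquad c_n=\int_\square \gamma(y)e^{-\ii 2\pi n\cdot y}\dd y,\qquad c_0=\langle\gamma\rangle,
\end{equation*}
and setting $\psi\coloneqq \mathcal{P}_\ep\phi$, the modulation identity $\FT\bigl(e^{\ii(2\pi n/\ep)\cdot x}\psi\bigr)(\xi)=(\FT\psi)(\xi-2\pi n/\ep)$ combined with $\spt\FT\psi\subseteq\ep^{-1}\square^*$ shows that each summand $e^{\ii(2\pi n/\ep)\cdot\,}\psi$ has Fourier support in the shifted cube $\ep^{-1}\square^*+2\pi n/\ep$. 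Multiplying by $\chi_{\ep^{-1}\square^*}$ in Fourier space annihilates every term with $n\neq 0$, yielding
\begin{equation*}
\mathcal{P}_\ep\bigl(\gamma(\tfrac{\cdot}{\ep})\mathcal{P}_\ep\phi\bigr)=c_0\mathcal{P}_\ep\phi=\langle\gamma\rangle\mathcal{P}_\ep\phi.
\end{equation*}

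The principal obstacle is then to relax the smoothness of $\gamma$ back to $\gamma\in L^2_{\per}(\square)$, since for such $\gamma$ the pointwise product $\gamma(\cdot/\ep)\mathcal{P}_\ep\phi$ need not \emph{a priori} lie in $L^2(\RR^d)$, which is exactly why this integrability is assumed in the statement. I would handle this by approximating $\gamma$ by smooth periodic $\gamma_k\to\gamma$ in $L^2_{\per}(\square)$ with $\langle\gamma_k\rangle\to\langle\gamma\rangle$, applying the identity above to each $\gamma_k$ against a test function $\phi\in\mathcal{S}(\RR^d)$ (where all products manifestly lie in $L^2$), and then passing to the limit using Parseval and the dominated convergence theorem in Fourier space. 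A density argument extending to general $\phi\in L^2(\RR^d)$ satisfying $\gamma(\cdot/\ep)\mathcal{P}_\ep\phi\in L^2(\RR^d)$ then concludes the first identity. The second claim is immediate: if $\spt\FT\phi\subseteq\ep^{-1}\square^*$, then $\mathcal{P}_\ep\phi=\phi$, so the general identity specialises to $\mathcal{P}_\ep(\gamma(\tfrac{\cdot}{\ep})\phi)=\langle\gamma\rangle\phi$.
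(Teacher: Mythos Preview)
Your proposal is correct and follows essentially the same route as the paper: expand $\gamma$ in its Fourier series, use the modulation identity $\FT(e^{\ii(2\pi n/\ep)\cdot}\psi)=(\FT\psi)(\cdot-2\pi n/\ep)$, and observe that the shifted cubes $\ep^{-1}\square^*+2\pi n/\ep$ are disjoint so that $\chi_{\ep^{-1}\square^*}$ kills every mode $n\neq 0$. The only difference is cosmetic: the paper works directly with the $L^2$-Fourier series of $\gamma$ and invokes the standing hypothesis $\gamma(\tfrac{\cdot}{\ep})\mathcal{P}_\ep\phi\in L^2(\RR^d)$ to justify splitting off the tail $\sum_{n\neq 0}$, whereas you first prove the identity for $\gamma\in C^\infty_{\per}(\square)$ and then pass to the limit by approximation---a slightly longer but perhaps cleaner way to handle the same convergence issue.
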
 
\begin{proof}	
	By the Fourier series representation for $L^2_{\mathrm{per}}(\square)$ functions,  $\gamma = \langle \gamma \rangle  + \sum_{n \neq 0} \gamma_n e^{\i 2\pi n \cdot y}$ for some constants $\gamma_n$. 
	Now, since $ \mathcal{P}_\ep \phi = : \psi \in L^2(\RR^d)$ and $ \gamma(\tfrac{\cdot}{\ep})\psi \in L^2(\RR^d)$ then $\psi \sum_{n\neq 0} \gamma_n e^{\i 2\pi n \cdot \frac{x}{\ep}} \in L^2(\RR^d)$ and, consequently,
	\[
		\mathcal{F} (\gamma(\tfrac{\cdot}{\ep}) \psi)= \langle \gamma \rangle   \mathcal{F} \psi + \mathcal{F} \left(  \psi\sum_{n\neq 0} \gamma_n e^{\i 2\pi n \cdot \frac{\cdot}{\ep}} \right).
	\]
	Furthermore, by the properties of the Fourier transform 
	\[
	\mathcal{F} \left(  \psi\sum_{n\neq 0} \gamma_n e^{\i 2\pi n \cdot \frac{\cdot}{\ep}} \right) = \sum_{n\neq 0}  \gamma_n \mathcal{F} \big( \psi e^{\i 2\pi n \cdot \frac{\cdot}{\ep}} \big)  = \sum_{n\neq 0}  \gamma_n \mathcal{F}( \psi)(\cdot - 2 \pi n / \ep) .
		\]
	Now,	 since $\mathcal{F} \psi$ has support in $\ep^{-1} \square^*$ then  $(\mathcal{F}\phi)(\cdot - 2\pi n / \ep)  \equiv 0$ in  $\ep^{-1} \square^*$, for all $n \neq 0$, and
	one has
	\[
		\chi(\ep \cdot)\mathcal{F} \left(  \psi\sum_{n\neq 0} \gamma_n e^{\i 2\pi n \cdot \frac{\cdot}{\ep}} \right) \equiv 0,
	\]
	where we recall that $\chi$ is the characteristic function of $\square^*$. Additionally, 
\[
	\chi(\ep \cdot)  \mathcal{F} \psi =   \mathcal{F} \psi.
\]
Hence, 
	\[
	\chi(\ep \cdot)\mathcal{F} (\gamma(\tfrac{\cdot}{\ep}) \psi)) =  \langle \gamma \rangle    \mathcal{F} \psi= \mathcal{F}( \langle \gamma \rangle  \psi),
	\]
	and (recalling that $\mathcal{P}_\ep = \mathcal{F}^{-1} \chi(\ep \cdot) \mathcal{F}$) taking the inverse Fourier transform completes the proof.
\end{proof}
For each $j\in \{1,\ldots,d\}$, the function $\gamma = a(e_j + \nabla N^j_0)$  gives $\langle \gamma \rangle = a_0 e_j$ and consequently,  by Proposition \ref{mv}  with $\phi =   \partial_{x_j} u_0$,  gives
\[
	\mathcal{P}_\ep \big( a(\tfrac{\cdot}{\ep} ) ( I+ \nabla N_0(\tfrac{\cdot}{\ep})  )  \nabla \mathcal{P}_\ep  u_0  \big)  = a_0 \nabla \mathcal{P}_\ep u_0.
\]
Furthermore,   \eqref{Pe1} and \eqref{u0reg} imply
\[
\|a_0 \nabla  u_0 - a_0 \nabla \mathcal{P}_\ep u_0 \|_{L^2_\nu(\RR;L^2((\RR^d);\CC^d))}  \lesssim \|\Dt^2 f \|_{L^2_\nu(\RR;L^2(\RR^d))},
\]
thus
\begin{equation}\label{7.7.23e10}
\|	\mathcal{P}_\ep \big( a(\tfrac{\cdot}{\ep} ) ( I+ \nabla N_0(\tfrac{\cdot}{\ep})  )  \nabla \mathcal{P}_\ep  u_0  \big) - a_0 \nabla   u_0 \|_{L^2_\nu(\RR;L^2((\RR^d);\CC^d))}  \lesssim \|\Dt^2 f \|_{L^2_\nu(\RR;L^2((\RR^d)))}.
\end{equation}
Since $ a(\tfrac{\cdot}{\ep} ) ( I+ \nabla N_0(\tfrac{\cdot}{\ep})  )  \nabla \mathcal{P}_\ep  u_0  $ only belongs to $L^2$ with the  bound
\begin{equation}\label{7.7.23added}
	\| a(\tfrac{\cdot}{\ep} ) ( I+ \nabla N_0(\tfrac{\cdot}{\ep})  )  \nabla \mathcal{P}_\ep  u_0  \|_{L^2_\nu(\RR;L^2((\RR^d);\CC^d))}  \lesssim \ep \|\Dt^2 f \|_{L^2_\nu(\RR;L^2((\RR^d)))}
\end{equation}
(see \eqref{multiplerest} and \eqref{u0reg}), we must estimate  $I- \mathcal{P}_\ep$ in a weaker norm. Indeed, using the Fourier transform and \eqref{Pe1} one readily has
\begin{equation*}
\|h  - \mathcal{P}_\ep h \|_{H^{-1}((\RR^d)))}  \lesssim \ep \| h \|_{L^2(\RR^d)} \quad ( h \in L^2(\RR^d)),
\end{equation*}
which, along with \eqref{7.7.23added}, gives
\begin{multline}\label{7.7.23e11}
	\|	\big( a(\tfrac{\cdot}{\ep} ) ( I+ \nabla N_0(\tfrac{\cdot}{\ep})  )  \nabla \mathcal{P}_\ep  u_0     - 	\mathcal{P}_\ep \big( a(\tfrac{\cdot}{\ep} ) ( I+ \nabla N_0(\tfrac{\cdot}{\ep})  )  \nabla \mathcal{P}_\ep  u_0  \big) \|_{L^2_\nu(\RR;H^{-1}(\RR^d;\CC^d))} \\ \lesssim \ep \|\Dt^2 f \|_{L^2_\nu(\RR;L^2((\RR^d)))}.
\end{multline}
Now, \eqref{7.7.23e1}, \eqref{7.7.23e11} and \eqref{7.7.23e10} imply \eqref{7.6.23m3}.

\section{Heat equation}\label{s.h}
In this section, we explore the techniques just developed in the context of the heat equation. Thus, we consider the heterogeneous heat equation with rapidly oscillating coefficients:
\begin{equation}\label{h1}
	\partial_{t} u_\ep - \div_x ( b(\tfrac{x}{\ep}) \nabla_x u_\ep ) = f, \quad t\in \RR ,\, x\in \RR^d,
\end{equation}
where $\ep>0$ is a fixed parameter (the small period), $b$ is a periodic function with periodicity cell $\square \coloneqq [-\tfrac{1}{2}, \tfrac{1}{2})^d$, $f$ is given and $u_\ep$ is the unknown.

Similar to Section \ref{s.w}, we follow \cite[p 91]{STW22}, to observe that  if $u_\ep$ satisfies \eqref{h1} the vector $U^\ep = ( u_\ep, b(\tfrac{x}{\ep})  \nabla  u_\ep)^\top$ equivalently solves the evolutionary heat system   
\begin{equation}
	\label{hs1}
	\Bigg( \left( \begin{matrix} \Dt & 0 \\ 0 & b^{-1}(\tfrac{x}{\ep})
	\end{matrix} \right) + \left( \begin{matrix} 0 & -\div_x \\ -\nabla_x & 0
	\end{matrix} \right) \Bigg) U^\ep = \left( \begin{matrix} f \\ 0
	\end{matrix} \right) \quad t\in \RR,\, x\in \RR^d.
\end{equation}

Also, as in Section \ref{s.w}, we assume for the rapidly oscillating coefficient $b\in L^\infty(\mathbb{\square}; L(\mathbb{C}^d)$ with $\Re b(y) \xi \cdot \overline{\xi} \ge \kappa |\xi|^2$ for some $\kappa>0$ and all $\xi \in \RR^d$ and (almost all) $y \in \square$
or, equivalently, the same estimates for $b(\cdot)^{-1}$, see Remark \ref{rem:coerc}.

If $f \in L^2_\nu(\RR; L^2(\RR^d))$ then by \cite[Theorem 6.2.4]{STW22} for each $\nu>0$, there exists a unique solution\footnote{In fact,  $U^\ep$ is known to have additional regularity, see Lemma \ref{p.reg} below.} $U^\ep \in H^1_\nu(\RR; L^2(\RR^d)) \cap L^2_\nu(\RR; {\rm dom}\,  \left( \begin{matrix} 0 & -\div \\ -\nabla & 0
\end{matrix} \right))$ to the evolutionary heat system \eqref{hs1}.

We shall argue as in Section \ref{s.wass} to prove, below, the following homogenisation theorem. 

\begin{theorem}\label{hEstthm}
	Let $\ep>0, \t \in \square^* , \l=\nu+ik, k\in \RR$,  and $F\in L^2_{\per}(\square; \CC \times \CC^{d})$. Consider the solution 
	$U^\ep_{\l,\t}$
	to 
	\begin{equation}\label{heatP1}
		\Bigg(  \left( \begin{matrix} \l & 0 \\ 0 & b^{-1}
		\end{matrix} \right) + \ep^{-1} \left( \begin{matrix} 0 & -(\div + \i \t \cdot) \\ -(\nabla + \i \t) & 0
		\end{matrix} \right) \Bigg) U^\ep_{\l,\t}= \left( \begin{matrix} F \\ 0
		\end{matrix} \right),
	\end{equation} and 
	\begin{equation}\label{hp.Wrep}
		W_{\l,\xi} =  c_0 \left( \begin{matrix}
			1 \\  b(I+ \nabla N_0 )	\i \xi
		\end{matrix} \right), \quad c_0 = (\l + b_0 \xi \cdot \xi )^{-1} \langle F,1 \rangle_{L^2(\square)}, \quad \xi \in \RR.
	\end{equation}
	Then, there exists $C>0$ uniformly in $\ep, \l, \t$ and $F$ such that
	\begin{equation}
		\label{h.mainest}
		\| \l^{1/2} (U^\ep_{\l,\t})_1 - \l^{1/2}(W_{\l,\t/\ep})_1 \|_{L^2(\square)}  +		\| (U^\ep_{\l,\t})_2 - (W_{\l,\t/\ep})_2 \|_{L^2(\square; \CC^{d})} \le C \ep \| F\|_{L^2(\square)}.
	\end{equation}
\end{theorem}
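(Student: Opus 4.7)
I would follow the same three-step scheme used for the wave case in Section \ref{s.wass}: (i) approximate $U^\ep_{\l,\t}$ by a solution $V^\ep_{\l,\t}$ of the problem projected onto $E_\t$; (ii) compute an explicit representation of $V^\ep_{\l,\t}$ in terms of the cell corrector; and (iii) compare $V^\ep_{\l,\t}$ to $W_{\l,\t/\ep}$ using the asymptotics of the cell correctors as $\t\to 0$. The only structural change from the wave case is that the matrix $\l\cdot\mathrm{diag}(1, a^{-1})$ is replaced by $\mathrm{diag}(\l, b^{-1})$, so that $\l$ acts on the first component alone. This asymmetry is precisely what is responsible for the asymmetric $|\l|^{1/2}$-weighting in \eqref{h.mainest} and, ultimately, for the lower temporal regularity of $f$ required in the time-domain heat estimate compared to the wave case.

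For Step (i), I introduce $V^\ep_{\l,\t}\in E_\t$ as the unique solution of
$$P_\t \left(\begin{matrix} \l & 0 \\ 0 & b^{-1}\end{matrix}\right) V^\ep_{\l,\t} + \ep^{-1} A_\t^{(1)} V^\ep_{\l,\t} = P_\t (F, 0)^\top,$$
with $A_\t^{(1)}$ as in Proposition \ref{prop.diagA}. Splitting $U^\ep_{\l,\t} = U^{(1)} + U^{(2)}$ via $I = P_\t + P_\t^\perp$ and proceeding as in Proposition \ref{prop.W1stest}, one arrives at $U^{(2)} = \ep (A_\t^{(2)})^{-1}(P_\t^\perp(F,0)^\top - P_\t^\perp \mathrm{diag}(\l, b^{-1}) U^\ep_{\l,\t})$ and hence, by \eqref{A2ubi},
$$\|U^{(2)}\|_{L^2(\square;\CC^{d+1})} \lesssim \ep\big(\|F\| + |\l|\,\|P_1^\perp U^\ep_{\l,\t,1}\| + \|U^\ep_{\l,\t,2}\|\big).$$
The standard energy estimate gives $\|U^\ep_{\l,\t,2}\|\lesssim\|F\|$, while the second equation of \eqref{heatP1}, namely $(\nabla + \i \t)U^\ep_{\l,\t,1} = \ep b^{-1} U^\ep_{\l,\t,2}$, combined with a Poincaré-type bound on the mean-zero subspace on which $P_1^\perp U^\ep_{\l,\t,1}$ lives, produces an $O(\ep)$ estimate on $\|P_1^\perp U^\ep_{\l,\t,1}\|$ sufficient to compensate the factor of $|\l|$. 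Together these yield the first-approximation estimate $\|\l^{1/2}(U^{(1)}_1 - V^\ep_{\l,\t,1})\| + \|U^{(1)}_2 - V^\ep_{\l,\t,2}\| \lesssim \ep \|F\|$. For Step (ii), a verbatim repetition of the computation in Proposition \ref{Vrepresentation} with $a$ replaced by $b$ and with the temporal factor $\l^2$ replaced by $\l$ gives
$$V^\ep_{\l,\t} = c \left(\begin{matrix} 1 \\ b(I+\nabla N_\t + \i\t \otimes N_\t)\frac{\i \t}{\ep}\end{matrix}\right), \quad c = \big(\l + b_\t \tfrac{\t}{\ep}\cdot\tfrac{\t}{\ep}\big)^{-1}\langle F, 1\rangle_{L^2(\square)},$$
where $N_\t, b_\t$ are the cell corrector and homogenised matrix associated to $b$.

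For Step (iii), I would mimic Proposition \ref{p.VW}, using Proposition \ref{Ntasym} (which applies verbatim to $b$) to control $\nabla N_\t - \nabla N_0$ and $b_\t - b_0$. The parabolic dispersion quantities now become
$$d(t) = \frac{t}{|\l + t|}, \qquad e(t) = \frac{t}{|\l + t^2|}, \quad t \ge 0,$$
which for $\Re \l = \nu > 0$ satisfy $d(t) \le 1$ (from $|\l + t| \ge t$) and, by optimising over $t$ at the critical point $t = |\l|^{1/2}$, $e(t) \lesssim |\l|^{-1/2}$. Substituting these into the algebraic expansion of $c - c_0$ produces $|c - c_0| \lesssim \ep |\l|^{-1/2}\|F\|$, while $\tfrac{|\t|^2}{\ep}|c| \lesssim \ep \|F\|$ and $\tfrac{|\t|}{\ep}|c - c_0| \lesssim \ep \|F\|$. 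These bounds immediately yield the asymmetric estimate \eqref{h.mainest}: the $|\l|^{1/2}$-prefactor on the first component precisely cancels the $|\l|^{-1/2}$ arising from $e(t)$.

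The main obstacle is in Step (i): obtaining a $|\l|$-uniform bound on $\|U^{(2)}\|$. In the wave case, a growth $\ep|\l|\|F\|$ was sufficient because $\l$ multiplied the entire mass matrix and could be absorbed in the energy identity by pairing with $\l M (U^{(1)} - V)$; here $\l$ acts only on the first component, so the analogous route loses a factor of $|\l|$ that cannot be absorbed in \eqref{h.mainest}. The resolution is structural: the second equation $(\nabla + \i \t)U^\ep_{\l,\t,1} = \ep b^{-1} U^\ep_{\l,\t,2}$ together with mean-zero Poincaré (equivalently, the parabolic maximal regularity of Lemma \ref{p.reg} at the fibre level) upgrades the a priori bound on the mean-zero part $P_1^\perp U^\ep_{\l,\t,1}$ to $O(\ep)$, thereby compensating the missing $|\l|^{-1}$ and making the whole argument uniform in $\l$.
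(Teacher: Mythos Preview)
Your Steps (ii) and (iii) are essentially the paper's Propositions \ref{hVrepresentation} and \ref{hp.VW}, and your treatment of the functions $d(t)=t/|\l+t|$, $e(t)=t/|\l+t^2|$ is correct. The gap is in Step (i).

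Your route --- invert $A_\t^{(2)}$ and bound the right-hand side --- produces the term $\ep|\l|\,\|P_1^\perp U_1\|$. Your Poincar\'e argument is in fact valid uniformly in $\t$ (for $n\neq 0$ one has $|2\pi n+\t|\ge\pi$, so $\|P_1^\perp U_1\|\lesssim\|(\nabla+\i\t)U_1\|=\ep\|b^{-1}U_2\|\lesssim\ep\|F\|$), but this only reduces the offending term to $\ep^2|\l|\,\|F\|$, which is \emph{not} uniform in $\l$. So ``$O(\ep)$ compensates the missing $|\l|^{-1}$'' is false as stated: you would need $\|P_1^\perp U_1\|=O(|\l|^{-1})$, not $O(\ep)$, and the second equation alone does not give that.

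The paper's resolution is different and structural: it never moves $\l U^{(2)}_1$ to the right-hand side. Instead it applies Lemma \ref{p.reg} to each of the three sub-problems (the full system, the $E_\t^\perp$-equation for $U^{(2)}$, and the $E_\t$-equation for $U^{(1)}-V$) as well-posed parabolic problems in their own right. The key points you are missing are: (a) Lemma \ref{p.reg} on the full system yields the sharper bound $\|\l^{1/2}U_2\|\lesssim\|F\|$, not just $\|U_2\|\lesssim\|F\|$; (b) in the $E_\t^\perp$-equation written as $(P_\t^\perp M+\ep^{-1}A_\t^{(2)})U^{(2)}=P_\t^\perp(F,0)^\top-P_\t^\perp MU^{(1)}$, the first component of the right-hand side carries \emph{no} $\l$ (since $U^{(1)}_1$ is constant, $P_1^\perp(\l U^{(1)}_1)=0$); (c) Lemma \ref{p.reg} applied to this sub-problem then gives $\|\l^{1/2}\ep^{-1}(\nabla+\i\t)U^{(2)}_1\|+\|\ep^{-1}(\div+\i\t\cdot)U^{(2)}_2\|\lesssim\|F\|+\|\l^{1/2}U_2\|\lesssim\|F\|$, and \eqref{A2ubi} converts this into $\|\l^{1/2}U^{(2)}_1\|+\|U^{(2)}_2\|\lesssim\ep\|F\|$. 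This is the $\l$-uniform estimate you need; the same device on the difference equation then yields $\|\l^{1/2}(U^{(1)}_1-V_1)\|+\|U^{(1)}_2-V_2\|\lesssim\|U^{(2)}_2\|\lesssim\ep\|F\|$.
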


Upon setting $F : =  \GT  \RS \FL f$ in Theorem \ref{hEstthm}, one has $U^\ep : = \FL^{-1} \Gamma^{-1}_\ep \GT^{-1} U^{\ep}_{\l,\t}$ (see Section \ref{s.w} for details)  and, arguing exactly as in Section \ref{ss1.2proof}, we  establish that
\[
\FL^{-1} \Gamma^{-1}_\ep \GT^{-1} W_{\l,\t/\ep} = \left( \begin{matrix}
	\P_\ep u_0 \\ b(\tfrac{\cdot}{\ep}) (I+ \nabla N_0(\tfrac{\cdot}{\ep})) \nabla \P_\ep u_0
\end{matrix} \right)
\]
for $u_0$ being the solution to the homogenised heat equation
	\begin{equation}\label{hhomeqn}
	\Dt u_0 - {\rm div}_x b_0 \nabla_x u_0 = f,
\end{equation}
where $b_0$ is the homogenised coefficients for $b$. Then, via the properties of $\P_\ep$, $N_0$ and $u_0$ we readily  establish Theorem \ref{hhomthm}.
\begin{theorem}\label{hhomthm}
	Fix $\ep>0$ and $f \in L^2_\nu(\RR; L^2(\RR^d))$,  and suppose $b$ satisfies \eqref{coeffass} replacing $a$.	Then $u_\ep$ the solution to \eqref{h1} and  $u_0$ the solution to \eqref{hhomeqn}	satisfy:
	\begin{align}
		\| \Dt^{1/2} u_\ep - \Dt^{1/2}  u_0 \|_{L^2_\nu(\RR;L^2(\RR^d))} \le C \ep \| f\|_{L^2_\nu(\RR;L^2(\RR^d))}, \label{th 2.2.1}\\
		\| b(\tfrac{\cdot}{\ep}) \nabla u_\ep  - b(\tfrac{\cdot}{\ep} )\nabla ( u_0 + \ep N_0(\tfrac{\cdot}{\ep}) \cdot  \nabla  u_0)  \|_{L^2_\nu(\RR;L^2(\RR^d;\RR^d))} \le C \ep \|  f \|_{L^2_\nu(\RR;L^2(\RR^d))}, \label{th 2.2.2}\\
		\| b(\tfrac{\cdot}{\ep}) \nabla u_\ep  - b_0   \nabla  u_0 \|_{L^2_\nu(\RR;H^{-1}(\RR^d;\RR^d))} \le C \ep \| f \|_{L^2_\nu(\RR;L^2(\RR^d))}\label{th 2.2.3}, 
	\end{align}
	for some $C>0$ independent of $\ep$ and $f$. 
\end{theorem}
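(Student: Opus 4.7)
The proof proposal is a direct mimic of the wave-equation argument in Section \ref{ss1.2proof}, with the parabolic maximal regularity (cf.\ Lemma \ref{p.reg}) replacing the cruder $f$-regularity used there. First I would invert the three transforms (Laplace, spatial rescaling, Gelfand) in Theorem \ref{hEstthm}: by unitarity of $\FL$ and $\GT$, the change-of-variables character of $\RS$, together with \eqref{tp1}--\eqref{tp4}, \eqref{GvsF}, \eqref{GvsF2}, the right-hand side vector $W_{\l,\t/\ep}$ transforms to
\[
W^\ep = \FL^{-1} \RS^{-1} \GT^{-1} W_{\l,\t/\ep} = \big(\mathcal{P}_\ep u_0,\, b(\tfrac{\cdot}{\ep})(I+\nabla N_0(\tfrac{\cdot}{\ep}))\nabla \mathcal{P}_\ep u_0\big)^\top,
\]
where $u_0$ solves \eqref{hhomeqn} and $\mathcal{P}_\ep = \FT^{-1}\chi(\ep\cdot)\FT$. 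The $\l^{1/2}$ in the first slot of \eqref{h.mainest} is then reinterpreted as $\Dt^{1/2}$ after applying $\FL^{-1}$. In this way \eqref{h.mainest} yields the ``pre-postprocessing'' bound
\[
\| \Dt^{1/2} u_\ep - \Dt^{1/2} \mathcal{P}_\ep u_0 \|_{L^2_\nu(\RR;L^2(\RR^d))} + \| b(\tfrac{\cdot}{\ep})\nabla u_\ep - b(\tfrac{\cdot}{\ep})(I+\nabla N_0(\tfrac{\cdot}{\ep}))\nabla \mathcal{P}_\ep u_0 \|_{L^2_\nu(\RR;L^2(\RR^d;\RR^d))} \lesssim \ep \| f \|_{L^2_\nu(\RR;L^2(\RR^d))}.
\]

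For \eqref{th 2.2.1}, since $\mathcal{P}_\ep$ commutes with $\Dt^{1/2}$, the standard Fourier-multiplier estimate \eqref{Pe1} gives
\[
\| \Dt^{1/2}\mathcal{P}_\ep u_0 - \Dt^{1/2} u_0 \|_{L^2_\nu(\RR;L^2)} \lesssim \ep \|\nabla \Dt^{1/2} u_0\|_{L^2_\nu(\RR;L^2)}.
\]
The parabolic maximal regularity of Lemma \ref{p.reg} supplies $\|\nabla \Dt^{1/2} u_0\|_{L^2_\nu(\RR;L^2)} \lesssim \|f\|_{L^2_\nu(\RR;L^2)}$, which at the symbol level is precisely uniform boundedness of $|\l|^{1/2}|\xi|/|\l+b_0\xi\cdot\xi|$ on $\{\Re\l\ge\nu\}\times\RR^d$ under the coercivity of $b_0$. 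Combining with the triangle inequality yields \eqref{th 2.2.1}. This step is where the improved right-hand side $\|f\|$ (rather than $\|\Dt^2 f\|$ as in the wave case) ultimately comes from; securing it is the main point of the proof.

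For \eqref{th 2.2.2}, I copy the algebraic identity from the wave case: since $\ep\nabla(N_0(\tfrac{\cdot}{\ep})\cdot\nabla \mathcal{P}_\ep u_0) = \nabla N_0(\tfrac{\cdot}{\ep})\nabla \mathcal{P}_\ep u_0 + \ep N_0(\tfrac{\cdot}{\ep})\nabla^2 \mathcal{P}_\ep u_0$, we have
\[
b(\tfrac{\cdot}{\ep})(I+\nabla N_0(\tfrac{\cdot}{\ep}))\nabla \mathcal{P}_\ep u_0 = b(\tfrac{\cdot}{\ep})\nabla(\mathcal{P}_\ep u_0 + \ep N_0(\tfrac{\cdot}{\ep})\cdot \nabla \mathcal{P}_\ep u_0) - \ep\, b(\tfrac{\cdot}{\ep})N_0(\tfrac{\cdot}{\ep}) \nabla^2 \mathcal{P}_\ep u_0.
\]
Boundedness of $b$ and $N_0$ (De~Giorgi--Nash) together with the $L^2$-contractivity of $\mathcal{P}_\ep$ and the maximal-regularity bound $\|u_0\|_{L^2_\nu(\RR;H^2)} \lesssim \|f\|_{L^2_\nu(\RR;L^2)}$ control the last term by $\ep\|f\|$. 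The resulting estimate is then upgraded by replacing $\nabla \mathcal{P}_\ep u_0$ with $\nabla u_0$ in the corrector term, using \eqref{Pe1} for the $b(\tfrac{\cdot}{\ep})$ piece and the Zhikov--Pastukhova multiplier estimate \eqref{multiplerest} for the $\nabla N_0(\tfrac{\cdot}{\ep})$ piece, exactly as between \eqref{7.7.23e7} and \eqref{7.7.23e7b}.

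For \eqref{th 2.2.3}, I apply Proposition \ref{mv} componentwise to $\gamma = b(e_j + \nabla N_0^j)$, for which $\langle\gamma\rangle = b_0 e_j$; this gives the identity $\mathcal{P}_\ep\big(b(\tfrac{\cdot}{\ep})(I+\nabla N_0(\tfrac{\cdot}{\ep}))\nabla\mathcal{P}_\ep u_0\big) = b_0\nabla \mathcal{P}_\ep u_0$. The elementary bound $\|h-\mathcal{P}_\ep h\|_{H^{-1}(\RR^d)} \lesssim \ep\|h\|_{L^2(\RR^d)}$, applied to $h = b(\tfrac{\cdot}{\ep})(I+\nabla N_0(\tfrac{\cdot}{\ep}))\nabla \mathcal{P}_\ep u_0$ whose $L^2$-norm is $\lesssim \|f\|$ by \eqref{multiplerest} and maximal regularity, together with \eqref{Pe1} to replace $\nabla \mathcal{P}_\ep u_0$ by $\nabla u_0$ on the right, closes the argument. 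The chief obstacle throughout is the parabolic regularity input: once Lemma \ref{p.reg} is in hand, each postprocessing step is a routine adaptation of Section \ref{ss1.2proof}.
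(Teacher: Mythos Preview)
Your proposal is correct and follows essentially the same route as the paper, which explicitly defers to Section~\ref{ss1.2proof} (``arguing exactly as in Section~\ref{ss1.2proof}'' and ``the remaining claims of Section~\ref{s.h} can be shown in a completely analogous manner''). You correctly identify the one substantive change: the parabolic maximal regularity of Lemma~\ref{p.reg} for the homogenised system supplies $\|\Dt^{1/2}\nabla u_0\|\lesssim\|f\|$ and $\|u_0\|_{L^2_\nu(\RR;H^2)}\lesssim\|f\|$, which is what replaces \eqref{u0reg} and accounts for the sharper right-hand side.
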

Similarly to the case of the wave equation, we employ that  $\Dt$ is boundedly invertible in $L^2_\nu(\RR^d; L^2(\RR^d))$ for $\nu>0$, \eqref{th 2.2.1}, that (the real part of) $b$ is uniformly positive with \eqref{th 2.2.2} as well as \eqref{th 2.2.1} and \eqref{th 2.2.3} to deduce the following set of estimates.
\begin{corollary}Under the assumptions of Theorem \ref{hhomthm},
\begin{align*}
\|   u_\ep - u_0  \|_{L^2_\nu(\RR;L^2(\RR^d))} &\le C\ep \| \Dt^{-1/2} f\|_{L^2_\nu(\RR;L^2(\RR^d))},\\
\|   u_\ep - (u_0 + \ep N_0(\tfrac{\cdot}{\ep}) \cdot \nabla u_0) \|_{L^2_\nu(\RR;H^1(\RR^d))} &\le C\ep \| f\|_{L^2_\nu(\RR;L^2(\RR^d))} \\
\Big(\|   U^\ep_1 - U^0_1  \|_{L^2_\nu(\RR;L^2(\RR^d))}^2 + 	\| U^\ep_2 - U^0_2 \|_{L^2_\nu(\RR;H^{-1}(\RR^d;\RR^d))}^2\Big)^{1/2} & \le C \ep \| f\|_{L^2_\nu(\RR;L^2(\RR^d))},
\end{align*}
with $C\geq 0$ independent of $\ep$ and $f$.
\end{corollary}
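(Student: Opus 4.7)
All three estimates are straightforward consequences of Theorem \ref{hhomthm}, combined with two elementary facts: first, that the fractional power $\Dt^{\alpha}$ (defined via $\FL$ as multiplication by $\l^{\alpha}$ in the Laplace variable) is bounded on $L^2_\nu(\RR;H)$ for any $\alpha \le 0$ and any Hilbert space $H$, with in particular $\|\Dt^{-1/2}\|_{L(L^2_\nu(\RR;H))}\le \nu^{-1/2}$; second, that by linearity of \eqref{h1} and \eqref{hhomeqn} the operator $\Dt^{\alpha}$ maps the pair of solutions $(u_\ep,u_0)$ with right-hand side $f$ to the pair of solutions with right-hand side $\Dt^{\alpha} f$.

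For the first estimate, the plan is to apply \eqref{th 2.2.1} to $\Dt^{-1/2}u_\ep$ and $\Dt^{-1/2}u_0$, which solve \eqref{h1} and \eqref{hhomeqn} with right-hand side $\Dt^{-1/2}f$; this immediately yields
\[
\|u_\ep - u_0\|_{L^2_\nu(\RR;L^2(\RR^d))} = \|\Dt^{1/2}(\Dt^{-1/2}u_\ep - \Dt^{-1/2}u_0)\|_{L^2_\nu(\RR;L^2(\RR^d))} \le C\ep \,\|\Dt^{-1/2} f\|_{L^2_\nu(\RR;L^2(\RR^d))}.
\]
The third estimate then follows directly: its first component is handled by applying $\Dt^{-1/2}$ to \eqref{th 2.2.1} and using the operator bound $\|\Dt^{-1/2}\|\le \nu^{-1/2}$ to get $\|u_\ep - u_0\|_{L^2_\nu(\RR;L^2(\RR^d))}\le C\ep\|f\|_{L^2_\nu(\RR;L^2(\RR^d))}$, while its second component is precisely \eqref{th 2.2.3}; squaring, adding and taking square roots finishes.

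The second estimate is the only one requiring slightly more care. The plan is to split the $H^1$-norm into its $L^2$ and gradient pieces. For the $L^2$ piece, the triangle inequality reduces the bound to $\|u_\ep - u_0\|_{L^2_\nu(\RR;L^2(\RR^d))}$ (already handled above) plus $\ep\|N_0(\tfrac{\cdot}{\ep})\cdot \nabla u_0\|_{L^2_\nu(\RR;L^2(\RR^d))}$, the latter controlled by the $L^\infty$-boundedness of $N_0$ (De Giorgi--Nash) and the standard energy estimate $\|\nabla u_0\|_{L^2_\nu(\RR;L^2(\RR^d))} \lesssim \|f\|_{L^2_\nu(\RR;L^2(\RR^d))}$ for the homogenised heat equation. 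For the gradient piece, the key identity (from the product rule, together with $\nabla_x[\ep N_0(\tfrac{x}{\ep})] = (\nabla N_0)(\tfrac{x}{\ep})$) is
\[
b(\tfrac{\cdot}{\ep})\nabla\bigl(u_\ep - u_0 - \ep N_0(\tfrac{\cdot}{\ep})\cdot \nabla u_0\bigr) = b(\tfrac{\cdot}{\ep})\nabla u_\ep - b(\tfrac{\cdot}{\ep})\nabla\bigl(u_0 + \ep N_0(\tfrac{\cdot}{\ep})\cdot \nabla u_0\bigr),
\]
whose norm is precisely the left-hand side of \eqref{th 2.2.2}. To transfer this from $b(\tfrac{\cdot}{\ep})\nabla(\cdot)$ back to $\nabla(\cdot)$, one invokes the coercivity of $b$ (equivalent to the $L^\infty$-boundedness of $b^{-1}$, cf.\ Remark \ref{rem:coerc}), which yields the pointwise bound $|v|\le \kappa^{-1}|b(\tfrac{\cdot}{\ep})v|$ and hence the desired norm inequality. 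No genuine obstacle arises; the only substantive point, and the one the author explicitly flags, is that (unlike in the analogous corollary for the wave equation in Theorem \ref{whomthm}) the multiplier property of $N_0$ used in the proof of \eqref{7.6.23m2} is not needed here, since the coercivity of $b$ already allows one to absorb $b(\tfrac{\cdot}{\ep})$ into the gradient estimate without separately controlling $\nabla N_0(\tfrac{\cdot}{\ep})\nabla u_0$.
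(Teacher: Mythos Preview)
Your proposal is correct and matches the paper's approach: the paper gives no separate proof but simply states that the corollary follows from Theorem \ref{hhomthm} via the bounded invertibility of $\Dt$ for the first and third estimates and the uniform positivity of $b$ together with \eqref{th 2.2.2} for the second, exactly as you spell out. Your closing parenthetical about the multiplier property is a slight misreading---the paper does not flag any such contrast, and the analogous wave-equation corollary likewise uses only coercivity of $a$ on top of \eqref{7.6.23m2}---but this aside does not affect the argument.
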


Next, we turn to the proofs of the main results of this section.

\subsection{Proof of Theorem \ref{hEstthm}}\label{s.hass}

The proof of Theorem \ref{hEstthm}, in essence, goes as in Section \ref{s.wass} with the only difference being that one has better estimates in $\l$ due to the improved regularity of the solution to the evolutionary heat system compared to that of the evolutionary wave system. Indeed, these better estimates will rely on the  following maximal regularity result for evolutionary parabolic equations, first established and proved in \cite[Section 3.1]{TrWa21}; see also \cite[Example 15.24.4]{STW22} or \cite{PTW17} for a first account on maximal regularity for evolutionary equations in the sense of \cite{STW22}.
\begin{lemma}\label{p.reg}
	Let $H_1, H_2$ be  Hilbert spaces, suppose $C: {\rm dom}\, C \subseteq H_2 \rightarrow H_1$ be a densely defined linear operator, and $\mathfrak{n} \in L({H_2})$ satisfy
	\begin{equation}\label{L.h1}
		{\rm Re}\, \langle \mathfrak{n} \phi, \phi  \rangle_{{H_2}} \ge \kappa \| \phi \|^2_{{H_2}},  \qquad  (\phi \in {H_2}),
	\end{equation}
for some $\kappa>0$. Then, for fixed $\mathfrak{f} = (f_1,f_2) \in {H}_1\times H_2$, $ \l = \nu + i k, k \in \RR$,  the solution $U= (u_1,u_2)$ to 
\[
\Bigg( \left( \begin{matrix} \l & 0 \\ 0&\mathfrak{n} 
\end{matrix} \right) + \left( \begin{matrix} 0 & C \\-C^* & 0
\end{matrix} \right)   \Bigg) U = \mathfrak{f},
\]
satisfies
	\begin{multline}
		\label{Preg}
		\| \l u_1 \|_{H_1} + \|\l^{1/2} u_2 \|_{H_2} +  \|\l^{1/2} C^* u_1 \|_{H_2}  +  \| C u_2 \|_{H_1} \\ \lesssim \| f_1 \|_{H_1} + \| \l^{1/2} f_2 \|_{H_2}, \quad (\l = \nu + i k, k \in \RR).
	\end{multline}
\end{lemma}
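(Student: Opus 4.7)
The plan is to reduce the $2\times 2$ block system on $H_1\times H_2$ to a single equation on $H_1$ by eliminating $u_2$, and then to derive all of the required bounds by testing with $u_1$ together with a careful absorption argument exploiting the sectoriality of the resulting operator. First, since the coercivity hypothesis \eqref{L.h1} is equivalent (cf.~Remark~\ref{rem:coerc}) to $\mathfrak{n}^{-1}\in L(H_2)$ with $\Re\,\langle \mathfrak{n}^{-1}v,v\rangle_{H_2}\geq \kappa'\|v\|_{H_2}^2$ for some $\kappa'>0$, I would solve the second equation for $u_2$, namely $u_2=\mathfrak{n}^{-1}(C^*u_1+f_2)$, and substitute into the first to obtain
\[
(\lambda+L)\,u_1 = f_1 - C\mathfrak{n}^{-1}f_2, \qquad L\coloneqq C\mathfrak{n}^{-1}C^*.
\]

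Testing this equation with $u_1$ in $H_1$ and writing $\alpha\coloneqq \langle \mathfrak{n}^{-1}C^*u_1,C^*u_1\rangle_{H_2}$ and $\beta\coloneqq \langle f_1,u_1\rangle_{H_1}-\langle \mathfrak{n}^{-1}f_2,C^*u_1\rangle_{H_2}$ gives the scalar identity $\lambda\|u_1\|^2+\alpha=\beta$. The coercivity and boundedness of $\mathfrak{n}^{-1}$ yield the sectorial bounds $\Re\alpha\geq \kappa'\|C^*u_1\|^2$ together with $|\alpha|\leq M_0\,\Re\alpha$, where the constant $M_0$ depends only on $\kappa$ and $\|\mathfrak{n}\|$. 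Combining these with $\lambda\|u_1\|^2=\beta-\alpha$ and separately taking the modulus and the real part produces the two inequalities
\[
|\lambda|\|u_1\|^2\leq(1+M_0)|\beta|, \qquad \nu\|u_1\|^2+\kappa'\|C^*u_1\|^2\leq|\beta|,
\]
with $|\beta|\leq \|f_1\|\|u_1\|+\|\mathfrak{n}^{-1}\|\|f_2\|\|C^*u_1\|$. A Young-inequality absorption of $\|f_1\|\|u_1\|$ against the $|\lambda|\|u_1\|^2$ term, and of $\|f_2\|\|C^*u_1\|$ against $|\lambda|\|C^*u_1\|^2$, together with the real-part inequality used to dominate $\|C^*u_1\|^2$ by a multiple of $|\lambda|\|u_1\|^2+\text{data}$, eventually yields
\[
\|\lambda u_1\|_{H_1}+\|\lambda^{1/2}C^*u_1\|_{H_2}\lesssim \|f_1\|_{H_1}+\|\lambda^{1/2}f_2\|_{H_2}.
\]

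Finally I would close the argument by back-substitution: $\|Cu_2\|_{H_1}=\|f_1-\lambda u_1\|\leq \|f_1\|+\|\lambda u_1\|$ from the first equation, and $\|\lambda^{1/2}u_2\|_{H_2}\leq \|\mathfrak{n}^{-1}\|(\|\lambda^{1/2}C^*u_1\|+\|\lambda^{1/2}f_2\|)$ from the second; both are controlled by the right-hand side of \eqref{Preg}. The hard part will be the absorption step itself: because $L$ is not self-adjoint, $|\alpha|$ is only bounded by a fixed multiple of $\Re\alpha$ rather than equal to it, so one must combine the modulus and real-part inequalities in precisely the right proportions and with carefully tuned Young constants in order to pull out the sharp homogeneity $\|f_1\|+|\lambda|^{1/2}\|f_2\|$ on the right, rather than the cruder $(1+|\lambda|)\|f_1\|+|\lambda|\|f_2\|$ that a naive argument produces. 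Separating these two weights without losing either is exactly what distinguishes maximal regularity from the plain energy estimate.
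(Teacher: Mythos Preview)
The paper does not actually prove Lemma~\ref{p.reg}; it quotes the result from \cite[Section~3.1]{TrWa21} (with further references to \cite{STW22,PTW17}) and uses it as a black box. So there is no in-paper proof to compare against.

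Your approach is nonetheless correct and is essentially the standard route to this kind of maximal-regularity estimate: eliminate $u_2$ via $u_2=\mathfrak{n}^{-1}(C^*u_1+f_2)$, test the resulting scalar equation $\lambda\|u_1\|^2+\alpha=\beta$ on $H_1$, and combine the real-part and modulus information using the sectoriality $|\alpha|\le M_0\,\Re\alpha$. One small clean-up: the absorption is more transparent than you make it sound. Adding your two displayed inequalities gives
\[
|\lambda|\,\|u_1\|^2+\kappa'\|C^*u_1\|^2 \;\le\; (2+M_0)\big(\|f_1\|\,\|u_1\|+\|\mathfrak{n}^{-1}\|\,\|f_2\|\,\|C^*u_1\|\big),
\]
and a single application of Young's inequality, absorbing $\|f_1\|\,\|u_1\|$ against $|\lambda|\,\|u_1\|^2$ and $\|f_2\|\,\|C^*u_1\|$ against $\kappa'\|C^*u_1\|^2$ (not against $|\lambda|\,\|C^*u_1\|^2$ as you wrote), yields $|\lambda|\,\|u_1\|^2+\|C^*u_1\|^2\lesssim |\lambda|^{-1}\|f_1\|^2+\|f_2\|^2$; multiplying through by $|\lambda|$ gives exactly $\|\lambda u_1\|+\|\lambda^{1/2}C^*u_1\|\lesssim \|f_1\|+\|\lambda^{1/2}f_2\|$ with constants independent of $k$. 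The back-substitution for $\|Cu_2\|$ and $\|\lambda^{1/2}u_2\|$ is then immediate, as you say. There is no genuine obstacle in the ``hard part'' you flag.
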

Let us  prove Theorem \ref{hEstthm}. For this recall $E_\theta$ from \eqref{E1}. The analogue of Proposition \ref{prop.W1stest} is as follows:
\begin{proposition}\label{prop.P1stest}
	Consider $U^\ep_{\l,\t}$ the solution to \eqref{heatP1}  and $V^\ep_{\l,\t}$ the solution to 
	\begin{equation}\label{heatP2}
		\Bigg(P_\t \left( \begin{matrix} \l  & 0 \\ 0&b^{-1} 
		\end{matrix} \right)   + \ep^{-1}  \left( \begin{matrix} 0 & -(\div + \i \t \cdot) \\ -(\nabla + \i \t) & 0
		\end{matrix} \right) \Bigg)V^\ep_{\l,\t} =P_\t  \left( \begin{matrix} F\\ 0
	\end{matrix} \right),
	\end{equation} 
	for
	$P_\t : L^2(\square;\CC^{d+1}) \rightarrow E_\t$ the orthogonal projection. Then, the following estimate holds: 
	\begin{equation}\label{hP.errest1}
		\| \l^{1/2} (U^\ep_{\l,\t})_1 - \l^{1/2} (V^\ep_{\l,\t})_1 \|_{L^2(\square)} + 	\| (U^\ep_{\l,\t})_2 - (V^\ep_{\l,\t})_2 \|_{L^2(\square;\CC^d)}\lesssim \ep  \| F\|_{L^2(\square)}.
	\end{equation}
\end{proposition}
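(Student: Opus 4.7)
The plan is to mirror the proof of Proposition \ref{prop.W1stest}, replacing the energy identity used there with the maximal regularity estimate of Lemma \ref{p.reg}; this upgrade is precisely what delivers the sharper $\lambda$-dependence required by \eqref{hP.errest1}.

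I first decompose $U^\ep_{\lambda,\theta} = U^{(1)}+U^{(2)}$, with $U^{(1)} = P_\theta U^\ep_{\lambda,\theta}$ and $U^{(2)} = P_\theta^\perp U^\ep_{\lambda,\theta}$, and split $A_\theta = A^{(1)}_\theta + A^{(2)}_\theta$ as in Proposition \ref{prop.diagA}. Since $P_\theta$ acts component-wise (Remark \ref{remEform}) and $U^{(2)}_1 \in \ker P_1$, one has $P_\theta\,\mathrm{diag}(\lambda, b^{-1}) U^{(2)} = (0, P_2 b^{-1} U^{(2)}_2)^\top$. Subtracting \eqref{heatP2} from the $P_\theta$-projected equation for $U^\ep_{\lambda,\theta}$ therefore gives the equation in $E_\theta$
\begin{equation*}
	\Bigg(\begin{pmatrix}\lambda & 0 \\ 0 & P_2 b^{-1}\end{pmatrix} + \ep^{-1}A^{(1)}_\theta\Bigg)(U^{(1)}-V^\ep_{\lambda,\theta}) = \begin{pmatrix} 0 \\ -P_2 b^{-1}U^{(2)}_2\end{pmatrix}.
\end{equation*}
Since $P_2 b^{-1}$ restricted to $\mathrm{range}(P_2)$ inherits coercivity from $b^{-1}$ (as $P_2\phi=\phi$ on $\mathrm{range}(P_2)$), Lemma \ref{p.reg} applies and yields $\|\lambda(U^{(1)}-V^\ep_{\lambda,\theta})_1\|+\|\lambda^{1/2}(U^{(1)}-V^\ep_{\lambda,\theta})_2\|\lesssim |\lambda|^{1/2}\|U^{(2)}_2\|$, which after dividing by $|\lambda|^{1/2}$ gives $\|\lambda^{1/2}(U^{(1)}-V^\ep_{\lambda,\theta})_1\|+\|(U^{(1)}-V^\ep_{\lambda,\theta})_2\|\lesssim \|U^{(2)}\|$.

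Next I would estimate $\|U^{(2)}\|$: the uniform invertibility of $A^{(2)}_\theta$ from Proposition \ref{prop.diagA} applied to the identity $U^{(2)} = \ep(A^{(2)}_\theta)^{-1}\bigl(P_\theta^\perp(F,0)^\top - P_\theta^\perp\,\mathrm{diag}(\lambda,b^{-1})U^\ep_{\lambda,\theta}\bigr)$ gives $\|U^{(2)}\|\lesssim \ep(\|F\|+\|\lambda(U^\ep_{\lambda,\theta})_1\|+\|(U^\ep_{\lambda,\theta})_2\|)$, and Lemma \ref{p.reg} applied directly to \eqref{heatP1} with $f_2=0$ produces $\|\lambda(U^\ep_{\lambda,\theta})_1\|+\|\lambda^{1/2}(U^\ep_{\lambda,\theta})_2\|\lesssim \|F\|$; using $|\lambda|\ge \nu$ to absorb the $|\lambda|^{-1/2}$ factor in the latter, one concludes $\|U^{(2)}\|\lesssim \ep\|F\|$. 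In particular $\|U^{(2)}_2\|\lesssim \ep\|F\|$, and combined with the previous step this bounds $\|\lambda^{1/2}(U^{(1)}-V^\ep_{\lambda,\theta})_1\|+\|(U^{(1)}-V^\ep_{\lambda,\theta})_2\|$ by $\ep\|F\|$ as well.

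The main obstacle is the remaining piece $\|\lambda^{1/2}U^{(2)}_1\|\lesssim \ep\|F\|$: the naive chain $\|\lambda^{1/2}U^{(2)}_1\|\le|\lambda|^{1/2}\|U^{(2)}\|\lesssim \ep|\lambda|^{1/2}\|F\|$ lacks uniformity in $\lambda$. To extract the missing $|\lambda|^{-1/2}$, I exploit the potential-solenoidal parametrisation $U^{(2)}=(p_1,(\nabla+\i\theta)p_2)^\top$ of $E_\theta^\perp$ from \eqref{E1}. The second row of $\ep^{-1}A^{(2)}_\theta U^{(2)} = P_\theta^\perp(F,0)^\top - P_\theta^\perp\,\mathrm{diag}(\lambda,b^{-1})U^\ep_{\lambda,\theta}$ reads $(\nabla+\i\theta)p_1 = \ep(I-P_2)b^{-1}(U^\ep_{\lambda,\theta})_2$. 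The uniform invertibility of $A^{(2)}_\theta$ in Proposition \ref{prop.diagA}, specialised to sources of the form $(0,(\nabla+\i\theta)q)^\top \in E_\theta^\perp$ with $q\in H^1_{\per,0}(\square)$, reduces to a $\theta$-Poincar\'e-type inequality $\|q\|\lesssim \|(\nabla+\i\theta)q\|$ uniform in $\theta\in\square^*$; applied to $p_1$ this yields $\|U^{(2)}_1\|=\|p_1\|\lesssim \ep\|(U^\ep_{\lambda,\theta})_2\|\lesssim \ep|\lambda|^{-1/2}\|F\|$, so $\|\lambda^{1/2}U^{(2)}_1\|\lesssim \ep\|F\|$. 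Assembling the four pieces via the triangle inequality then produces \eqref{hP.errest1}.
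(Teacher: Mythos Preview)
Your proof is correct and follows essentially the same approach as the paper: both rely on the decomposition via $P_\theta$ from Proposition~\ref{prop.diagA}, apply the maximal regularity Lemma~\ref{p.reg} to the equation for $U^{(1)}-V$ on $E_\theta$, and use the uniform invertibility \eqref{A2ubi} together with the a priori bound $\|\lambda^{1/2}(U^\ep_{\lambda,\theta})_2\|\lesssim\|F\|$ from Lemma~\ref{p.reg} applied to \eqref{heatP1}. The only organisational difference is in the treatment of $U^{(2)}$: the paper applies Lemma~\ref{p.reg} once more to the $E_\theta^\perp$-equation \eqref{P.U2p} to obtain $\|\lambda^{1/2}\ep^{-1}(\nabla+\i\theta)U^{(2)}_1\|+\|\ep^{-1}(\div+\i\theta\cdot)U^{(2)}_2\|\lesssim\|F\|$ and then converts via \eqref{A2ubi}, whereas you read off the second row directly and invoke the Poincar\'e-type inequality (which is exactly what \eqref{A2ubi} encodes) --- the two arguments are equivalent.
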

\begin{proof}
Dropping super and subscripts; from the assumptions on $b$, it follows from  Lemma \ref{p.reg} that the solution $U$ to \eqref{heatP1} satisfies 
	\begin{equation}\label{hPerr.p1}
		\|  \l U_1 \|_{L^2(\square)}+ \| \l^{1/2} U_2  \|_{L^2(\square;\CC^d)} \lesssim \|  F \|_{L^2(\square)}. 
	\end{equation}
As in the proof of Proposition \ref{prop.W1stest},   $U^{(1)} := P_\t U \in E_\t$, $U^{(2)} :=  P^\perp_\t U\in E^\perp_\t$ are readily seen to solve 
	\begin{equation}\label{hP.U1p}
		( P_\t M + \ep^{-1} P_\t A_\t ) U^{(1)} = P_\t (F,0)^\top - P_\t M U^{(2)},
		\end{equation}
	and 
	\begin{equation}\label{P.U2p}
		(P^\perp_\t M+ \ep^{-1} P^\perp_\t A_\t ) U^{(2)} = P^\perp_\t (F,0)^\top- P^\perp_\t M U^{(1)},
	\end{equation}
	where $M=  \left( \begin{matrix} \l & 0 \\  0&b^{-1}
	\end{matrix} \right)$. Now, from \eqref{heatP2} and \eqref{hP.U1p}
	it follows that (cf. Remark \ref{remEform})
		\begin{equation}\label{heatdifference}
	(P_\t M + \ep^{-1} P_\t A_\t ) (U^{(1)} - V) = - P_\t M U^{(2)} = P_\t(0,- b^{-1} U^{(2)}_2 )^\top.
	\end{equation}
Notice that \eqref{heatdifference} is a well-posed evolutionary problem on $E_\t$ and an application of  Lemma \ref{p.reg} (for  $H^1 = \CC$, $H^2=\CC^d \oplus L^2_{\sol,\t}(\square)$, $\mathfrak{n} = P_2 b^{-1}$ and $C = -\ep^{-1}  (\div + \i \t \cdot) P_1$ with $C^* =  -\ep^{-1} (\grad + \i \t )P_2$, see Remark \ref{remEform}) gives 
	\begin{equation}\label{30.06.21e1}
	\| \l U^{(1)}_1 - \l V_1 \|_{L^2(\square)}+ 	\| \l^{1/2} U^{(1)}_2 - \l^{1/2} V_2 \|_{L^2(\square;\CC^d)}  \lesssim  \|  \l^{1/2}  U^{(2)}_2 \|_{L^2(\square)}.
	\end{equation}
Similarly, \eqref{P.U2p} is a well-posed evolutionary problem on $E^\perp_\t$and  Lemma \ref{p.reg}, along with the bound \eqref{hPerr.p1} and the fact $P^\perp_\t M U^{(1)}=P^\perp_\t (0,b^{-1}U^{(1)}_2)^\top $, gives
		\[
	\|\l^{1/2} \ep^{-1} (\nabla + \i \t) U^{(2)}_1 \|_{L^2(\square)} + \| \ep^{-1} (\div + \i \t) U^{(2)}_2 \|_{L^2(\square);\CC^d} \lesssim \| F \|_{L^2(\square)} + \| \l^{1/2} U_2 \|_{L^2(\square;\CC^d)} \lesssim \| F\|_{L^2(\square)}.
	\]
Consequently, by utilising \eqref{A2ubi} and the above inequality, it follows that 
	\begin{equation}\label{hP.U2est}
		\| \l^{1/2} U^{(2)}_1 \|_{L^2(\square)} + \| U^{(2)}_2 \|_{L^2(\square;\CC^d)} \lesssim \ep  \| F \|_{L^2(\square)}. 
	\end{equation}
It is straight-forward to deduce that \eqref{hP.errest1}  follows from  \eqref{30.06.21e1} and \eqref{hP.U2est}. The proof is complete. 
\end{proof}
Next, we provide the analogue of Proposition \ref{Vrepresentation} (with the proof being essentially the same).
\begin{proposition}\label{hVrepresentation}
	The solution $V^\ep_{\l,\t}$ to \eqref{heatP2} has the form
	\[
	V^\ep_{\l,\t} =  c \left( \begin{matrix}
		1 \\ b(I+ \nabla N_\t + \i \t \otimes N_\t)	\frac{\i \t}{\ep } 
	\end{matrix} \right), \quad c = (\l + b_\t \tfrac{\t}{\ep} \cdot \tfrac{\t}{\ep} )^{-1}  \langle F,1 \rangle_{L^2(\square)}.
	\] 
\end{proposition}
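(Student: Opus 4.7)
The plan is to follow the proof of Proposition \ref{Vrepresentation} essentially line by line; the only substantive difference is that the material law in \eqref{heatP2} has $b^{-1}$ in its $(2,2)$-slot rather than $\l a^{-1}$, so several wave-case factors of $\l$ coming from the lower-right block are absent here.

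First, since $V^\ep_{\l,\t}\in E_\t$, I would write $V^\ep_{\l,\t}=(c_1,c_2+s)^\top$ with $c_1\in\CC$, $c_2\in\CC^d$ and $s\in L^2_{\sol,\t}(\square)$. Testing \eqref{heatP2} against the three natural building blocks of $E_\t$ (scalar constants in the first slot, constant vectors in the second, and solenoidal fields in the second), and using as in the wave case that $\i\t\cdot\langle s,1\rangle_{L^2(\square)}=0$ (obtained by testing the defining relation of $L^2_{\sol,\t}$ against $\phi\equiv 1$) to eliminate the $\i\t c_1$-term from the solenoidal equation, should yield the system
\begin{gather*}
\l c_1 - \ep^{-1}\i\t\cdot c_2 = \langle F,1\rangle_{L^2(\square)},\\
\langle b^{-1}(c_2+s),\tilde c\rangle_{L^2(\square;\CC^d)} - \ep^{-1}c_1\,\i\t\cdot\overline{\tilde c}=0 \quad (\tilde c\in\CC^d),\\
\langle b^{-1}(c_2+s),\tilde s\rangle_{L^2(\square;\CC^d)}=0 \quad (\tilde s\in L^2_{\sol,\t}(\square)).
\end{gather*}
The single substantive difference from the wave analogue is the absence of an extra $\l$-prefactor in the middle equation.

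Next, from the last equation together with the decomposition \eqref{maindecomp}$_2$ and \eqref{L2potrep}, I would write $b^{-1}(c_2+s)=\xi+(\nabla+\i\t)p_2$ with $\xi\in\CC^d$ and $p_2\in H^1_{\per,0}(\square)$. The middle equation then fixes $\xi=\ep^{-1}\i\t c_1$ (compare $\xi=\ep^{-1}\l^{-1}\i\t c_1$ for the wave case), and testing the representation against $(\nabla+\i\t)\phi_0$ for $\phi_0\in H^1_{\per,0}(\square)$ identifies $p_2=N_\t\cdot\xi$ via the cell problem \eqref{thetacell} with $a$ replaced by $b$. Therefore $c_2+s=b(I+\nabla N_\t+\i\t\otimes N_\t)\xi$, which gives the representation of $V^\ep_{\l,\t}$ claimed in the statement, with $c=c_1$.

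It remains to compute $c_1$ explicitly. Pairing the identity $c_2+s=b(\xi+(\nabla+\i\t)N_\t\cdot\xi)$ against the constant vector $\i\t$ and using once more that $\i\t\cdot\langle s,1\rangle_{L^2(\square)}=0$ to kill the $s$-contribution gives $c_2\cdot\i\t = b_\t\xi\cdot\i\t$ directly from the definition of $b_\t$. Substituting $\xi=\ep^{-1}\i\t c_1$ into the first projected equation then yields $(\l + b_\t\tfrac{\t}{\ep}\cdot\tfrac{\t}{\ep})c_1=\langle F,1\rangle_{L^2(\square)}$, as required. No genuinely new ideas are needed; the only obstacle is careful bookkeeping of the $\l$-factors, which is precisely what causes the heat denominator to be first-order in $\l$ (reflecting the parabolic nature of the system) rather than second-order as in the wave case.
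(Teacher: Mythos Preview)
Your proposal is correct and follows exactly the route the paper intends: the paper itself gives no separate proof for Proposition~\ref{hVrepresentation} beyond the remark ``with the proof being essentially the same'' as Proposition~\ref{Vrepresentation}, and you have carried out precisely that adaptation, correctly tracking that the missing $\l$ in the $(2,2)$-block of the material law yields $\xi=\ep^{-1}\i\t c_1$ (hence $c=c_1$) and a first-order-in-$\l$ denominator.
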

The final step in the proof of Theorem \ref{hEstthm} is to provide the analogue of Proposition \ref{p.VW}.
\begin{proposition}\label{hp.VW}
	For fixed $\ep, \t , \l$,  consider the solution $V^\ep_{\l,\t}$ to \eqref{heatP2}  and 
	\begin{equation*}
		W_{\l,\xi} =  c_0 \left( \begin{matrix}
			1 \\  b(I+ \nabla N_0 )	\i \xi
		\end{matrix} \right), \quad c_0 = (\l + b_0 \xi \cdot \xi )^{-1} \langle F,1 \rangle_{L^2(\square)}, \quad \xi \in \RR.
	\end{equation*}
	
	Then, there exists $C>0$ independent of $\ep, \l, \t$ and $f$ such that
	\begin{equation}
			\label{h.errest1}
	\| \l^{1/2} (V^\ep_{\l,\t})_1 - \l^{1/2}(W_{\l,\t/\ep})_1 \|_{L^2(\square)}  +		\| (V^\ep_{\l,\t})_2 - (W_{\l,\t/\ep})_2 \|_{L^2(\square; \CC^{d})} \le C \ep \| F\|_{L^2(\square)}.
	\end{equation}
\end{proposition}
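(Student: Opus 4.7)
The plan is to follow the template of Proposition \ref{p.VW}, incorporating the modifications needed to account for the heat-system resolvent denominator $\l + b_\t \tfrac{\t}{\ep}\cdot\tfrac{\t}{\ep}$ (replacing the wave-case denominator $\l^2 + a_\t \tfrac{\t}{\ep}\cdot\tfrac{\t}{\ep}$) and to handle the $\l^{1/2}$-weighting of the first component in \eqref{h.errest1}. Using Proposition \ref{hVrepresentation} together with \eqref{hp.Wrep}, the first-component difference is $(V^\ep_{\l,\t})_1 - (W_{\l,\t/\ep})_1 = c - c_0$, while the second component splits, exactly as in the wave proof, as
\begin{align*}
(V^\ep_{\l,\t})_2 - (W_{\l,\t/\ep})_2 &= b\,\i\tfrac{\t}{\ep}(c-c_0) + b\nabla N_0\, \i\tfrac{\t}{\ep}(c-c_0) \\
&\quad + b(\nabla N_\t - \nabla N_0)\,\i\tfrac{\t}{\ep}\,c + b(\i\t\otimes N_\t)\,\i\tfrac{\t}{\ep}\,c.
\end{align*}
Applying Proposition \ref{Ntasym} with $b$, $b_\t$, $b_0$ in place of $a$, $a_\t$, $a_0$, the $L^2(\square;\CC^d)$-norm of this combination is controlled, up to an absolute constant, by $(|\t|/\ep)|c-c_0| + (|\t|^2/\ep)|c|$.

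I would then reduce to estimating the three scalar quantities $(|\t|^2/\ep)|c|$, $(|\t|/\ep)|c-c_0|$ and the weighted quantity $|\l|^{1/2}|c-c_0|$. As in the wave proof, introduce the auxiliary function $d(t) := t/|\l+t|$; since $\Re\l=\nu>0$ implies $|\l+t|\ge \nu+t$ for real $t\ge 0$, one has $d\le 1$ uniformly in $\l\in\nu+\i\RR$. Combined with Proposition \ref{Ntasym} (in particular the uniform equivalence $b_\t \tfrac{\t}{\ep}\cdot\tfrac{\t}{\ep} \sim |\t|^2/\ep^2$ and the Lipschitz bound $|(b_\t - b_0)\xi\cdot\eta| \lesssim |\t||\xi||\eta|$), both $(|\t|^2/\ep)|c|$ and $(|\t|/\ep)|c-c_0|$ are immediately bounded by $\ep\|F\|_{L^2(\square)}$, which disposes of the second-component contribution to \eqref{h.errest1}.

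The main obstacle is the weighted first-component bound $|\l|^{1/2}|c-c_0| \lesssim \ep\|F\|_{L^2(\square)}$. Unwinding the definitions of $c$ and $c_0$, this reduces to establishing the uniform inequality
\[
\frac{|\l|^{1/2} s^{3/2}}{|\l+s|^2} \lesssim 1 \qquad (s\ge 0,\ \l=\nu+\i k,\ k\in\RR),
\]
evaluated at $s = b_\t \tfrac{\t}{\ep}\cdot\tfrac{\t}{\ep} \sim |\t|^2/\ep^2$. The key fact is the pointwise identity $|\l+s|^2 = |\l|^2 + 2\nu s + s^2$, which yields $|\l+s|\ge \max(|\l|,s)$ whenever $s\ge 0$; splitting into the regimes $s\le |\l|$ (where $|\l+s|\ge |\l|$ gives the bound by $|\l|^{1/2}|\l|^{3/2}/|\l|^2 = 1$) and $s\ge |\l|$ (where $|\l+s|\ge s$ gives the bound by $|\l|^{1/2}/s^{1/2} \le 1$) then closes the estimate. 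This is the only genuinely new ingredient compared with the wave case, in which the analogous auxiliary functions had quadratic-in-$\l$ denominators and required finer splitting; here the linear denominator and the $\l^{1/2}$-weighting cancel cleanly. Assembling the first- and second-component estimates via the triangle inequality yields \eqref{h.errest1}.
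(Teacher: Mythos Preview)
Your proof is correct and follows essentially the same approach as the paper: identical component decomposition, the same use of Proposition~\ref{Ntasym} to reduce to scalar bounds on $(|\t|^2/\ep)|c|$, $(|\t|/\ep)|c-c_0|$, and $|\l|^{1/2}|c-c_0|$. The only difference is cosmetic---where the paper introduces an auxiliary function $e(t)=t/|\l+t^2|$ and bounds it by locating its maximum, you use the equivalent elementary observation $|\l+s|\ge\max(|\l|,s)$ and a case split on $s\lessgtr|\l|$, which is arguably cleaner.
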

\begin{proof}
One has
\begin{equation*}
	\big(V^\ep_{\l,\t}\big)_1 - \big(W_{\l,\t/\ep} \big)_1  = c-c_0,
\end{equation*}
and
\begin{flalign*}
	\big(V^\ep_{\l,\t}\big)_2 - \big(W_{\l,\t/\ep} \big)_2 & =b \i \tfrac{\t}{\ep} (c - c_0)  + b\nabla N_0 \i \tfrac{\t}{\ep}   ( c-c_0)   + b\big( \nabla N_\t  - \nabla N_0  \big)\i \tfrac{\t}{\ep} c+ \i \t \otimes N_\t \tfrac{\i \t}{\ep}c .
\end{flalign*}
The above identities  and \eqref{coeffass} imply that
		\[
		\| \big(V^\ep_{\l,\t}\big)_1 - \big(W_{\l,\t/\ep} \big)_1 \|_{L^2(\square)}  \le |c-c_0|, \text{ and }  \| \big(V^\ep_{\l,\t}\big)_2 - \big(W_{\l,\t/\ep} \big)_2 \|_{L^2(\square; \CC^{d})} 
		\lesssim \tfrac{|\t|}{\ep} |c-c_0| +\tfrac{|\t|^2}{\ep} |c|.
		\]
		By the definitions of $c$ and $c_0$, and Proposition \ref{Ntasym} we compute
		
		\[
		\tfrac{|\t|^2}{\ep}|c| \lesssim  \ep \frac{b_\t \tfrac{\t}{\ep }\cdot \tfrac{\t}{\ep}}{|\l +b_\t \tfrac{\t}{\ep }\cdot \tfrac{\t}{\ep}|}\| F\|_{L^2(\square)} = \ep d(b_\t \tfrac{\t}{\ep} \cdot \tfrac{\t}{\ep}) \| F\|_{L^2(\square)},
		\]
		
		\begin{flalign*}
			| c - c_0| &= \frac{ | b_\t \tfrac{\t}{\ep} \cdot \tfrac{\t}{\ep} - b_0 \tfrac{\t}{\ep} \cdot \tfrac{\t}{\ep} |}{ |\l + b_\t \tfrac{\t}{\ep} \cdot \tfrac{\t}{\ep} | | \l + b_0 \tfrac{\t}{\ep} \cdot \tfrac{\t}{\ep} | }|\langle F,1\rangle_{L^2(\square)}| 
			\lesssim  \frac{ \tfrac{|\t|^3}{\ep^2} }{ |\l + b_\t \tfrac{\t}{\ep} \cdot \tfrac{\t}{\ep} | | \l + b_0 \tfrac{\t}{\ep} \cdot \tfrac{\t}{\ep} | } \| F \|_{L^2(\square)}
			\\& \lesssim \ep d(b_\t \tfrac{\t}{\ep}\cdot \tfrac{\t}{\ep}) e\Big(\sqrt{b_0 \tfrac{\t}{\ep} \cdot \tfrac{\t}{\ep}}\Big) \| F \|_{L^2(\square)},
		\end{flalign*}
		and, similarly,
		\begin{equation*}
			\tfrac{|\t|}{\ep }| c - c_0| \lesssim \ep d(b_\t \tfrac{\t}{\ep}\cdot \tfrac{\t}{\ep}) d(b_0 \tfrac{\t}{\ep} \cdot \tfrac{\t}{\ep}) \| F \|_{L^2(\square)},
		\end{equation*}
		where $d(t):=\tfrac{t}{|\l + t|}$,  and $e(t) : = \tfrac{t}{|\l +t^2|}$, for  $t\in [0, \infty)$. With $\lambda=\nu+ \i k$	
		it is straightforward to see that $d(t) = t\big((\nu + t)^2 + k^2 \big)^{-1/2}\le 1$ for all $t \in [0,\infty)$. 
		One can readily show that $e(t) = t\big((\nu + t^2)^2 + k^2 \big)^{-1/2} \le e(|\l|^{-1/2}) \lesssim  |\l|^{-1/2}$ for all $t \in [0,\infty)$.	
		Putting the above results together gives
		\[
		\| \big(V^\ep_{\l,\t}\big)_1 - \big(W_{\l,\t/\ep} \big)_1 \|_{L^2(\square)}  \le \ep |\l| ^{-1/2}  \| F \|_{L^2(\square)},\text{ and }  \| \big(V^\ep_{\l,\t}\big)_2 - \big(W_{\l,\t/\ep} \big)_2 \|_{L^2(\square; \CC^{d})} 
		\lesssim\ep \| F \|_{L^2(\square)},
		\]			which implies \eqref{h.errest1} and completes the proof.
\end{proof}

The remaining claims of Section \ref{s.h} can be shown in a completely analogous manner as the respective ones for the wave system.

\section{Thermo-elastic system of equations} \label{s.te}

The final application of our methods to show estimates in homogenisation employing the framework of evolutionary equations concerns thermo-elasticity. In fact, we combine our results for the wave system and the heat system provided earlier. 

Consider the system of thermo-elastic equations with rapidly oscillating coefficients
\begin{equation}\label{orte}
	\left\{ \begin{aligned}
		\partial_{t}^2 u_\ep - \div_x \big( a(\tfrac{x}{\ep}) \nabla_x u_\ep \big)+ \gamma(\tfrac{x}{\ep}) v_\ep = f, \qquad t\in \RR, \, x \in \RR^d ,\\
		\d_t v_\ep - \div_x\big( b(\tfrac{x}{\ep}) \nabla_x v_\ep \big) - \overline{\gamma}(\tfrac{x}{\ep}) \d_t u_\ep = g,\qquad t\in \RR, \, x \in \RR^d.
	\end{aligned} \right.
\end{equation}

here, $\ep >0$, $u_\ep,v_\ep$ are the unknown solutions to be determined, $f$, $g$ are given, the coefficients $a$, $b$, are matrix-valued periodic functions and $\gamma$ is a scalar-valued  periodic function; all  with fundamental periodicity cell $\square: =[-\tfrac{1}{2}, \tfrac{1}{2})^d$. We additionally assume $x\mapsto \gamma(\tfrac{x}{\ep})$ (and therefore $x\mapsto\overline{\gamma}(\tfrac{x}{\ep})$) is a multiplier in $H^1$, i.e., there is $C\geq0$ such that
	\begin{equation}\label{multiplerest2}
	\int_{\RR^d} |(\nabla \gamma)(\tfrac{x}{\ep}) \phi(x)|^2 \, \mathrm{d}x\le C \int_{\RR^d} \big( |\phi(x)|^2 + |\nabla \phi(x)|^2 \big) \, \mathrm{d}x, \quad (\phi \in H^1(\RR^d)).
\end{equation}

We shall compare \eqref{orte} to the homogenised system 
\begin{equation}\label{orhte}
	\left\{ \begin{aligned}
		\partial_{t}^2 u_0 - \div_x a_0 \nabla_x u_0 + \langle \gamma \rangle v_0 = f, \qquad t\in \RR, \, x \in \RR^d ,\\
		\d_t v_0 - \div_x b_0 \nabla_x v_0  - \overline{\langle \gamma \rangle}\Dt u_0 = g,\qquad t\in \RR, \, x \in \RR^d.
	\end{aligned} \right.
\end{equation}
We begin by rewriting \eqref{orte} and \eqref{orhte} as  a system of evolutionary equations as follows: for $\ep>0$ the unknown $U^\ep=(\partial_t u_\ep, a(\tfrac{x}{\ep})\nabla_x u_\ep, v_\ep,b(\tfrac{x}{\ep})\nabla_x v_\ep)^\top$ solves 
\begin{equation}\label{EvoP}
	(\partial_t M_\ep+N_\ep+A)U^\ep = F,  
\end{equation} 
where $F = (f,0,g,0)^\top$ and
\begin{multline*}
M_\ep= \begin{pmatrix}
	1 & 0 & 0 & 0 \\ 0 & a^{-1}(\tfrac{x}{\ep}) & 0 & 0 \\ 0 & 0 & 1 & 0 \\ 0 & 0 & 0 &0 
\end{pmatrix}, \  N_\ep = \begin{pmatrix}
	0 & 0 & \gamma(\tfrac{x}{\ep})  & 0 \\ 0 & 0 & 0 & 0 \\ -\overline{\gamma}(\tfrac{x}{\ep}) & 0 & 0 & 0 \\ 0 & 0 & 0 &b^{-1}(\tfrac{x}{\ep})
\end{pmatrix} \text{ and} \\  A = \begin{pmatrix}
	0 & -\div_x  & 0 &0 \\  -\nabla_x  & 0 &  0 &  0 \\ 0 & 0 & 0 & -\div_x \\ 0 & 0 & -\nabla_x &0 
\end{pmatrix}.
\end{multline*}
For $\ep =0 $,  $U^0 = (\Dt u_0, a_0 \nabla_x u_0, v_0, b_0\nabla_x v_0)^\top$  solves \eqref{EvoP}  with
\[
M_0 = \begin{pmatrix}
	1 & 0 & 0 & 0 \\ 0 & a^{-1}_0 & 0 & 0 \\ 0 & 0 & 1 & 0 \\ 0 & 0 & 0 &0 
\end{pmatrix}\  \quad{and} \ \  N_0 = \begin{pmatrix}
	0 & 0 & \langle \gamma \rangle & 0 \\ 0 & 0 & 0 & 0 \\ -\overline{\langle \gamma \rangle} & 0 & 0 & 0 \\ 0 & 0 & 0 &b^{-1}_0
\end{pmatrix}.
\]

The following well-posedness result for \eqref{EvoP} is well-known (see {\cite{P09} or \cite{STW22,W13}). 

\begin{theorem}\label{evte} Fix $\ep\ge 0$.  Suppose, $a,b,\gamma \in L^\infty(\RR^d;L(\CC^d))$ be such that
\[
  a(x)=a(x)^* \text{ and }a(x) \geq \kappa_1\text{ and }\Re b(x)\geq \kappa_2
\] in the sense of positive definiteness for a.e.~$x\in \RR^d$ for some $\kappa_1,\kappa_2>0$. 

	Then, for all  $\nu>0$ and for all $f\in H^1_{\nu}(\RR;L^2(\RR^d))$ and $g\in L^2_{\nu}(\RR;L^2(\RR^d))$ there exists a unique solution $U^\ep $ to \eqref{EvoP} with 
	$U^\ep_1, U^\ep_3  \in H_\nu^1(\RR;L^2(\RR^d))\cap L^2_{\nu}(\RR; H^1(\RR^d));$ $U^\ep_2  \in H_\nu^1(\RR;L^2(\RR^d ;\CC^d))\cap L^2_{\nu}(\RR; H_{\rm div}(\RR^d))$ and 
	$U^\ep_4  \in  L^2_{\nu}(\RR;  H_{\rm div}(\RR^d))$. Furthermore, the following inequalities hold: 
	\begin{multline}\label{te.e0}
\| U_1^\ep \|_{H_\nu^1(\RR;L^2(\RR^d))} + \|U_2^\ep \|_{H_\nu^1(\RR;L^2(\RR^d ;\CC^d))\cap L^2_{\nu}(\RR; H_{\rm div}(\RR^d))} + \|U_3^\ep \|_{H_\nu^1(\RR;L^2(\RR^d))} + \|U_4^\ep\|_{ L^2_{\nu}(\RR;  H_{\rm div}(\RR^d))} \\ \lesssim \| f\|_{H_\nu^1(\RR;L^2(\RR^d))} + \|g\|_{ L^2_{\nu}(\RR; L^2(\RR^d))}.
	\end{multline}
\end{theorem}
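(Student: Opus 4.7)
The plan is to place \eqref{EvoP} into the framework of Picard's theorem on evolutionary equations (see e.g.~\cite[Theorem 6.2.4]{STW22}). Viewed on the Hilbert space $L^2(\RR^d)^{2(d+1)}$, the operator $A$ is skew-selfadjoint, being a block-diagonal realisation of two copies of $\begin{pmatrix} 0 & -\div \\ -\nabla & 0\end{pmatrix}$, while $M_\ep$ and $N_\ep$ are multiplication operators bounded uniformly in $\ep$ thanks to the $L^\infty$-assumptions on $a,b,\gamma$ together with the positivity of $a$ (which supplies $a^{-1}\in L^\infty$ via Remark \ref{rem:coerc}). Picard's theorem then applies once one verifies the positive-definiteness condition $\Re(\nu M_\ep + N_\ep)\ge \kappa_0 I$ uniformly in $\ep$ for some $\kappa_0>0$.

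The key observation for the positivity is that the off-diagonal $\gamma$-coupling in $N_\ep$ is skew-Hermitian: the $(1,3)$-block of $N_\ep + N_\ep^*$ is $\gamma - \gamma = 0$ and the $(3,1)$-block is $-\overline\gamma + \overline\gamma = 0$, so the $\gamma$-coupling vanishes in the real part. Consequently $\Re(\nu M_\ep + N_\ep)$ is block-diagonal with entries $\nu I$, $\nu\,\Re a^{-1}(\tfrac{\cdot}{\ep})$, $\nu I$ and $\Re b^{-1}(\tfrac{\cdot}{\ep})$, each of which is bounded below by a positive constant independent of $\ep$, by Remark \ref{rem:coerc} and the hypotheses on $a,b$. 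Picard's theorem then yields a unique $U^\ep \in L^2_\nu(\RR;L^2(\RR^d)^{2(d+1)})$ solving \eqref{EvoP}, together with the basic bound $\|U^\ep\|_{L^2_\nu}\lesssim \|F\|_{L^2_\nu}$.

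To upgrade $U^\ep$ to the regularity asserted in \eqref{te.e0}, I would apply the bilateral Laplace transform $\FL$ to pass to a fibre problem indexed by $\lambda=\nu+ik$, and combine the two sources of maximal regularity already employed in Sections \ref{s.w} and \ref{s.h}. For the wave block, the hypothesis $f\in H^1_\nu(\RR;L^2(\RR^d))$ translates into $\lambda\,\FL f\in L^2$, and a standard energy estimate then controls $\lambda U^\ep_1$ and $\lambda U^\ep_2$ in terms of $\lambda\,\FL f$ and the coupling forcing $\gamma U^\ep_3$; for the heat block, Lemma \ref{p.reg} supplies the maximal regularity estimate controlling $\lambda U^\ep_3$ and $\div U^\ep_4$ by $g$ and the coupling forcing $\overline\gamma U^\ep_1$. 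The spatial regularities in \eqref{te.e0} follow by rearranging the equation: rows $2$ and $4$ give $\nabla U^\ep_1,\nabla U^\ep_3\in L^2_\nu$ through the boundedness of $a^{-1},b^{-1}$, while rows $1$ and $3$ give $\div U^\ep_2,\div U^\ep_4\in L^2_\nu$. The main obstacle is closing the bootstrap through the coupling, since a priori the heat block provides only $\lambda^{1/2}$-control on $U^\ep_3$ whereas the wave block would like $\lambda$-control on $\gamma U^\ep_3$; I would address this by summing the fibrewise wave and heat estimates with suitably chosen weights in $\lambda$ and absorbing the bounded $\gamma$-coupling contributions into the positive-definite part on the left-hand side using the uniform lower bounds on $\Re a^{-1}$ and $\Re b^{-1}$, after which inverting the Laplace transform returns \eqref{te.e0}.
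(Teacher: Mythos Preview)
The paper does not actually supply a proof of Theorem~\ref{evte}; it simply records the result as ``well-known'' and defers to \cite{P09,STW22,W13}. So there is nothing in the paper to compare your argument to line by line. Your overall plan---verify skew-selfadjointness of $A$, check $\Re(\nu M_\ep+N_\ep)\ge\kappa_0 I$ using that the $\gamma$-coupling is skew-Hermitian, invoke Picard's theorem for basic $L^2_\nu$ well-posedness, then upgrade regularity block by block---is exactly the right one and is in the spirit of the cited references.

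The one place where you are making life harder for yourself is the ``main obstacle'' you flag. You write that the heat block gives only $\lambda^{1/2}$-control on $U^\ep_3$, but Lemma~\ref{p.reg} gives \emph{full} $\lambda$-control on the first component $u_1$ (here $U^\ep_3$); it is the flux component $u_2$ (here $U^\ep_4$) that gets only $\lambda^{1/2}$, and the theorem does not ask for $H^1_\nu$-regularity of $U^\ep_4$ anyway. Moreover, you do not need any delicate $\lambda$-weighted absorption argument to close the bootstrap: the basic Picard estimate on the \emph{full coupled system} already gives $\|U^\ep_1\|_{L^2_\nu}\lesssim\|f\|_{L^2_\nu}+\|g\|_{L^2_\nu}$, which is all the heat block needs as input. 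The argument then runs sequentially rather than simultaneously: (i) Picard on the full system yields $\|U^\ep\|_{L^2_\nu}\lesssim\|f\|_{L^2_\nu}+\|g\|_{L^2_\nu}$; (ii) Lemma~\ref{p.reg} applied to rows 3--4 with forcing $g+\overline{\gamma}(\tfrac{\cdot}{\ep})U^\ep_1\in L^2_\nu$ gives $\|U^\ep_3\|_{H^1_\nu}+\|\div U^\ep_4\|_{L^2_\nu}\lesssim\|f\|_{L^2_\nu}+\|g\|_{L^2_\nu}$; (iii) now $f-\gamma(\tfrac{\cdot}{\ep})U^\ep_3\in H^1_\nu$, so Proposition~\ref{prop:PTwave} applied to rows 1--2 gives $U^\ep_1,U^\ep_2\in H^1_\nu$ with the stated bound; (iv) the spatial regularities $\nabla U^\ep_1,\nabla U^\ep_3,\div U^\ep_2\in L^2_\nu$ are read off from rows 2, 4, 1 of \eqref{EvoP} exactly as you indicate.
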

In this section, we shall prove the following homogenisation theorem:
\begin{theorem}\label{tehomthm}
	Fix $\ep> 0$ and $f \in H^2_\nu(\RR; L^2(\RR^d))$ and  $g \in H^1_\nu(\RR; L^2(\RR^d))$. Suppose \eqref{evte}  holds. 
	Then, the following inequalities hold:
	\begin{align}
	&\notag	\| \Dt u_\ep - \Dt u_0 \|_{L^2_\nu(\RR;L^2(\RR^d))} + 	\| \Dt^{1/2} v_\ep - \Dt^{1/2} v_0 \|_{L^2_\nu(\RR;L^2(\RR^d))} 	\\&+	\| a(\tfrac{\cdot}{\ep}) \nabla u_\ep  - a(\tfrac{\cdot}{\ep} )\nabla ( u_0 + \ep N_0(\tfrac{\cdot}{\ep}) \cdot  \nabla  u_0)  \|_{L^2_\nu(\RR;L^2(\RR^d;\RR^d))} \nonumber \\
	&	+ 	\| b(\tfrac{\cdot}{\ep}) \nabla v_\ep  -b(\tfrac{\cdot}{\ep} )\nabla ( v_0 + \ep N_0(\tfrac{\cdot}{\ep}) \cdot  \nabla  v_0)  \|_{L^2_\nu(\RR;L^2(\RR^d;\RR^d))} 
	+	\| a(\tfrac{\cdot}{\ep}) \nabla u_\ep  - a_0   \nabla  u_0 \|_{L^2_\nu(\RR;H^{-1}(\RR^d;\RR^d))}  \nonumber\\
	&+ 	\| b(\tfrac{\cdot}{\ep}) \nabla v_\ep  - b_0   \nabla  v_0 \|_{L^2_\nu(\RR;H^{-1}(\RR^d;\RR^d))}  \le C \ep \big( \| \Dt^2 f \|_{L^2_\nu(\RR;L^2(\RR^d))} +  \| \Dt  g\|_{L^2_\nu(\RR;L^2(\RR^d))} \big), \label{temain1}
	\end{align}
	for some $C>0$ independent of $\ep$, $f$ and $g$.
\end{theorem}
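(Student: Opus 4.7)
The plan is to apply the spectral-method framework of Sections~\ref{s.w}--\ref{s.h} directly to the coupled evolutionary system~\eqref{EvoP}, exploiting the block structure of $M_\ep$, $N_\ep$ and $A$ in order to recycle the wave- and heat-type analyses fibre by fibre. Concretely, I would first apply $\FL$, $\RS$ and $\GT$ to~\eqref{EvoP}; since $\gamma(\tfrac{\cdot}{\ep})$ is multiplication by a periodic function, it commutes with all three transforms up to rescaling (cf.~\eqref{tp4}). This yields the fibered family
\[
\bigl(\l M + N + \ep^{-1}\mathcal{A}_\t\bigr) U^\ep_{\l,\t} = F,
\]
on $L^2(\square;\CC^{2d+2})$, where $\mathcal{A}_\t$ is block-diagonal consisting of two copies of the operator $A_\t$ from Sections~\ref{s.w}-\ref{s.h}, while $M, N$ have the $\ep$-free form of the theorem statement with the arguments of $a,b,\gamma$ at $y\in\square$.

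Next I would introduce the product reducing subspace $\tilde E_\t := E_\t \oplus E_\t$ with $E_\t$ as in~\eqref{E1}, and decompose $U^\ep_{\l,\t}=U^{(1)}+U^{(2)}$ accordingly. Since $\mathcal{A}_\t$ is block-diagonal, Proposition~\ref{prop.diagA} applies to each block separately, so $\tilde E_\t$ reduces $\mathcal{A}_\t$ and $\mathcal{A}_\t$ is uniformly invertible on the complement. Mimicking the first approximation steps of Propositions~\ref{prop.W1stest} and~\ref{prop.P1stest} one then obtains that $U^{(2)}$ is $O(\ep)$, with a wave-type factor $|\l|$ on the first two components and a heat-type factor $|\l|^{1/2}$ on the last two (via Lemma~\ref{p.reg}); the additional off-diagonal zero-order coupling through $N$ contributes only bounded perturbations absorbed by these $\l$-gains.

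On $\tilde E_\t$ the analysis of $U^{(1)}$ proceeds as in Propositions~\ref{Vrepresentation} and~\ref{hVrepresentation}: one obtains a closed-form representation in terms of scalar/vector unknowns $c_1\in\CC,\,c_2\in\CC^d,\,c_3\in\CC,\,c_4\in\CC^d$ satisfying a small coupled algebraic system. The key observation is that the projection $\tilde P_\t$ collapses $N U^{(1)}$ to the mean value of $\gamma$ acting on the constant parts, i.e.\ projecting $(\gamma c_3,0,-\overline\gamma c_1,\ldots)$ onto $\tilde E_\t$ yields $(\langle\gamma\rangle c_3,0,-\overline{\langle\gamma\rangle}c_1,\ldots)$, producing precisely the coupling of~\eqref{orhte}. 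Mimicking Propositions~\ref{p.VW} and~\ref{hp.VW} and combining the decay functions $d(t)$ and $e(t)$ introduced there then yields the leading-order asymptotics uniformly in $\l,\t,F$. Finally, inverting $\GT$, $\RS$, $\FL$ as in the wave and heat sections and using the smoothing operator $\mathcal{P}_\ep$, Proposition~\ref{mv}, the multiplier properties~\eqref{multiplerest} for $N_0$ and~\eqref{multiplerest2} for $\gamma$, together with the regularity of the homogenised solutions $(u_0,v_0)$ of~\eqref{orhte} (which follows from applying Theorem~\ref{evte} at $\ep=0$ and converts intermediate norms into the data norm $\|\Dt^2 f\|+\|\Dt g\|$), one obtains~\eqref{temain1}.

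The main obstacle is the linear-algebra step combining the wave and heat fibre estimates in the presence of the off-diagonal coupling through $\langle\gamma\rangle$: the wave block produces an $O(\ep|\l|^2)$ estimate while the heat block produces an $O(\ep)$ estimate independent of $|\l|$, and the coupled $2\times 2$ algebraic system for the scalar coefficients mixes these two scales. Care is required to ensure the resulting bound is $O(\ep)$ in the \emph{correct} weighted norm for each component (namely $\l$-weight for the wave block and $\l^{1/2}$-weight for the heat block), which is achieved through a careful analysis of the resolvent determinant of the coupled system combined with the bookkeeping for the decay functions $d,e$ already established.
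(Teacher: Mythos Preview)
Your plan is plausible and should ultimately work, but it takes a genuinely different route from the paper's proof, which is considerably more economical. Rather than transforming the coupled system and carrying out a fresh fibre-by-fibre analysis, the paper decouples at the PDE level: it observes that $u_\ep$ solves a wave equation with right-hand side $f-\gamma(\tfrac{\cdot}{\ep})v_\ep$ and applies Theorem~\ref{whomthm} \emph{directly} (using Theorem~\ref{evte} to bound $\|\Dt^2(\gamma(\tfrac{\cdot}{\ep}) v_\ep)\|$ by the data), obtaining estimates between $u_\ep$ and an intermediate $u_0^\ep$ that solves the homogenised wave equation with the \emph{same} $\ep$-dependent source; likewise $v_\ep$ is compared via Theorem~\ref{hhomthm} to an intermediate $v_0^\ep$. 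The pair $(u_0^\ep,v_0^\ep)$ then solves the fully homogenised thermo-elastic system~\eqref{EvoP} for $\ep=0$ up to a remainder $R_\ep$ encoding the differences $\gamma(\tfrac{\cdot}{\ep})v_\ep-\langle\gamma\rangle v_0^\ep$ and $\overline\gamma(\tfrac{\cdot}{\ep})\Dt u_\ep-\overline{\langle\gamma\rangle}\Dt u_0^\ep$; this remainder is shown to be $O(\ep)$ via Proposition~\ref{mv} and the multiplier hypothesis~\eqref{multiplerest2}, and a final application of the well-posedness estimate~\eqref{te.e0} yields $\|U_0^\ep-U^0\|\lesssim\ep$ and closes the argument.

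The paper's route entirely sidesteps the ``main obstacle'' you flag: since Theorems~\ref{whomthm} and~\ref{hhomthm} are invoked as black boxes with the coupling pushed to the right-hand side, there is no coupled algebraic system on $\tilde E_\t$ to solve and no need to reconcile the $|\l|^2$- and $|\l|^0$-scales at the fibre level. Your approach would give a more unified transform-level picture, and could in principle track the $\l$-dependence of individual components more sharply, but at the cost of substantially more bookkeeping---in particular, the cross-terms $P_\t^\perp N U^{(1)}$ (carrying the fluctuating part $\gamma-\langle\gamma\rangle$) feed into the equation for $U^{(2)}$ and must be handled carefully, whereas the paper never sees them.
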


\begin{proof}[Proof of Theorem \ref{tehomthm}]
Note that $u_\ep$ solves the wave equation
\begin{equation}\label{te.e1}
	\partial_{tt} u_\ep - \div_x \big( a(\tfrac{x}{\ep}) \nabla_x u_\ep \big) = f - \gamma(\tfrac{x}{\ep}) v_\ep , \qquad t\in \RR, \, x \in \RR^d.\\
\end{equation}
By Theorem \ref{evte}, one has 
\[
\| \Dt^2 v_\ep \|_{L^2_\nu(\RR;L^2(\RR^d))} \lesssim \| \Dt^2 f \|_{L^2_\nu(\RR;L^2(\RR^d))} +\| \Dt g \|_{L^2_\nu(\RR;L^2(\RR^d))} 
\]
which, along with the boundedness of $\gamma$, implies
\[
 \| \Dt^2 (f- \gamma(\tfrac{x}{\ep}) v_\ep )\|_{L^2_\nu(\RR;L^2(\RR^d))} \lesssim  \| \Dt^2 f \|_{L^2_\nu(\RR;L^2(\RR^d))} +  \| \Dt  g\|_{L^2_\nu(\RR;L^2(\RR^d))}.
\]
From this observation, along with Theorem \ref{whomthm}, one has 
	\begin{align}
	\| \Dt u_\ep - \Dt u^\ep_0 \|_{L^2_\nu(\RR;L^2(\RR^d))} &\le C \ep \big( \| \Dt^2 f \|_{L^2_\nu(\RR;L^2(\RR^d))} +  \| \Dt  g\|_{L^2_\nu(\RR;L^2(\RR^d))} \big) \label{te.e2}\\
	\| a(\tfrac{\cdot}{\ep}) \nabla u_\ep  - a(\tfrac{\cdot}{\ep} )\nabla ( u^\ep_0 + \ep N_0(\tfrac{\cdot}{\ep}) \cdot  \nabla  u^\ep_0)  \|_{L^2_\nu(\RR;L^2(\RR^d;\RR^d))} &\le C \ep \big( \| \Dt^2 f \|_{L^2_\nu(\RR;L^2(\RR^d))} +  \| \Dt  g\|_{L^2_\nu(\RR;L^2(\RR^d))} \big)\label{te.e3} \\
	\| a(\tfrac{\cdot}{\ep}) \nabla u_\ep  - a_0   \nabla  u^\ep_0 \|_{L^2_\nu(\RR;H^{-1}(\RR^d;\RR^d))}& \le C \ep\big( \| \Dt^2 f \|_{L^2_\nu(\RR;L^2(\RR^d))} +  \| \Dt  g\|_{L^2_\nu(\RR;L^2(\RR^d))} \big)\label{te.e4} 
\end{align}
for $u^\ep_0$ the solution to 
\begin{equation}\label{te.e5}
	\partial_{tt} u^\ep_0 +\div_x a_0 \nabla_x u^\ep_0  = f - \gamma(\tfrac{x}{\ep}) v_\ep , \qquad t\in \RR, \, x \in \RR^d.\\
\end{equation}
Similarly, $v_\ep$ solves the heat equation 
\begin{equation}\label{te.e6}
\d_t v_\ep - \div_x\big( b(\tfrac{x}{\ep}) \nabla_x v_\ep \big)  = g+\overline{\gamma}(\tfrac{x}{\ep}) \d_t u_\ep,\qquad t\in \RR, \, x \in \RR^d,
\end{equation}
and Theorem \ref{evte} gives
\[
\| \Dt u_\ep \|_{L^2_\nu(\RR;L^2(\RR^d))} \lesssim \| \Dt f \|_{L^2_\nu(\RR;L^2(\RR^d))} +\|  g \|_{L^2_\nu(\RR;L^2(\RR^d))}.
\]
So, by Theorem \ref{hhomthm},  one has
	\begin{align}
	\| \Dt^{1/2} v_\ep - \Dt^{1/2} v^\ep_0 \|_{L^2_\nu(\RR;L^2(\RR^d))} &\le C \ep \big(  \| \Dt f \|_{L^2_\nu(\RR;L^2(\RR^d))} +\|  g \|_{L^2_\nu(\RR;L^2(\RR^d))} \big) \label{te.7}\\
	\| b(\tfrac{\cdot}{\ep}) \nabla v_\ep  - b(\tfrac{\cdot}{\ep} )\nabla ( v^\ep_0 + \ep N_0(\tfrac{\cdot}{\ep}) \cdot  \nabla  v^\ep_0)  \|_{L^2_\nu(\RR;L^2(\RR^d;\RR^d))} &\le C \ep \big(  \| \Dt f \|_{L^2_\nu(\RR;L^2(\RR^d))} +\|  g \|_{L^2_\nu(\RR;L^2(\RR^d))} \big)\label{te.e8} \\
	\| b(\tfrac{\cdot}{\ep}) \nabla v_\ep  - b_0   \nabla  v^\ep_0 \|_{L^2_\nu(\RR;H^{-1}(\RR^d;\RR^d))}& \le C \ep \big(  \| \Dt f \|_{L^2_\nu(\RR;L^2(\RR^d))} +\|  g \|_{L^2_\nu(\RR;L^2(\RR^d))} \big)\label{te.e9} 
\end{align}
for $v^\ep_0$ the solution to 
\begin{equation}\label{te.e10}
	\d_t v^\ep_0 - \div_xb_0 \nabla_x v^\ep_0  = g+\overline{\gamma}(\tfrac{x}{\ep}) \d_t u_\ep,\qquad t\in \RR, \, x \in \RR^d.
\end{equation}
We see that $U^\ep_0 := (\Dt u^\ep_0, a_0 \nabla u^\ep_0, v^\ep_0, b_0 \nabla v^\ep_0)^\top$ solves the homogenised evolutionary thermo-elastic system
\[
(\partial_t M_0+N_0+A)U^\ep_0 = F + R_\ep,  \quad \text{where $R_\ep: =(-\gamma(\tfrac{x}{\ep}) v_\ep+ \langle \gamma \rangle v^\ep_0 ,0, \overline{\gamma}(\tfrac{x}{\ep}) \d_t u_\ep - \overline{\langle \gamma \rangle} \Dt u^\ep_0 ,0)^\top$.}
\]
We shall prove below that 
\begin{equation}\label{te.e10.1}
\| (R_\ep)_1 \|_{H^1_\nu(\RR^d;L^2(\RR^d))}+\| (R_\ep)_2 \|_{L^2_\nu(\RR^d;L^2(\RR^d))} \lesssim \ep\big(  \| \Dt^2 f \|_{L^2_\nu(\RR;L^2(\RR^d))} +\| \Dt g \|_{L^2_\nu(\RR;L^2(\RR^d))} \big).
\end{equation}
This informs us that $U^\ep - U^0$ solves the homogenised evolutionary system with $\ep$-small right-hand side. Indeed, Theorem \ref{evte} implies
\begin{multline}\label{te.e10.2}
	\| \Dt u^\ep_0 - \Dt u_0  \|_{H_\nu^1(\RR;L^2(\RR^d))} + \| a_0 \nabla_x u_\ep - a_0 \nabla_x u_0 \|_{H_\nu^1(\RR;L^2(\RR^d ;\CC^d))\cap L^2_{\nu}(\RR; H_{\rm div}(\RR^d))} \\
	+ \|v_\ep - v_0 \|_{H_\nu^1(\RR;L^2(\RR^d))} + \|b_0 \nabla_x v_\ep - b_0 \nabla_x v_0  \|_{ L^2_{\nu}(\RR;  H_{\rm div}(\RR^d))} \\ \lesssim\ep\big(  \| \Dt^2 f \|_{L^2_\nu(\RR;L^2(\RR^d))} +\| \Dt g \|_{L^2_\nu(\RR;L^2(\RR^d))} \big).
\end{multline}
Inequalities \eqref{te.e2}-\eqref{te.e4}, \eqref{te.7}-\eqref{te.e9} and \eqref{te.e10.2} prove the desired result \eqref{temain1}.

The proof is complete, subject to establishing \eqref{te.e10.1}. Let us prove this inequality here.
Clearly, $(R_\ep)_1 =- \gamma(\tfrac{x}{\ep}) v^\ep_0$ up to a term of order $\ep$ that we have the desired control over (see \eqref{te.7}). Now,
 by Proposition \ref{mv} 
\[
\mathcal{P_\ep} \gamma(\tfrac{x}{\ep}) \mathcal{P}_\ep v^\ep_0 = \langle \gamma \rangle \mathcal{P}_\ep v^\ep_0,
\]
and we compute
\[
\langle \gamma \rangle  v^\ep_0 = \gamma(\tfrac{x}{\ep})  v^\ep_0 + r_\ep; \quad r_\ep : =  - (I-\mathcal{P_\ep}) \gamma(\tfrac{x}{\ep})  v^\ep_0  - \mathcal{P_\ep} \gamma(\tfrac{x}{\ep}) (I-\mathcal{P}_\ep) v^\ep_0  + \langle \gamma \rangle (I-\mathcal{P}_\ep) v^\ep_0.
\]
Inequality \eqref{Pe1}, the assumption that $\gamma(\tfrac{x}{\ep})$ is uniformly bounded and a uniform $H^1$-multiplier, see \eqref{multiplerest2},  and Theorem \ref{evte} give 
\begin{multline*}
\|\gamma(\tfrac{x}{\ep})  v^\ep_0 - \langle \gamma \rangle  v^\ep_0   \|_{L^2_\nu(\RR^d;L^2(\RR^d))}  = \|r_\ep \|_{L^2_\nu(\RR^d;L^2(\RR^d))}  \\\lesssim \ep  \|v^\ep_0 \|_{L^2_\nu(\RR^d;H^1(\RR^d))} \lesssim  \ep \big(  \| \Dt f \|_{L^2_\nu(\RR;L^2(\RR^d))} +\|  g \|_{L^2_\nu(\RR;L^2(\RR^d))} \big).
\end{multline*}
Then the desired inequality for $(R_\ep)_1$, in \eqref{te.e10.1}, follows.
Similarly,
\begin{multline*}
\|\overline{\gamma}(\tfrac{x}{\ep})  \Dt u^\ep_0 - \overline{\langle \gamma \rangle}  \Dt u^\ep_0   \|_{L^2_\nu(\RR^d;L^2(\RR^d))} \\ \lesssim \ep  \|\Dt u^\ep_0 \|_{L^2_\nu(\RR^d;H^1(\RR^d))} \lesssim  \ep \big(  \| \Dt f \|_{L^2_\nu(\RR;L^2(\RR^d))} +\|  g \|_{L^2_\nu(\RR;L^2(\RR^d))} \big),
\end{multline*}
which along with \eqref{te.e2} provides the desired inequality for $(R_\ep)_2$. That is, \eqref{te.e10.1} holds and the proof is complete. 
\end{proof}


\begin{thebibliography}{9}
\bibitem{BiSol} Birman, Michael Sh. and Solomjak, M.Z. {\it Spectral Theory of self-adjoint operators in Hilbert spaces}, first edition. Published by D. Reidel Publishing Company
P.O. Box 17,3300 AA Dordrecht, Holland, 1987.

\bibitem{BiSu04}{Birman, M. Sh., and Suslina, T. A., 2004. Second order periodic differential operators. Threshold properties and homogenisation. St. Petersburg. Math. J. 15(5), pp. 639-714.}

\bibitem{CoWa19}{ Cooper, S. and Waurick. M. {\it Fibre Homogenisation.} Journal of Functional Analysis, (2019) Volume 276, Issue 11 Pages 3363-3405.}

\bibitem{P09}{Picard, R. {\it A structural observation for linear material laws in classical mathematical physics} Mathematical Methods in the Applied Sciences, 32(14):1768--1803,2009}

\bibitem{PTW17}
Picard, Rainer; Trostorff, Sascha; Waurick, Marcus
{\it On maximal regularity for a class of evolutionary equations.}
J. Math. Anal. Appl. 449, No. 2, 1368-1381 (2017).

\bibitem{STW22}
C.~Seifert, S.~Trostorff, and M.~Waurick
\newblock {\em Evolutionary Equations}
\newblock {Birkh\"auser, Cham, 2022}

\bibitem{W13}
Waurick M.  
{\it Homogenization of a class of linear partial differential equations}
Asymptotic Analysis 82: 271-294, 2013 


\bibitem{Su13}{Suslina, T.A. {\it Homogenization of a stationary periodic Maxwell system}, St. Petersburg	Math. J. 16:5 (2005), 863–922}

\bibitem{TrWa21}{Trostorff, S., Waurick, M. {\it Maximal Regularity for Non-autonomous Evolutionary Equations.} Integr. Equ. Oper. Theory 93, 30 (2021). https://doi.org/10.1007/s00020-021-02645-5
}

\bibitem{Zh89}{Zhikov, V. V., 1989. Spectral approach to asymptotic problems in diffusion. Differ. Equ. 25, pp. 33-39.}

\bibitem{ZhPa}{Zhikov, V. V., Pastukhova, S. E., 2016. Operator estimates in homogenization theory. Russian Mathematical Surveys, 71 417.} 
\end{thebibliography}
\end{document}